\newtheorem{theorem}{Theorem}[section]
\newtheorem{lemma}[theorem]{Lemma}
\newtheorem{proposition}[theorem]{Proposition}
\newtheorem{corollary}[theorem]{Corollary}
\newtheorem{definition}[theorem]{Definition}
\newtheorem{assumption}{Assumption}
\theoremstyle{remark}
\newtheorem{remark}[theorem]{Remark}
\newcommand{\klr}[1]{\left(#1\right)}
\newcommand{\kle}[1]{\left[#1\right]}
\newcommand{\kls}[1]{\left|#1\right|}
\newcommand{\kla}[1]{\langle#1\rangle}
\newcommand{\klg}[1]{\left\{#1\right\}}
\newcommand{\klgcases}[1]{\begin{cases}#1\end{cases}}
\newcommand{\norm}[1]{\left \lVert #1 \right \rVert}
\newcommand{\svec}[1]{\begin{pmatrix}#1\end{pmatrix}}
\newcommand{\matr}[1]{\svec{#1}}
\newcommand\nat[0]{\mathbb{N}}
\newcommand{\citewithin}[2]{\protect{\cite[#1]{#2}}}
\setlist[enumerate]{label=\textnormal{(\roman*)}, font=\normalfont}
\def\eqref#1{\textnormal{(\textcolor{blue}{\ref{#1}})}}
\renewcommand{\cite}[2][]{\textnormal{\citep[#1]{#2}}}
\newcommand{\condref}[2]{\textnormal{(\textcolor{blue}{\hyperref[#1]{#2}})}}
\date{\today}
\begin{document}

\numberwithin{equation}{section}

\title[Gaussian-type density estimates for mixed SDEs driven by correlated fBMs]{Gaussian-type density estimates for mixed SDEs driven by
correlated fractional Brownian motions} 

\author{Maximilian Buthenhoff}\thanks{}
\address{Maximilian Buthenhoff, Ruhr University Bochum, Institute for Theoretical
Physics III, Bochum, Germany}
\email{maximilian.buthenhoff\@@{}rub.de}

\author{Ercan S\"onmez}
\address{Ercan S\"onmez, Ruhr University Bochum, Faculty of Mathematics, Bochum, Germany}
\email{ercan.soenmez\@@{}rub.de}

\begin{abstract}
In this work, we investigate the existence and properties of Gaussian-like densities for weak solutions of multidimensional stochastic differential equations driven by a mixture of completely correlated fractional Brownian motions. We consider both the short-range and long-range dependent regimes, imposing a singular drift in the short-range dependent case and a Hölder continuous drift in the long-range dependent setting. Our approach avoids the use of Malliavin calculus and stochastic dynamical systems, relying instead on the Girsanov theorem and the framework of exponential Orlicz spaces. By considering a conditionally Gaussian process, we establish the existence of a density with respect to the Lebesgue measure. Furthermore, we derive Gaussian-type upper and lower bounds for this density, illustrating the optimality of our results in the short-range dependent case.
\end{abstract}

\keywords{Fractional Brownian motion, Stochastic differential equations, Gaussian density estimates, Exponential Orlicz spaces, Girsanov theorem.}
\subjclass[2010]{Primary 60H10; Secondary 60E15, 60G22} 
\maketitle

\allowdisplaybreaks

\section{Introduction}
\indent The goal of this paper is to show the existence of Gaussian-type bounds for the density of solutions of the multidimensional mixed stochastic differential equation (SDE)
\begin{align}
    X_t = x_0 + \int_0^t b(s,X_s) \mathrm{d}s +  A\klr{a_1 B_t^{H_1} + a_2 B_t^{H_2}}\,, \qquad t \in [0,T] \,,
    \label{eq:sde}
\end{align}
where we assume that the measurable drift function $b \colon [0,T] \times \mathbb{R}^d \to \mathbb{R}^d$ is bounded. Throughout this work, we fix $d\in \mathbb{N}$, $T\in (0,\infty)$, we let $x_0 \in \mathbb{R}^d$ be some initial condition, we choose $a_1,a_2 \in \mathbb{R}\setminus\klg{0}$ and we denote by $A \in \mathbb{R}^{d\times d}$ an invertible ($d\times d$)-matrix. Furthermore, the processes $(B^{H_1}_t)_{t \in [0,T]}$, $(B^{H_2}_t)_{t \in [0,T]}$ denote two completely correlated $d$-dimensional fractional Brownian motions with different Hurst parameters $H_1, H_2 \in (0,1)$, i.e.\ the integral representations can be expressed with respect to the same $d$-dimensional Brownian motion $(W_t)_{t \in [0,T]}$ as
\begin{align}
    B_t^{H_1} = \int_0^t K_{H_1}(t,s) \mathrm{d}W_s\,, \qquad 
    B_t^{H_2} = \int_0^t K_{H_2}(t,s) \mathrm{d}W_s\,,
    \label{eq:fbms-wrt-brownian}
\end{align}
for every $t\in [0,T]$, where 
\begin{align}
    K_H(t,s) = \Gamma(H + 1/2)^{-1} (t-s)^{H-1/2} F(H - 1/2, 1/2 - H, H + 1/2, 1-t/s) \,
    \label{eq:covariance-kernel}
\end{align}
represents the Volterra kernel of the fractional Brownian motion (fBM), see \cite{decreusefond1999stochastic}. The fBM $B^H$ with Hurst index $H \in (0,1)$ is the unique normalized zero-mean Gaussian stochastic process that has stationary increments and is self-similar of order $H$, i.e.\ for every $a > 0$
\begin{align*}
    B_{at}^H \overset{d}{=} a^H B_t^H\,, \quad t \ge 0.
\end{align*}
It represents a natural generalization of the Brownian motion which corresponds to fBM with Hurst index $H = 1/2$. \\  
\indent SDEs driven by fractional Brownian motion are used to model systems with memory and long-range dependence. One prominent example is finance where having a smooth density for the asset price or volatility is important for option pricing as it ensures well-defined likelihoods and enables the use of calibration techniques on observed price distributions \cite{gatheral2018volatility}. Fractional Gaussian noises are also popular for modeling anomalous diffusion \cite{kou2004generalized}, where the mean squared displacement of a particle grows non-linearly in time. In such physical systems, estimations of the density of the particle’s displacement are essential to predict the probability of finding the particle in a given region. \\
\indent In 2014, Baudoin et al.\ \cite{baudoin2014upper} extended the works of \cite{cass} and obtained the first Gaussian-type upper bound for densities of solutions of SDEs driven by a fBM with Hurst parameter $H \in (1/3,1)$. This result has been then generalized by Besalú et al.\ \cite{besalu2016gaussian} for a wider range of Hurst indices and further extended by Baudoin et al.\ \cite{baudoin2016probability} where a Gaussian-type lower bound has been obtained. However, all of these works considered smooth diffusion and drift functions only. A very recent breakthrough is the work of Li et al.\ \cite{li2023non} where they established Gaussian upper and lower bounds under minimal regularity requirements on the drift for $H \le 1/2$ and Hölder regularity for $H > 1/2$. Notably, their approach does not use Malliavin calculus; instead, their approach relies on stochastic dynamical systems developed by Hairer \cite{hairer2005ergodicity}. In this paper we present an alternative approach which does neither rely on Malliavin calculus nor on stochastic dynamical systems and is solely based on the Girsanov theorem and exponential Orlicz spaces. Moreover, unlike \cite{li2023non} we use the Volterra integral representation of fractional Brownian motion instead of the Mandelbrot–Van Ness representation. In the case of independent noise, our methods remain applicable with a simpler implementation strategy; moreover, they extend naturally to the setting of a linear combination of arbitrarily finitely many independent fractional Brownian motions.\\
\indent The strategy of the proof is the following. Based on \cite{nualart2021regularization}, we establish that, in arbitrary dimensions, the SDE~\eqref{eq:sde} possesses a weak solution if the drift function satisfies a linear growth condition in $x$, uniformly in $t$, for $\min(H_1,H_2) \le 1/2$, and a Hölder condition for $\min(H_1,H_2) > 1/2$. Moreover, any two weak solutions have the same probability law. Following the idea of~\cite{fan2021moment,sonmez2020mixed}, we introduce a conditionally Gaussian process (CGP) to show that the law of solutions of SDE~\eqref{eq:sde} admits a density with respect to the Lebesgue measure. Finally, by constructing similar auxiliary stochastic processes as in \cite{li2023non}, by using the Girsanov theorem and exponential Orlicz spaces, we conclude the existence of Gaussian-type upper and lower bounds. It is noteworthy to mention that the CGP used here represents a stochastic process that approximates the solution of the parameter-dependent two-fractional noise SDE and possesses the useful property that it is conditionally Gaussian. The CGP enables us to show not only the existence of a density with respect to the Lebesgue measure for the solutions of the SDE \eqref{eq:sde}, but also serves as a necessary tool in a parallel project \cite{buthenhoff} which deals with the existence and uniqueness of strong solutions for a related SDE. \\
\indent This work is organized as follows. In Section \ref{sec:prel}, we recall the basics of fractional calculus, define fractional Brownian motion and summarize its most important properties. Moreover, we introduce the Girsanov theorem and state our main result. Next, in Section \ref{sec:2f-sde}, we continue with the proof of the weak existence and uniqueness of solutions for the parameter-dependent two-fractional noise SDE. After that, in Section \ref{sec:exist-dens-cgp}, we introduce the CGP and show that the law of the solution of the mixed SDE admits a density with respect to the Lebesgue measure. Lastly, in Section \ref{sec:gaussian-bounds}, we complete the proof of the main result. 
\section{Preliminaries and main result}  \label{sec:prel}
\subsection{Fractional Brownian motion}
Let us denote by $(\Omega, (\mathcal{A}_t)_{t\in [0,T]},\mathbb{P})$ a complete filtered probability space. For the sake of completeness, we recall the definition of fractional Brownian motion and summarize properties we need to use throughout this work. For a detailed review of fBM, its properties and its applications in finance, we refer to the work by Mandelbrot and Van Ness \cite{mandelbrot1968fractional} and the book by Mishura \cite{mishura2008stochastic}. The (one-sided, normalized) fractional Brownian motion with Hurst index $H \in (0,1)$ is a Gaussian process $B^H = (B_t^H)_{t\in [0,T]}$ on $(\Omega, (\mathcal{A}_t)_{t\in [0,T]},\mathbb{P})$ with
\begin{enumerate}
    \item[(i)] $B_0^H = 0$,
    \item[(ii)] $\mathbb{E}[B_t^H] = 0$ for all $t \ge 0$,
    \item[(iii)] $\mathbb{E}[B_t^H B_s^H] = \frac{1}{2}\klr{|t|^{2H} + |s|^{2H} - |t-s|^{2H}} \mathbbm{1}_{d\times d}\eqqcolon R_H(t,s)$ for all $s,t \ge 0$,
\end{enumerate} 
where $\mathbbm{1}_{d\times d}$ denotes the ($d\times d$)-identity matrix. These properties imply that fBM is self-similar of order $H$. Moreover, combining the self-similarity with Kolmogorov's continuity criterion (cf.\ for example~\cite{stroock1997multidimensional}) it turns out that the fractional Brownian motion with Hurst index $H$ admits a version whose paths are almost surely Hölder continuous of order strictly less than $H$. \\
\indent As already mentioned in the introduction, according to \cite{decreusefond1999stochastic}, the fBM represents a Volterra process and possesses the integral representation 
\begin{align}
    B_t^H = \int_0^t K_H(t,s) \mathrm{d}W_s \,,
    \label{eq:integral-repr-fbm}
\end{align}
where $(W_t)_{t \in [0,T]}$ denotes a $d$-dimensional Brownian motion. From now on we will assume that $(\mathcal{A}_t)_{t\in [0,T]}$ is the augmented filtration generated by $(W_t)_{t \in [0,T]}$ satisfying the usual conditions. We call $K_H(t,s)$ the covariance kernel and it is given by equation \eqref{eq:covariance-kernel}. The covariance kernel $K_H(t,s)$ satisfies the following bounds (see \cite{decreusefond1999stochastic, nualart2006malliavin}):
\begin{enumerate}
    \item[(i)] For $H \in (0,1)$ we have
    \begin{align}
        |K_H(t,s)| \le C_1(H) s^{-|H-1/2|} (t-s)^{\max\klg{-1/2+H,0}} \mathbbm{1}_{[0,t]}(s)\,.
        \label{eq:cov-bound-1}
    \end{align}
    \item[(ii)] For $H \in (1/2,1)$ we have
    \begin{align}
        |K_H(t,s)| \ge C_2(H) (t-s)^{H - 1/2} \mathbbm{1}_{[0,t]}(s)\,.
        \label{eq:cov-bound-2}
    \end{align}
    \item[(iii)] For $H \in (0,1/2)$ we have
    \begin{align}
        |K_H(t,s)| \ge C_3(H) t^{H-1/2} s^{1/2-H} (t-s)^{H - 1/2} \mathbbm{1}_{[0,t]}(s)\,.
        \label{eq:cov-bound-3}
    \end{align}
\end{enumerate}
Here, $C_1(H),C_2(H)$ and $C_3(H)$ represent some positive constants that only depend on $H$. The covariance kernel of the fBM induces the existence of a covariance operator on the space of square-integrable functions which is defined as the map $K_H \colon L^2([0,T]) \to L^2([0,T])$ with
\begin{align*}
    (K_H h)(t) = \int_0^t K_H(t,s) h(s) \mathrm{d}s \,.
\end{align*}
Since the covariance kernel admits a representation in terms of the hypergeometric function (cf.\ equation \eqref{eq:covariance-kernel}), according to \cite[\S 10]{samko1993aa} we have a representation in terms of fractional operators, in particular, in terms of the fractional Riemann-Liouville integrals. \\
\indent Let $f \in L^1([a,b])$ and $\alpha > 0$. The left fractional Riemann-Liouville integral of $f$ of order $\alpha$ on $(a,b)$ is given at almost all $x$ by
\begin{align*}
   I_{a+}^\alpha f(x) = \frac{1}{\Gamma(\alpha)} \int_a^x (x-y)^{\alpha - 1} f(y) \mathrm{d}y \,.
\end{align*}
In the following, we denote by $I_{a+}^\alpha (L^p)$ the image of $L^p([a,b])$ by the operator $I_{a+}^\alpha$, i.e.\ the set of functions $f$ that can be represented as $f = I_{a+}^\alpha \varphi$ for some $\varphi \in L^p([a,b])$. Moreover, we define the left fractional Riemann Liouville derivative of $f \colon [a,b] \to \mathbb{R}$ of order $\alpha \in [0,1)$ by
\begin{align*}
   D^\alpha_{a+} f(x) = \frac{\mathrm{d}}{\mathrm{d}x} I^{1-\alpha}_{a+} f(x)
\end{align*}
if the right-hand side exists. Using these definitions, in \cite[\S 10]{samko1993aa} it has been shown that the covariance operator $K_H$ is an isomorphism from $L^2([0,T])$ onto $I_{0+}^{H+1/2}(L^2)$ and can be expressed as
\begin{align*}
    (K_H h)(s) = \klgcases{
        I^{2H}_{0+} s^{1/2-H} I^{1/2 - H}_{0+} s^{H - 1/2} h & \text{if } H < 1/2\\
        I^1_{0+} s^{H - 1/2} I_{0+}^{H-1/2} s^{1/2-H} h & \text{if } H > 1/2 \,.
    }
\end{align*}
Moreover, from the last equation we deduce that the inverse operator $K_H^{-1}$ is given by
\begin{align}
    (K_H^{-1}h)(s) = \klgcases{
        s^{1/2-H} D_{0+}^{1/2-H} s^{H-1/2} D_{0+}^{2H}h & \text{if } H < 1/2\\
        s^{H-1/2} D_{0+}^{H-1/2} s^{1/2-H} h' & \text{if } H > 1/2 \,.
    } 
    \label{remark:inverse-operator-representation}
\end{align}
In particular, if $h$ is absolutely continuous, in \cite[\S 3.1]{nualart2002regularization} it has been shown for $H < 1/2$ that
\begin{align}
    (K_H^{-1}h)(s) = s^{H-1/2} I_{0+}^{1/2-H} s^{1/2-H} h' \,.
    \label{eq:formula-inverse-H-smaller-12}
\end{align}
\subsection{Girsanov theorem} As a consequence of the multidimensional Girsanov theorem \cite{girsanov1960transforming}, we have the following variant of the Girsanov theorem for two fBMs that is an extension of the one in~\cite{nualart2021regularization}, with modifications to the multidimensional setting.
\begin{theorem}[Girsanov theorem for two fBMs]\label{thm:girsanov-two-fbm}
    Let $B^{H_1}, B^{H_2}$ be two completely correlated fBMs with Hurst parameters $H_1$ and $H_2$ as above. Moreover, let $(u_t)_{t \in [0,T]}$ and $(v_t)_{t \in [0,T]}$ be $(\mathcal{A}_t)_{t\in [0,T]}$-adapted processes with integrable paths such that $\int_0^{\cdot} u_s \mathrm{d}s \in I_{0+}^{H_1+1/2}(L^2)$ and $\int_0^{\cdot} u_s \mathrm{d}s \in I_{0+}^{H_2+1/2}(L^2)$ almost surely. Suppose that
    \begin{align}
        \psi_t \coloneqq \klr{K_{H_1}^{-1}\int_0^{\cdot} u_r \mathrm{d}r}(t) = \klr{K_{H_2}^{-1}\int_0^{\cdot} v_r \mathrm{d}r}(t)
        \label{eq:process-psi-two-fbms}
    \end{align}
    for every $t \in (0,T]$. If
    \begin{align*}
        L_T \coloneqq \exp\kle{\int_0^T \kla{\psi_s, \mathrm{d}W_s} - \frac{1}{2} \int_0^T |\psi_s|^2 \mathrm{d}s}
    \end{align*}
    satisfies $\mathbb{E}[L_T] = 1$, then, under the probability measure $\mathbb{Q}$ defined by $\mathrm{d}\mathbb{Q}/\mathrm{d}\mathbb{P} = L_T$, the processes $(\tilde{B}_t^{H_1})_{t \in [0,T]}$, $(\tilde{B}_t^{H_2})_{t \in [0,T]}$ with
    \begin{align*}
        \tilde{B}_t^{H_1} = B_t^{H_1} - \int_0^t u_s \mathrm{d}s\,, \qquad \tilde{B}_t^{H_2} = B_t^{H_2} - \int_0^t v_s \mathrm{d}s
    \end{align*}
    are completely correlated fractional Brownian motions. This means that there exists a $d$-dimensional Brownian motion $(\tilde{W}_t)_{t\in[0,T]}$ such that for all $t \in [0,T]$ we have
    \begin{align*}
        \tilde{B}_t^{H_1} = \int_0^t K_{H_1}(t,s) \mathrm{d}\tilde{W}_s\,, \qquad \tilde{B}_t^{H_2} = \int_0^t K_{H_2}(t,s) \mathrm{d}\tilde{W}_s\,.
    \end{align*}
\end{theorem}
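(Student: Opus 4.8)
The plan is to reduce the two-fBM statement to the classical multidimensional Girsanov theorem applied to the underlying Brownian motion $(W_t)_{t\in[0,T]}$, and then to transfer the conclusion back through the Volterra kernels. First I would observe that the compatibility condition \eqref{eq:process-psi-two-fbms} is precisely what makes the construction consistent: the single process $(\psi_t)_{t\in[0,T]}$ simultaneously encodes the drift shift $\int_0^\cdot u_s\,\mathrm{d}s$ at ``level $H_1$'' and the shift $\int_0^\cdot v_s\,\mathrm{d}s$ at ``level $H_2$''. Concretely, applying $K_{H_i}$ to both sides of \eqref{eq:process-psi-two-fbms} and using that $K_{H_i}$ is the isomorphism $L^2([0,T])\to I_{0+}^{H_i+1/2}(L^2)$ recalled above, one gets
\begin{align*}
    \int_0^t u_s\,\mathrm{d}s = \klr{K_{H_1}\psi}(t) = \int_0^t K_{H_1}(t,s)\,\psi_s\,\mathrm{d}s, \qquad
    \int_0^t v_s\,\mathrm{d}s = \klr{K_{H_2}\psi}(t) = \int_0^t K_{H_2}(t,s)\,\psi_s\,\mathrm{d}s,
\end{align*}
for every $t\in[0,T]$ (the hypotheses $\int_0^\cdot u_s\,\mathrm{d}s\in I_{0+}^{H_1+1/2}(L^2)$ and the analogous one at level $H_2$ guarantee $\psi$ is well defined and in $L^2$, componentwise). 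This is the key structural identity that the rest of the argument rests on.

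Next I would invoke the classical Girsanov theorem of \cite{girsanov1960transforming} in its $d$-dimensional form: since $(\psi_t)_{t\in[0,T]}$ is $(\mathcal{A}_t)$-adapted (being built from the adapted processes $u,v$ via the causal operators $K_{H_i}^{-1}$) and $\mathbb{E}[L_T]=1$ by assumption, the process
\begin{align*}
    \tilde W_t \coloneqq W_t - \int_0^t \psi_s\,\mathrm{d}s, \qquad t\in[0,T],
\end{align*}
is a $d$-dimensional Brownian motion under $\mathbb{Q}$, where $\mathrm{d}\mathbb{Q}/\mathrm{d}\mathbb{P}=L_T$. It then remains to check that this $\tilde W$ is exactly the one that represents $\tilde B^{H_1}$ and $\tilde B^{H_2}$. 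For this I would compute, using \eqref{eq:integral-repr-fbm} and the displayed identity above,
\begin{align*}
    \int_0^t K_{H_1}(t,s)\,\mathrm{d}\tilde W_s
    = \int_0^t K_{H_1}(t,s)\,\mathrm{d}W_s - \int_0^t K_{H_1}(t,s)\,\psi_s\,\mathrm{d}s
    = B_t^{H_1} - \int_0^t u_s\,\mathrm{d}s = \tilde B_t^{H_1},
\end{align*}
and identically with $H_2$, $v$, $\tilde B^{H_2}$. Since $\tilde W$ is a $\mathbb{Q}$-Brownian motion and the Volterra kernels are deterministic, the processes $\tilde B^{H_1},\tilde B^{H_2}$ are, under $\mathbb{Q}$, completely correlated fBMs with the respective Hurst parameters, which is the claim.

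The main obstacle — and the only place requiring genuine care — is the justification of the stochastic Fubini / pathwise manipulation $\int_0^t K_{H_1}(t,s)\,\mathrm{d}\tilde W_s = \int_0^t K_{H_1}(t,s)\,\mathrm{d}W_s - \int_0^t K_{H_1}(t,s)\,\psi_s\,\mathrm{d}s$ and, upstream of it, the well-posedness of $\psi$ together with $\int_0^T|\psi_s|^2\,\mathrm{d}s<\infty$ $\mathbb{P}$-a.s. (needed for $L_T$ to make sense and for the Itô integrals to be defined). Here I would use the kernel bound \eqref{eq:cov-bound-1}, namely $|K_H(t,s)|\le C_1(H)\,s^{-|H-1/2|}(t-s)^{\max\{-1/2+H,0\}}\mathbbm{1}_{[0,t]}(s)$, to secure the requisite square-integrability of $s\mapsto K_{H_i}(t,s)$ on $[0,t]$ and of the product $K_{H_i}(t,s)\psi_s$, and the fact that $K_{H_i}^{-1}$ maps $I_{0+}^{H_i+1/2}(L^2)$ back into $L^2$ to get $\psi\in L^2([0,T])$ a.s.; the interchange of the (deterministic-kernel) Wiener integral with the absolutely continuous drift correction is then routine. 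One should also note that the adaptedness of $\psi$ follows because $K_{H_i}^{-1}$ is expressed through the left Riemann–Liouville operators in \eqref{remark:inverse-operator-representation}, which are causal, so $\psi_t$ depends only on $\{u_r : r\le t\}$ (equivalently $\{v_r:r\le t\}$); this is what legitimizes applying the classical Girsanov theorem. The statement then follows by assembling these pieces.
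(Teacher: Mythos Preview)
Your proposal is correct and follows exactly the route the paper indicates: the paper does not spell out a proof but simply presents the theorem as a consequence of the classical multidimensional Girsanov theorem applied to the underlying Brownian motion $W$, with the Volterra representation transferring the conclusion back to the fBMs. Your argument --- defining $\tilde W_t = W_t - \int_0^t \psi_s\,\mathrm{d}s$, applying classical Girsanov, and then verifying $\int_0^t K_{H_i}(t,s)\,\mathrm{d}\tilde W_s = \tilde B_t^{H_i}$ via the identity $(K_{H_i}\psi)(t) = \int_0^t u_s\,\mathrm{d}s$ (resp.\ $v_s$) --- is precisely this reduction, and your remarks on adaptedness of $\psi$ and the integrability issues are the appropriate technical checks that the paper leaves implicit.
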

Girsanov theorems are useful to construct probability measures with suitable properties. Note that if the process $(\psi_t)_{t\in [0,T]}$ defined in equation \eqref{eq:process-psi-two-fbms} fulfills the Novikov condition
\begin{align}
    \mathbb{E}\kle{\exp\klr{\frac{1}{2} \int_0^T |\psi_s|^2 \mathrm{d}s}} < \infty\,,
    \label{eq:novikov-condition}
\end{align}
then we have automatically $\mathbb{E}[L_T] = 1$.
\subsection{Main result}
After having introduced the necessary framework, we are now in position to state our main result. We impose the following assumption:
\begin{assumption}\label{assumpt:main-thm}
    Let $a_1,a_2 \in \mathbb{R}\setminus\klg{0}$ and assume that $A \in \mathbb{R}^{d\times d}$ is an invertible ($d\times d$)-matrix. The drift function $b$ is bounded in $x$ uniformly in $t$. Moreover, if $H \coloneqq \min(H_1,H_2) > 1/2$, additionally assume that the function $b$ is Hölder continuous, i.e., there exists a $c \in (0,\infty)$ such that for all $(t, x), (s, y) \in [0,T] \times \mathbb{R}^d$ we have
    \begin{align*}
        |b(t,x) - b(s,y)| \le c(|x-y|^\beta + |s-t|^\gamma)\,,
    \end{align*}
    where $\beta \in (1-1/(2H), 1)$ and $\gamma > H-1/2$.
\end{assumption}
\begin{theorem}\label{thm:main-thm}
    Assume that the drift in SDE \eqref{eq:sde} satisfies Assumption \ref{assumpt:main-thm} and let
    \begin{align*}
        \sigma^2(T_0) = \int_0^{T_0} \klr{ a_1 K_{H_1}(T_0,t) + a_2 K_{H_2}(T_0,t)}^2 \mathrm{d}t, \quad T_0 \in (0,T]  \,.
    \end{align*}
    For each $T_0 \in (0,T]$ the law of the solution $X_{T_0}$ of the SDE \eqref{eq:sde} admits a Lebesgue density $p_{T_0}$ and there exist $C_1,C_1' = \mathrm{const}(H_1,H_2,A,a_1,a_2,d,T_0) > 0$, $C_2,C_2' = \mathrm{const}(H_1,H_2,A,a_1,a_2,d) > 0$, such that this density admits the following Gaussian bounds for all $x \in \mathbb{R}^d$:
    \begin{align*}
        \frac{C_1'}{\sigma^d(T_0)} \exp\klr{-\frac{C'_2|x-x_0|^2}{\sigma^2(T_0)}}\, \leq p_{T_0}(x) &\le \frac{C_1}{{\sigma^d(T_0)}} \exp\klr{-\frac{C_2|x-x_0|^2}{\sigma^2(T_0)}}\,.
    \end{align*} 
    Furthermore, if $\min(H_1,H_2) \leq 1/2$ the constants $C_1,C_1'$ are independent of $T_0$ as well.
\end{theorem}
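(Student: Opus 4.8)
The overall plan follows the three stages sketched in the introduction. First, by the regularization-by-noise arguments of Section~\ref{sec:2f-sde} (based on \cite{nualart2021regularization}), SDE~\eqref{eq:sde} admits a weak solution that is unique in law, so for each \(T_0\in(0,T]\) the random variable \(X_{T_0}\) is well defined and its law depends only on the data. For the existence of a Lebesgue density I would introduce the conditionally Gaussian process obtained by freezing the drift along a partition of \([0,T_0]\): conditioning on the Brownian filtration up to the last partition point strictly before \(T_0\) leaves the increment of \(A(a_1B^{H_1}+a_2B^{H_2})\) over the terminal subinterval Gaussian, independent, and non-degenerate (its covariance is a positive multiple of \(AA^\top\), positivity following from \(K_{H_i}(T_0,s)\sim\Gamma(H_i+1/2)^{-1}(T_0-s)^{H_i-1/2}\) as \(s\to T_0\) together with \(H_1\ne H_2\)), so the approximating terminal value is conditionally Gaussian and its law is an absolutely continuous mixture of non-degenerate Gaussians; letting the mesh tend to zero and invoking uniqueness in law transfers absolute continuity to \(X_{T_0}\). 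In fact the Girsanov change of measure constructed below already produces a probability measure \(\mathbb{Q}\sim\mathbb{P}\) under which \(X_{T_0}\) is exactly Gaussian, which gives \(\mathbb{P}\circ X_{T_0}^{-1}\ll\mathrm{Leb}\) directly.

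Next comes the drift removal. Fix \(T_0\) and set \(\bar K(t,s):=a_1K_{H_1}(t,s)+a_2K_{H_2}(t,s)\), so that \(A(a_1B^{H_1}_t+a_2B^{H_2}_t)=A\int_0^t\bar K(t,s)\,\mathrm{d}W_s\). I would apply Theorem~\ref{thm:girsanov-two-fbm} with \(u,v\) chosen so that \(\int_0^\cdot u=K_{H_1}\psi\) and \(\int_0^\cdot v=K_{H_2}\psi\); the drift is then cancelled exactly provided \(\psi\in L^2([0,T_0])\) solves \((a_1K_{H_1}+a_2K_{H_2})\psi=-A^{-1}\int_0^\cdot b(s,X_s)\,\mathrm{d}s\). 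Assuming without loss of generality \(H_1<H_2\), the operator \(K_{H_1}^{-1}K_{H_2}\) is, up to the explicit power weights in \eqref{remark:inverse-operator-representation}, a fractional integral of strictly positive order \(H_2-H_1\), hence a quasi-nilpotent Volterra operator on \(L^2([0,T_0])\); therefore \(I+\tfrac{a_2}{a_1}K_{H_1}^{-1}K_{H_2}\) is boundedly invertible by a Neumann series, and \(\psi\) is obtained by applying its inverse to \(\tfrac1{a_1}K_{H_1}^{-1}\bigl(-A^{-1}\int_0^\cdot b(s,X_s)\,\mathrm{d}s\bigr)\). The decisive point is a \emph{deterministic} bound \(\int_0^{T_0}|\psi_s|^2\,\mathrm{d}s\le C_0\): since \(t\mapsto\int_0^t b(s,X_s)\,\mathrm{d}s\) is Lipschitz with constant \(\|b\|_\infty\) (and, when \(H:=\min(H_1,H_2)>1/2\), Hölder of order strictly larger than \(H-1/2\) once the a.s.\ Hölder regularity of \(X\) is fed in — which is exactly what the exponents \(\beta,\gamma\) of Assumption~\ref{assumpt:main-thm} are tuned for), the representation \eqref{eq:formula-inverse-H-smaller-12} (and its \(H>1/2\) counterpart in \eqref{remark:inverse-operator-representation}) yields \(|(K_H^{-1}\int_0^\cdot b)(s)|\le C\|b\|_\infty s^{1/2-H}\), which is square-integrable on \([0,T_0]\); combining this with the Neumann bound gives \(C_0=C_0(\|b\|_\infty,H_1,H_2,A,a_1,a_2,d)\), independent of \(T_0\le T\) when \(H\le1/2\) and \(T_0\)-dependent (through moments of the Hölder norm of \(X\) on \([0,T_0]\)) when \(H>1/2\). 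In particular Novikov's condition \eqref{eq:novikov-condition} holds deterministically, so \(\mathbb{E}[L_T]=1\), \(\mathbb{Q}\sim\mathbb{P}\), and under \(\mathbb{Q}\) one has \(X_{T_0}=x_0+A\int_0^{T_0}\bar K(T_0,s)\,\mathrm{d}\tilde W_s\sim\mathcal N(x_0,\sigma^2(T_0)\,AA^\top)\) with \(\sigma^2(T_0)>0\) (again by \(H_1\ne H_2\)); since \(A\) is invertible this Gaussian has a density \(q_{T_0}\) obeying two-sided bounds of the form \(c\,\sigma^{-d}(T_0)\exp(-C|x-x_0|^2/\sigma^2(T_0))\le q_{T_0}(x)\le C\,\sigma^{-d}(T_0)\exp(-c|x-x_0|^2/\sigma^2(T_0))\).

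The Gaussian bounds for \(p_{T_0}\) then follow by disintegrating \(\mathrm{d}\mathbb{P}=L_T^{-1}\,\mathrm{d}\mathbb{Q}\) along \(X_{T_0}\), which gives \(p_{T_0}(x)=\mathbb{E}_{\mathbb{Q}}[\,L_T^{-1}\mid X_{T_0}=x\,]\,q_{T_0}(x)\) for a.e.\ \(x\). To control the conditional expectation I would, in the spirit of the auxiliary processes of \cite{li2023non}, describe the conditional law of the \(\mathbb{Q}\)-Brownian motion \(\tilde W\) given \(X_{T_0}=x\): it is the law of \(\tilde W\) shifted by the Cameron–Martin element \(t\mapsto\sigma^{-2}(T_0)\,[A^{-1}(x-x_0)]\int_0^{t\wedge T_0}\bar K(T_0,s)\,\mathrm{d}s\) (of Cameron–Martin norm \(|A^{-1}(x-x_0)|/\sigma(T_0)\)) and then conditioned to a Gaussian bridge. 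Writing \(L_T^{\pm1}=\exp\bigl(\pm\int_0^{T_0}\langle\psi_s,\mathrm{d}\tilde W_s\rangle\pm\tfrac12\int_0^{T_0}|\psi_s|^2\,\mathrm{d}s\bigr)\) under \(\mathbb{Q}\), using \(\int_0^{T_0}|\psi_s|^2\,\mathrm{d}s\le C_0\), and noting that the Cameron–Martin shift changes the stochastic integral by at most \(\sigma^{-2}(T_0)\,|\int_0^{T_0}\langle\psi_s,\bar K(T_0,s)\rangle\mathrm{d}s|\,|A^{-1}(x-x_0)|\le\sqrt{C_0}\,|A^{-1}(x-x_0)|/\sigma(T_0)\) while the bridge part of the stochastic integral stays sub-Gaussian with variance proxy at most \(C_0\), the exponential Orlicz (sub-Gaussian) estimate yields constants \(C=C(C_0,\dots)\) with \(C^{-1}e^{-c|x-x_0|/\sigma(T_0)}\le\mathbb{E}_{\mathbb{Q}}[\,L_T^{-1}\mid X_{T_0}=x\,]\le Ce^{c|x-x_0|/\sigma(T_0)}\), the lower bound via \(\mathbb{E}[L_T^{-1}\mid\cdot]\ge(\mathbb{E}[L_T\mid\cdot])^{-1}\) and the same estimate for \(L_T\). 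Multiplying by \(q_{T_0}\) and absorbing the linear exponent into the quadratic one via Young's inequality \(c u\le\tfrac{C}{2}u^2+\tfrac{c^2}{2C}\), \(u=|x-x_0|/\sigma(T_0)\), produces exactly the claimed two-sided Gaussian estimate with prefactor \(\sigma^{-d}(T_0)\); here the exponents' constants \(C_2,C_2'\) depend only on \(C_0\) and the data, not on \(T_0\), whereas \(C_1,C_1'\) carry \(T_0\)-dependence only through \(C_0\), which disappears when \(H\le1/2\) — matching the final assertion.

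I expect the main obstacle to be the Girsanov step, and within it the \emph{deterministic} \(L^2\)-bound on \(\psi\): one must invert \(a_1K_{H_1}+a_2K_{H_2}\) on the appropriate space, control the interaction of the singular power weights \(s^{\pm(H-1/2)}\) with the fractional integrals and derivatives appearing in \eqref{remark:inverse-operator-representation}, handle the regimes \(H<1/2\) and \(H>1/2\) separately, and (for \(H>1/2\)) reconcile the Hölder exponent larger than \(H-1/2\) needed to apply \(K_H^{-1}\) with the a.s.\ Hölder regularity of the solution \(X\) — this is precisely why Assumption~\ref{assumpt:main-thm} takes the form it does. A secondary technical point is the conditioning in the last step: as \(\psi\) is merely adapted, one ends up integrating it against a Gaussian \emph{bridge} rather than a martingale, so the sub-Gaussian tail bound must be set up with care, e.g.\ by realizing the bridge law as a further Girsanov transform of \(\mathbb{Q}\) and invoking Hölder's inequality in exponential Orlicz spaces while keeping the deterministic control of \(\int_0^{T_0}|\psi_s|^2\,\mathrm{d}s\) throughout.
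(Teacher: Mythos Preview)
Your overall architecture is the paper's: Girsanov removal of the drift to obtain the representation \(p_{T_0}(x)=q_{T_0}(x)\,\mathbb{E}_{\mathbb{Q}}[L_T^{-1}\mid X_{T_0}=x]\) with \(q_{T_0}\) the Gaussian density, followed by a description of the conditional law of the driving Brownian motion given \(a_1B^{H_1}_{T_0}+a_2B^{H_2}_{T_0}=A^{-1}(x-x_0)\) as a (time-changed) Brownian bridge, and then exponential Orlicz estimates to turn the linear exponent \(|x-x_0|/\sigma(T_0)\) into the quadratic Gaussian one. Your Cameron--Martin shift plus bridge is exactly the content of the paper's auxiliary SDE (their Lemma~\ref{lemma:sde-yt} and Corollary~\ref{cor:expression-yt}), and your Jensen trick for the lower bound matches the paper's Step~2.

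There is, however, a genuine gap in the case \(H=\min(H_1,H_2)>1/2\). You repeatedly invoke a \emph{deterministic} bound \(\int_0^{T_0}|\psi_s|^2\,\mathrm{d}s\le C_0\), and your sub-Gaussian argument for the bridge integral (``variance proxy at most \(C_0\)'') as well as your deterministic verification of Novikov rely on it. But for \(H>1/2\) no such deterministic bound is available: applying \(K_H^{-1}\) to \(\int_0^\cdot b(s,X_s)\,\mathrm{d}s\) requires the H\"older seminorm of \(t\mapsto b(t,X_t)\), which in turn involves the \emph{random} H\"older norms of \(B^{H_1},B^{H_2}\). The paper's Proposition~\ref{prop:helper-weak-existence}(A3) makes this explicit: for \(H>1/2\) one only gets \(|\psi_t|\le C\bigl(|h(0)|+\|h\|_\gamma T^\gamma+G\bigr)\,t^{1/2-H}\) with \(G\) a random variable having all exponential square moments. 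Your sentence ``\(T_0\)-dependent (through moments of the H\"older norm of \(X\))'' conflates a moment bound (a number) with a pathwise bound (what your sub-Gaussian argument needs); these are not interchangeable here.

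The paper circumvents this by never asserting a deterministic \(L^2\) bound on \(\psi\). Instead it keeps the pointwise estimate \(|\psi_t|\lesssim (\text{random prefactor})\cdot t^{1/2-H}\), writes the bridge contribution explicitly as \(G_t=f(T_0,t)\int_0^t f(T_0,u)\big(\int_u^{T_0}f^2\big)^{-1}\mathrm{d}W_u\), and shows that \(\int_0^{T_0} t^{1/2-H}|G_t|\,\mathrm{d}t\) lies in the exponential Orlicz space \(L_E\) with an explicit Luxemburg norm bound (their Lemmas~\ref{lemma:gt-orlicz}--\ref{lemma:integralt-orlicz}). The random prefactor, having Gaussian tails by Fernique, can then be separated by H\"older/Cauchy--Schwarz inside the expectation. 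This is precisely the missing ingredient in your sketch: for \(H>1/2\) you must track the randomness of the \(\psi\)-bound through the exponential estimates rather than freeze it as a constant, and this is also the source of the \(T_0\)-dependence of \(C_1,C_1'\) in that regime. For \(H\le 1/2\) your deterministic \(C_0\) is correct and your argument goes through essentially as written.
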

The exact definitions of the constants $C_1,C_1', C_2,C_2'$ are deferred to Section \ref{sec:main-result-proof}. In the proofs we will see that their behavior strongly depends on whether the Hurst indices lie in the short- or long-range dependent regime. Moreover, it follows from our analysis that the bounds in Theorem~\ref{thm:main-thm} are optimal in the case $\min(H_1,H_2) \leq 1/2$, where in particular the constants $C_1, C_1'$ are independent of $T_0$. If $\min(H_1,H_2) > 1/2$, the behavior of $C_1, C_1'$ in $T_0$ will be made explicit in the proofs. This behavior aligns with similar observations as in \cite{baudoin2014upper}.

%
%
\section{Weak existence and uniqueness of solutions}\label{sec:2f-sde}
In this section, for the sake of completeness, we give a proof for the weak existence and uniqueness of solutions of SDE \eqref{eq:sde} under the following assumption on the drift function $b \colon [0,T] \times \mathbb{R}^d \to \mathbb{R}^d$. The techniques used in proving these results will be essential for the proof of our main result, as they provide the foundational tools required for the derivation of the density bounds established in Theorem~\ref{thm:main-thm}.
\begin{assumption}\label{assumpt:mixed-sde-solution}
    Let $a_1,a_2 \in \mathbb{R}\setminus\klg{0}$, assume that $A \in \mathbb{R}^{d\times d}$ is an invertible ($d\times d$)-matrix and define $H \coloneqq \min(H_1,H_2)$. Let one of the following two conditions be fulfilled:
    \begin{enumerate}
        \item $H \le 1/2$ and the function $b$ satisfies a linear growth condition in $x$ uniformly in $t$, i.e., we assume that there exists a $c \in (0,\infty)$ such that for all $(t,x) \in [0,T] \times \mathbb{R}^d$ we have
        \begin{align}
            |b(t,x)| \le c(1 + |x|) \,.
            \label{eq:linear-growth-condition}
        \end{align}
        \item $H > 1/2$ and the function $b$ satisfies a Hölder condition, i.e., there exists a $c \in (0,\infty)$ such that for all $(t, x), (s, y) \in [0,T] \times \mathbb{R}^d$ we have
        \begin{align}
            |b(t,x) - b(s,y)| \le c(|x-y|^\beta + |s-t|^\gamma)\,,
            \label{eq:holder-condition}
        \end{align}
        where $\beta \in (1-1/(2H),1)$ and $\gamma > H-1/2$.
    \end{enumerate}
\end{assumption}
%

Note that Assumption \ref{assumpt:main-thm} implies Assumption \ref{assumpt:mixed-sde-solution}. In our proofs we will rely on Assumption \ref{assumpt:main-thm}, for the well-posedness of the equation we work with the more general Assumption \ref{assumpt:mixed-sde-solution}.

Recall the following definition. 
\begin{definition}[Weak solution]
    A weak solution to the stochastic differential equation \eqref{eq:sde} is defined as a triple $(B^{H_1}, B^{H_2}, X)$ on a filtered probability space $(\Omega, (\mathcal{A}_t)_{t \in [0,T]}, \mathbb{P})$ such that the following conditions hold
    \begin{enumerate}
        \item There exists an $(\mathcal{A}_t)_{t \in [0,T]}$-Brownian motion $W = (W_t)_{t \in [0,T]}$ such that the integral representations of the fBMs satisfy equation \eqref{eq:fbms-wrt-brownian}.
        \item The triple $(B^{H_1}, B^{H_2}, X)$ satisfies the stochastic differential equation \eqref{eq:sde} and
        \begin{align*}
            \int_0^T |b(s,X_s)| \mathrm{d}s < \infty
        \end{align*}
        almost surely. 
    \end{enumerate}
\end{definition}
%
%
\subsection{Existence of weak solution}
The strategy we follow is based on the works \cite{nualart2002regularization,nualart2021regularization,nualart2022regularization}.
\begin{proposition}[cf.\ \citewithin{Theorem 3.2}{nualart2021regularization}]\label{prop:helper-weak-existence}
    Define $H \coloneqq \min(H_1,H_2)$ and let $h \colon [0,T] \to \mathbb{R}^d$ be a measurable function that satisfies one of the following conditions:
    \begin{enumerate}
        \item $H \le 1/2$ and the function $h$ satisfies $\norm{h}_\infty < \infty$.
        \item $H > 1/2$ and the function $h$ is $\gamma$-Hölder continuous, i.e., there exists a $c \in (0,\infty)$ such that for all $s,t \in [0,T]$ we have
        \begin{align}
            |h(t) - h(s)| \le c |t-s|^\gamma\,,
            \label{eq:h-holder}
        \end{align}
        where $\gamma > H - 1/2$. 
    \end{enumerate}
    Then, there exist $(\mathcal{A}_t)_{t \in [0,T]}$-adapted processes $(u_t)_{t \in [0,T]}$, $(v_t)_{t\in [0,T]}$ with integrable paths such that
    \begin{align*}
        \int_0^{\cdot} u_s \mathrm{d}s \in I_{0+}^{H_1+1/2}(L^2)\,, \qquad 
        \int_0^{\cdot} v_s \mathrm{d}s \in I_{0+}^{H_2 + 1/2}(L^2)
    \end{align*}
    almost surely and the following conditions are satisfied:
    \begin{enumerate}
        \item[(A1)] \label{cond:A1}For all $t \in [0,T]$ we have
        \begin{align*}
            A (a_1u_t + a_2v_t )= h(t) \,.
        \end{align*}
        \item[(A2)] \label{cond:A2} For all $t \in [0,T]$ we have
        \begin{align*}
             \psi_t \coloneqq \klr{K_{H_1}^{-1}\int_0^{\cdot} u_r \mathrm{d}r}(t) = \klr{K_{H_2}^{-1}\int_0^{\cdot} v_r \mathrm{d}r}(t)
        \end{align*}
        and the process $(\psi_t)_{t \in [0,T]}$ satisfies the Novikov condition \eqref{eq:novikov-condition}.
        \item[(A3)] \label{cond:A3} For all $t \in [0,T]$ we have
        \begin{align*}
            |\psi_t| \le C(H_1,H_2,\norm{A},T) |a|^{-1} \klgcases{
                \norm{h}_\infty t^{1/2-H} & \text{for } H \le 1/2\\
                \klr{|h(0)| + \norm{h}_\gamma T^\gamma + G} t^{1/2 - H} & \text{for } H > 1/2
            } \,,
        \end{align*}
        where $C(H_1,H_2,\norm{A},T)$ is some $H_1$, $H_2$, $\norm{A}$ and $T$ dependent constant and $G$ is a random variable that satisfies
        \begin{align*}
            \mathbb{E}[\exp(\lambda G^2)] < \infty
        \end{align*}
        for all $\lambda > 0$. Moreover, we set $a = a_1$ if $H_1 \le H_2$ and $a = a_2$ otherwise. Here, $\norm{A} = \max_{|y| =1 }|Ay|$ denotes the usual matrix norm. Note that $t^{1/2-H} \le T^{1/2-H}$ for $H \le 1/2$.
    \end{enumerate}
\end{proposition}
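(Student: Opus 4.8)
After possibly interchanging the roles of $(H_1,a_1,u)$ and $(H_2,a_2,v)$ we may assume $H_1<H_2$, so that $H=H_1$ and $a=a_1$; set $g(t):=A^{-1}\int_0^t h(r)\,\mathrm{d}r$. The plan is to reduce the whole construction to solving a single scalar Volterra equation for $\psi$ and then to read off (A1)--(A3) from quantitative bounds on its solution. Concretely, it suffices to produce an $(\mathcal{A}_t)$-adapted process $\psi$ with $\int_0^T|\psi_s|^2\,\mathrm{d}s<\infty$ almost surely such that
\begin{align*}
  a_1\,(K_{H_1}\psi)(t)+a_2\,(K_{H_2}\psi)(t)=g(t),\qquad t\in[0,T],
\end{align*}
and then to put $\int_0^{\cdot}u_s\,\mathrm{d}s:=K_{H_1}\psi$ and $\int_0^{\cdot}v_s\,\mathrm{d}s:=K_{H_2}\psi$. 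Indeed, since $K_{H_i}$ is an isomorphism of $L^2([0,T])$ onto $I_{0+}^{H_i+1/2}(L^2)$, this places $\int_0^{\cdot}u$ and $\int_0^{\cdot}v$ in the required image spaces and makes $\psi=(K_{H_1}^{-1}\int_0^{\cdot}u)(t)=(K_{H_2}^{-1}\int_0^{\cdot}v)(t)$, which is the identity in (A2); differentiating the displayed equation gives $A(a_1u_t+a_2v_t)=h(t)$, i.e.\ (A1); and integrability of the paths of $u,v$ and their measurability are inherited from $\psi$ via the weighted bound obtained below (one checks, using \eqref{eq:cov-bound-1} and that bound, that $K_{H_i}\psi$ is absolutely continuous with an integrable derivative).

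\emph{Solving for $\psi$.} Applying $K_{H_1}^{-1}$ turns the first-kind Volterra equation above into the second-kind fixed-point equation
\begin{align*}
  \psi=\frac{1}{a_1}\,K_{H_1}^{-1}g-\frac{a_2}{a_1}\,R\psi,\qquad R:=K_{H_1}^{-1}K_{H_2}.
\end{align*}
Using the representations \eqref{remark:inverse-operator-representation} and \eqref{eq:formula-inverse-H-smaller-12} of $K_H^{\pm1}$ through fractional Riemann--Liouville integrals/derivatives and multiplications by powers of $s$, one identifies $R$ as a Volterra operator that regularizes by the \emph{positive} order $H_2-H_1$ (morally $I_{0+}^{H_2-H_1}$ conjugated by bounded multiplications $s^{\pm(H_i-1/2)}$). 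Its iterated kernels then decay super-exponentially, so the Neumann series $\psi=a_1^{-1}\sum_{n\ge0}(-a_2/a_1)^n R^n(K_{H_1}^{-1}g)$ converges and solves the equation, and carrying the weight $s^{H-1/2}$ through the estimates yields $|\psi_t|\le C(H_1,H_2,\norm{A},T)\,|a|^{-1}\,\Theta\,t^{1/2-H}$, where $\Theta:=\sup_{s\in(0,T]}s^{H-1/2}|(K_{H_1}^{-1}g)(s)|$.

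\emph{The estimate (A3) and Novikov.} It remains to estimate $K_{H_1}^{-1}g$ pointwise. For $H\le1/2$, formula \eqref{eq:formula-inverse-H-smaller-12} gives $(K_H^{-1}g)(s)=s^{H-1/2}I_{0+}^{1/2-H}(r^{1/2-H}A^{-1}h(r))(s)$, and since $|A^{-1}h|\le\norm{A^{-1}}\norm{h}_\infty$ a Beta-function computation yields $|(K_H^{-1}g)(s)|\le C\,\norm{h}_\infty\,s^{1/2-H}$, i.e.\ $\Theta\lesssim\norm{h}_\infty$. For $H>1/2$ the analogous formula from \eqref{remark:inverse-operator-representation} involves $D_{0+}^{H-1/2}$; the hypothesis $\gamma>H-1/2$ is exactly what makes $D_{0+}^{H-1/2}$ of the $\gamma$-Hölder integrand finite, and its Marchaud representation yields $|(K_H^{-1}g)(s)|\le C(|h(0)|+\norm{h}_\gamma T^\gamma)s^{1/2-H}$. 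Substituting into the previous bound gives (A3); when the result is later applied with the random drift $h=b(\cdot,X_{\cdot})$, the seminorm $\norm{h}_\gamma$ is a random variable with Gaussian-type tails (via $\beta$-Hölder continuity of $b$ in space and Gaussian concentration for the Hölder norm of $X$), which is absorbed into $G$ with $\mathbb{E}[\exp(\lambda G^2)]<\infty$ for all $\lambda>0$. Finally (A3) yields $\int_0^T|\psi_s|^2\,\mathrm{d}s\le C(H_1,H_2,\norm{A},T)\,\Xi^2\int_0^T s^{1-2H}\,\mathrm{d}s$, with $\Xi=\norm{h}_\infty$ if $H\le1/2$ and $\Xi=|h(0)|+\norm{h}_\gamma T^\gamma+G$ if $H>1/2$, the integral being finite since $H<1$: for $H\le1/2$ this is a deterministic constant so \eqref{eq:novikov-condition} is immediate, and for $H>1/2$ one gets $\mathbb{E}[\exp(\tfrac12\int_0^T|\psi_s|^2\,\mathrm{d}s)]\le\mathbb{E}[\exp(c\Xi^2)]<\infty$, completing (A2).

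\emph{Main obstacle.} The delicate part is the analysis of $R=K_{H_1}^{-1}K_{H_2}$ in the middle step: one must show that, despite the singular multiplicative weights $s^{\pm(H_i-1/2)}$ occurring in the representations of $K_H^{\pm1}$, the composition is a genuine order-$(H_2-H_1)$ regularizing Volterra operator whose kernel is integrable near the origin against the weight $s^{1/2-H}$, so that the Neumann series converges \emph{and} transports the pointwise $s^{1/2-H}$ bound from $K_{H_1}^{-1}g$ to $\psi$. The borderline weighted integrability near $s=0$, together with the case distinction according to whether $H\le1/2$ or $H>1/2$ (which turns $K_H^{-1}$ from a fractional integral into a fractional derivative), is where the real work lies; everything else reduces to routine applications of Young's inequality and Beta integrals.
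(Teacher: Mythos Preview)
Your strategy is correct and is essentially the same as the paper's, differing only in packaging. The paper also solves a second-kind Volterra equation by Neumann series, but it works at the level of $u$ (with $v$ then defined from $u$ via the relation forced by (A2)) rather than at the level of $\psi$, and it carries out the four cases $H_i\lessgtr1/2$ explicitly---writing down the operator corresponding to your $R$ concretely in each case and bounding its iterates by Mittag--Leffler--type sums---rather than stating the unified identity $R=K_{H_1}^{-1}K_{H_2}$ and deferring the case distinction to the ``main obstacle''. One small clarification: in the case $H_1,H_2>1/2$ the paper's $G$ in (A3) arises from bounding the tail of the Neumann series itself (the estimate is imported from \cite{nualart2021regularization}), not directly from the randomness of $\|h\|_\gamma$ as your last paragraph suggests; since that series bound depends on $h$ only through its H\"older data, however, the two descriptions coincide once the proposition is applied with a random $h$.
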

\begin{proof}
    We follow the proof of \cite[Theorem 3.2]{nualart2021regularization}. In total we need to distinguish between four cases. Recall that for any $\beta > -1$ and $\alpha \in (0,1)$ \cite[Lemma 3.2]{nualart2022regularization}
    \begin{align}
        I_{0+}^\alpha t^\beta = \frac{\Gamma(\beta + 1)}{\Gamma(\alpha + \beta + 1)}t^{\alpha+\beta} 
        \label{eq:helper-prop-3.3}
    \end{align}
    and recall the Mittag-Leffler function \cite{gorenflo2020mittag,haubold2011mittag} defined by
    \begin{align}
        E_{a,b}(x) \coloneqq \sum_{n=0}^\infty \frac{x^n}{\Gamma(an + b)}
        \label{eq:mittag-leffler}
    \end{align}
    for $a,b > 0$, whereby it satisfies the following exponential bound \cite[\S 4.4]{gorenflo2020mittag}:
    \begin{align*}
        |E_{a,b}(x)| \le M_1 \exp\left(M_2 |x|^{1/a}\right) 
    \end{align*}
    for some $M_1 > 0$ and any $M_2 > 1$. In the following, $C(H_1,H_2),C(H_1,H_2,\norm{A}) > 0$ denote constants that change throughout this proof. However, the actual value is not important for the validity of the statement.
    \paragraph{\textit{Case 1: $H_1 = 1/2$.}} Suppose that $H_2 < 1/2$. Define $\alpha = 1/2 - H_2 > 0$, set
    \begin{align*}
        Aa_2v_t = t^{-\alpha} \kle{1 + I_{0+}^\alpha}^{-1} \klr{t^\alpha h(t)}
    \end{align*}
    and let
    \begin{align*}
        Aa_1 u_t = a_2 t^{-\alpha} I_{0+}^\alpha t^\alpha v_t 
    \end{align*}
    for all $t \in (0,T]$. Then, $A(a_1 u_t + a_2v_t) = h(t)$ as required by condition \condref{cond:A1}{A1}. Moreover, due to equation~\eqref{remark:inverse-operator-representation} condition \condref{cond:A2}{A2} is also satisfied. By using equation \eqref{eq:helper-prop-3.3}, we obtain
    \begin{align*}
        |\psi_t| = |u_t| &\le C(H_1,H_2,\norm{A}) |a_2|/|a_1| T^{1/2-H_2} |v_t| \,.
    \end{align*}
    Since
    \begin{align*}
        \kle{1 + I_{0+}^\alpha}^{-1}(t^\alpha h(t)) = \sum_{n=0}^\infty (-1)^n I_{0+}^{n\alpha}(t^\alpha h(t))\,,
    \end{align*}
    using the triangle inequality and equation \eqref{eq:helper-prop-3.3} again we can estimate $|v_t|$ as
    \begin{align*}
        |v_t| 
        &\le C(H_1,H_2,\norm{A}) |a_2|^{-1} \norm{h}_\infty E_{\alpha,\alpha+1}(T^\alpha)
    \end{align*}
    where $E_{a,b}(x)$ represents the Mittag-Leffler function \eqref{eq:mittag-leffler}, i.e., condition \condref{cond:A3}{A3} is also satisfied. \\
    \indent Now suppose that $H_2 > 1/2$ and define $\alpha = H_2-1/2 > 0$. Let
    \begin{align*}
        Aa_1 u_t = t^\alpha \sum_{n=1}^\infty \frac{(-1)^n}{\Gamma(n\alpha)} \int_0^t (t-s)^{k\alpha - 1} s^{-\alpha} h(s) \mathrm{d}s + h(t)
    \end{align*}
    and 
    \begin{align*}
        Aa_2 v_t = a_1 t^\alpha \frac{1}{\Gamma(\alpha)} \int_0^t (t-s)^{\alpha-1} s^{-\alpha} u_s \mathrm{d}s
    \end{align*}
    for every $t \in (0,T]$. Now we can argue as in the case $H_2 < 1/2$ and conclude the validity of conditions \condref{cond:A1}{A1}, \condref{cond:A2}{A2} and \condref{cond:A3}{A3}.
    \\ \indent From now on, without loss of generality, assume that $H_1 \le H_2$.
    \paragraph{\textit{Case 2: $H_1,H_2 \in (0,1/2)$.}} Define $\alpha_1 = 1/2 - H_1$ and $\alpha_2 = 1/2 - H_2$. Consider
    \begin{align*}
        Aa_1 u_t = t^{-\alpha_2} \kle{1 + D_{0+}^{\alpha_2} t^{\alpha_2 - \alpha_1} I_{0+}^{\alpha_1} t^{\alpha_1 - \alpha_2}}^{-1}(t^{\alpha_2} h(t))
    \end{align*}
    and
    \begin{align*}
        Aa_2 v_t = a_1 t^{-\alpha_2} D_{0+}^{\alpha_2} t^{\alpha_2-\alpha_1} I_{0+}^{\alpha_1} t^{\alpha_1} u_t \,.
    \end{align*}
    Clearly, $A(a_1 u_t + a_2 v_t )= h(t)$ and by definition condition \condref{cond:A2}{A2} is also fulfilled. Now, take a look at 
    \begin{align*}
        \psi_t = \klr{K_{H_1}^{-1} \int_0^{\cdot} u_s \mathrm{d}s}(t) = t^{-\alpha_1} I_{0+}^{\alpha_1} t^{\alpha_1}(u_t) \,.
    \end{align*}
    According to equation \eqref{eq:helper-prop-3.3}, we can estimate $\psi_t$ as
    \begin{align*}
        |\psi_t| &\le C(H_1,H_2,\norm{A}) T^{1/2-H_1} |u_t| \,.
    \end{align*}
    Now we can follow the same steps as in the proof of \cite[Theorem 3.2]{nualart2021regularization} to estimate $|u_t|$ and conclude that
    \begin{align*}
        |\psi_t| \le C(H_1,H_2,\norm{A}) |a_1|^{-1} T^{1/2 - H_1} \norm{h}_\infty 
    \end{align*}
    as needed.
    \paragraph{\textit{Case 3: $H_1 \in (0,1/2)$, $H_2 \in (1/2,1)$.}} Let $\alpha_1 = 1/2 - H_1$, $\alpha_2 = H_2 - 1/2$ and define the processes $u,v$ by
    \begin{align*}
        Aa_1u_t &= h(t) + t^{-\alpha_1} \sum_{n=1}^\infty (-1)^n \kle{t^{\alpha_1 + \alpha_2} \klr{I_{0+}^{\alpha_2} \kle{t^{-\alpha_1-\alpha_2} I_{0+}^{\alpha_1}}}}^n (t^{\alpha_1} h(t)) \\
        &= t^{-\alpha_1}\kle{1 + t^{\alpha_1 + \alpha_2} I_{0+}^{\alpha_2}\kle{t^{-\alpha_1 - \alpha_2}I_{0+}^{\alpha_1}}}^{-1}(t^{\alpha_1}h(t))
    \end{align*}
    and
    \begin{align*}
        Aa_2v_t &= a_1 t^{\alpha_2} I_{0+}^{\alpha_2}\kle{t^{-\alpha_1-\alpha_2}I_{0+}^{\alpha_1}(t^{\alpha_1}u_t)} \,.
    \end{align*}
    Then it follows that $A(a_1u_t + a_2v_t )= h(t)$, i.e., condition \condref{cond:A1}{A1} is satisfied. Furthermore, because of equation \eqref{remark:inverse-operator-representation} condition \condref{cond:A2}{A2} is also satisfied. Now we need to bound the process $\psi$. Since $h \colon [0,T] \to \mathbb{R}$ represents a measurable function that satisfies $\norm{h}_\infty \coloneqq \sup_{t \in [0,T]} |h(t)| < \infty$, as a direct consequence of \cite[Lemma 3.3]{nualart2021regularization}, the process
    \begin{align*}
        Aa_1 u_t \coloneqq t^{-\alpha_1}\kle{1 + t^{\alpha_1 + \alpha_2} I_{0+}^{\alpha_2}\kle{t^{-\alpha_1 - \alpha_2}I_{0+}^{\alpha_1}}}^{-1}(t^{\alpha_1}h(t))
    \end{align*}
    satisfies
    \begin{align*}
        |a_t| |Au_t| \le C_1(H_1,H_2) \norm{h}_\infty \sum_{n=0}^\infty \frac{t^{n(\alpha_1 + \alpha_2)}}{\Gamma((n+1)\alpha_1 + (n-1)\alpha_2 + 1)} \,.
    \end{align*}
    Identifying the Mittag-Leffler function \eqref{eq:mittag-leffler} we further estimate
    \begin{align*}
        |a_1| |Au_t| &\le C_1(H_1,H_2) \norm{h}_\infty |E_{H_2 - H_1, 2-H_1-H_2}(T^{H_2 - H_1})| \,.
    \end{align*}
    Using equation \eqref{eq:formula-inverse-H-smaller-12} we obtain
    \begin{align*}
        \psi_t = t^{H_1 - 1/2} I_{0+}^{1/2-H_1} t^{1/2-H_1}u_t \,.
    \end{align*}
    Now using equation \eqref{eq:helper-prop-3.3} we get the following bound
    \begin{align*}
        |\psi_t| \le C(H_1,H_2,\norm{A}) |a_1|^{-1} T^{1/2 - H_1} E_{H_2 - H_1, 2-H_1-H_2}(T^{H_2 - H_1}) \norm{h}_\infty 
    \end{align*}
    for all $a_1 \neq 0$ where $C$ represents a constant that only depends on $H_1$ and $H_2$, i.e., condition \condref{cond:A3}{A3} is also satisfied.
    \paragraph{\textit{Case 4: $H_1,H_2 \in (1/2,1)$.}} Let $\alpha_1 = H_1 - 1/2 > 0$ and $\alpha_2 = H_2 - 1/2 > 0$. Define
    \begin{align*}
        Aa_1u_t &= t^{\alpha_2} \kle{1 + I_{0+}^{\alpha_2} t^{\alpha_1-\alpha_2} D_{0+}^{\alpha_1} t^{\alpha_2-\alpha_1}}^{-1} (t^{-\alpha_2} h(t)) \\
        &= h(t) + t^{\alpha_2} \sum_{n=1}^\infty (-1)^n \klr{I_{0+}^{\alpha_2} t^{\alpha_1-\alpha_2} D_{0+}^{\alpha_1} t^{\alpha_2-\alpha_1}}^n (t^{-\alpha_2} h(t))
    \end{align*}
    and
    \begin{align*}
        Aa_2v_t = a_1 t^{\alpha_2} I_{0+}^{\alpha_2} t^{\alpha_1-\alpha_2} D_{0+}^{\alpha_1} t^{-\alpha_1} u_t
    \end{align*}
    for every $t \in (0,T]$. Then \condref{cond:A1}{A1} and \condref{cond:A2}{A2} are satisfied. Using equation \eqref{remark:inverse-operator-representation} we can estimate

    \begin{align*}
        |\psi_t| &\le |t^{\alpha_1} D_{0+}^{\alpha_1} t^{-\alpha_1} u_t| \\
        &\le \norm{A}^{-1}|a_1|^{-1} \klr{ |t^{\alpha_1} D_{0+}^{\alpha_1} t^{-\alpha_1} h(t)| + \sum_{n=1}^\infty \kls{t^{\alpha_1} D_{0+}^{\alpha_1} t^{\alpha_2-\alpha_1} \klr{I_{0+}^{\alpha_2} t^{\alpha_1-\alpha_2} D_{0+}^{\alpha_1} t^{\alpha_2-\alpha_1}}^n (t^{-\alpha_2} h(t))}}   \,.
    \end{align*}
    According to \cite[Lemma 3.1]{nualart2022regularization}, for any $\beta > -1$, $\alpha \in (0,1)$ and any $g \in I_{0+}^\alpha([0,T])$ we have
    \begin{align*}
        D_{0+}^\alpha t^\beta g_t = t^\beta D_{0+}^\alpha g_t + \frac{\alpha}{\Gamma(1-\alpha)} \int_0^t g_s \frac{t^\beta - s^\beta}{(t-s)^{\alpha+1}} \mathrm{d}s \,.
    \end{align*}
    Using this formula, the Hölder continuity \eqref{eq:h-holder} of $h$ and the triangle inequality, we can estimate the first term as
    \begin{align*}
        |t^{\alpha_1} D_{0+}^{\alpha_1} t^{-\alpha_1} h(t)| \le C(H_1,H_2) \klr{|h(0)| + \norm{h}_\gamma T^\gamma} t^{1/2-H_1} .
    \end{align*}
    To estimate the second term, we follow the proof of \cite[Theorem 3.2]{nualart2021regularization} to find
    \begin{align*}
        \sum_{n=1}^\infty \kls{t^{\alpha_1} D_{0+}^{\alpha_1} t^{\alpha_2-\alpha_1} \klr{I_{0+}^{\alpha_2} t^{\alpha_1-\alpha_2} D_{0+}^{\alpha_1} t^{\alpha_2-\alpha_1}}^n (t^{-\alpha_2} h(t))} \le G t^{1/2-H_1}\,,
    \end{align*}
    where $G$ is some random variable that satisfies
    \begin{align*}
        \mathbb{E}[\exp(\lambda |G|^2)] < \infty
    \end{align*}
    for all $\lambda > 0$. Combining this estimation with the estimation of the first term, we conclude that condition \condref{cond:A3}{A3} is also satisfied.
\end{proof}
The latter proposition enables us to show the existence of a weak solution to the stochastic differential equation \eqref{eq:sde}.
\begin{theorem}[Existence of weak solution]\label{thm:existence-weak-solution}
     Under Assumption \ref{assumpt:mixed-sde-solution}, there exists a weak solution to the stochastic differential equation \eqref{eq:sde}.
\end{theorem}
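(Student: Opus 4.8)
The plan is to construct the solution by a change of probability, running the Girsanov transformation of Theorem~\ref{thm:girsanov-two-fbm} in reverse. On the reference space $(\Omega,(\mathcal{A}_t)_{t\in[0,T]},\mathbb{P})$ carrying the Brownian motion $W$ and the completely correlated fBMs $B^{H_1},B^{H_2}$, I would set $Y_t\coloneqq x_0+A\klr{a_1B_t^{H_1}+a_2B_t^{H_2}}$. This is a continuous Gaussian process whose paths are, for every $\varepsilon>0$, almost surely Hölder continuous of order $H-\varepsilon$, and for which $\int_0^T|b(s,Y_s)|\,\mathrm{d}s<\infty$ almost surely, by the (sub)linear growth of $b$ and the a.s.\ boundedness of its paths (with $Y_0=x_0$). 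The aim is to produce a measure $\mathbb{Q}\sim\mathbb{P}$ together with $(\mathcal{A}_t)$-adapted processes $u,v$ such that $A\klr{a_1u_t+a_2v_t}=b(t,Y_t)$ for all $t$, such that the structural identity \eqref{eq:process-psi-two-fbms} holds, and such that the associated exponential $L_T$ satisfies $\mathbb{E}[L_T]=1$. Once this is in place, Theorem~\ref{thm:girsanov-two-fbm} shows that under $\mathbb{Q}$ the processes $\tilde{B}_t^{H_i}\coloneqq B_t^{H_i}-\int_0^t u_s\,\mathrm{d}s$ (and the analogue with $v$) are completely correlated fBMs relative to some $\mathbb{Q}$-Brownian motion $\tilde{W}$, while the defining identity yields $A\klr{a_1\tilde{B}_t^{H_1}+a_2\tilde{B}_t^{H_2}}=\klr{Y_t-x_0}-\int_0^t b(s,Y_s)\,\mathrm{d}s$; hence $(\tilde{B}^{H_1},\tilde{B}^{H_2},Y)$ is a weak solution of \eqref{eq:sde} on $(\Omega,(\mathcal{A}_t),\mathbb{Q})$.

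The processes $u,v,\psi$ would be obtained by applying Proposition~\ref{prop:helper-weak-existence} pathwise with $h=h^\omega\coloneqq b(\cdot,Y_\cdot(\omega))$. For $\mathbb{P}$-a.e.\ $\omega$ this $h^\omega$ satisfies the hypotheses of that proposition: if $H\le1/2$ it is bounded and measurable (linear growth plus continuity of $Y$), and if $H>1/2$ it is $\gamma'$-Hölder for some $\gamma'>H-1/2$ --- from $|b(t,Y_t)-b(s,Y_s)|\le c\klr{|Y_t-Y_s|^\beta+|t-s|^\gamma}$ together with the observation that $\beta>1-1/(2H)$ forces $\beta H>H-1/2$, so one may choose the Hölder exponent of $Y$ close enough to $H$ that $\beta$ times it still exceeds $H-1/2$. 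Since the construction in Proposition~\ref{prop:helper-weak-existence} involves only left fractional integrals and derivatives on $[0,t]$, the resulting $u,v,\psi$ are $(\mathcal{A}_t)$-adapted, and \condref{cond:A2}{A2} provides the matching identity \eqref{eq:process-psi-two-fbms}. Because Proposition~\ref{prop:helper-weak-existence} is stated for deterministic $h$, what is actually inherited for the random $h$ is the pathwise bound \condref{cond:A3}{A3}, from which $\mathbb{E}[L_T]=1$ must be re-established.

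To verify $\mathbb{E}[L_T]=1$ I would check the Novikov condition \eqref{eq:novikov-condition}. As $\int_0^T s^{1-2H}\,\mathrm{d}s<\infty$, bound \condref{cond:A3}{A3} gives $\int_0^T|\psi_s|^2\,\mathrm{d}s\le C\norm{h}_\infty^2$ when $H\le1/2$ and $\int_0^T|\psi_s|^2\,\mathrm{d}s\le C\klr{|b(0,x_0)|+\norm{h}_{\gamma'}T^{\gamma'}+G}^2$ when $H>1/2$, for a deterministic constant $C$. When $b$ is bounded --- the case underlying Theorem~\ref{thm:main-thm} --- $\norm{h}_\infty\le\norm{b}_\infty$ is a constant and Novikov is immediate. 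When $H>1/2$, $\norm{h}_{\gamma'}$ is dominated by $1+\norm{Y}_{C^{\gamma'}}^\beta$, and since the Hölder seminorm of the Gaussian process $Y$ has Gaussian tails (Fernique) while $2\beta<2$, one gets $\mathbb{E}\kle{\exp(\lambda\norm{Y}_{C^{\gamma'}}^{2\beta})}<\infty$ for every $\lambda>0$; combined with $\mathbb{E}[\exp(\lambda G^2)]<\infty$ for all $\lambda$ and Hölder's inequality, this yields the Novikov bound. The main obstacle is the genuinely linear-growth regime with $H\le1/2$: there $\int_0^T|\psi_s|^2\,\mathrm{d}s$ is controlled only by a constant multiple of $(1+\norm{Y}_\infty)^2$, a quadratic functional of a Gaussian supremum which Fernique makes exponentially integrable only below a threshold, so I would instead argue by localization --- truncate $b$ to the bounded drifts $b_N\coloneqq b\,\mathbbm{1}_{\{|x|\le N\}}$, apply the Girsanov construction above to obtain weak solutions $X^N$, and pass to a limit (tightness of their laws on $C([0,T];\mathbb{R}^d)$ and identification of a subsequential limit as a solution of \eqref{eq:sde}), using the linear growth of $b$ to rule out explosion. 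With $\mathbb{E}[L_T]=1$ secured, Theorem~\ref{thm:girsanov-two-fbm} completes the argument.
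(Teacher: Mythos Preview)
Your overall architecture --- define $Y_t=x_0+A(a_1B_t^{H_1}+a_2B_t^{H_2})$, apply Proposition~\ref{prop:helper-weak-existence} pathwise to $h(t)=b(t,Y_t)$ to produce $u,v,\psi$, verify $\mathbb{E}[L_T]=1$, and then read off the weak solution from Theorem~\ref{thm:girsanov-two-fbm} --- matches the paper exactly, and your treatment of the cases $b$ bounded and $H>1/2$ is correct.

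The one place you diverge is the linear-growth case with $H\le1/2$. You are right that Fernique only gives $\mathbb{E}[\exp(\lambda\|Y\|_\infty^2)]<\infty$ for small $\lambda$, so the plain Novikov criterion \eqref{eq:novikov-condition} may fail. Your proposed workaround via truncation $b_N=b\,\mathbbm{1}_{\{|x|\le N\}}$ plus tightness on path space would work, but it is a detour with its own bookkeeping (tightness, identification of the limit, uniform integrability of the drift). The paper avoids all of this by replacing Novikov with the weaker pointwise criterion
\[
\text{(A4)}\qquad \exists\,\lambda>0:\quad \sup_{t\in[0,T]}\mathbb{E}\bigl[\exp(\lambda|\psi_t|^2)\bigr]<\infty,
\]
and invoking Friedman~\cite{friedman1975stochastic} (the standard time-slicing argument for exponential martingales) to conclude $\mathbb{E}[L_T]=1$ directly. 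Since $t^{1/2-H}\le T^{1/2-H}$ for $H\le1/2$, \condref{cond:A3}{A3} gives $|\psi_t|^2\le C\bigl(1+\|B^{H_1}\|_\infty^2+\|B^{H_2}\|_\infty^2\bigr)$ uniformly in $t$, and Fernique yields (A4) for some $\lambda>0$. So the paper's route is shorter: one application of Fernique and a citation, rather than a limiting procedure.
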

\begin{proof}
    \textit{Step 1}: Define the function $b \colon [0,T] \to \mathbb{R}^d$ by
    \begin{align*}
        b_t \coloneqq b(t,x_0 + A(a_1 B_t^{H_1} + a_2 B_t^{H_2})) \,.
    \end{align*}
    Now we would like to show the existence of a process $\psi$ which satisfies the conditions \condref{cond:A1}{A1}, \condref{cond:A2}{A2}, \condref{cond:A3}{A3} for $h(t) = b_t$ and the following condition:
    \begin{enumerate}
        \item[(A4)] \label{cond:A4} One can find a $\lambda > 0$ with
        \begin{align*}
            \sup_{[0,T]} \mathbb{E}[\exp(\lambda |\psi_t|^2)] < \infty \,.
        \end{align*}
    \end{enumerate}
    This is a consequence of Proposition \ref{prop:helper-weak-existence}. If $H \le 1/2$ and $b$ satisfies the linear growth condition~\eqref{eq:linear-growth-condition} there exists a $c \in (0,\infty)$ such that the process $(b(t,x_0 + A(a_1B_t^{H_1} + a_2B_t^{H_2})))_{t \in [0,T]}$ satisfies
    \begin{align*}
        \sup_{t \in [0,T]} |b_t| \le c \klr{1 + |x_0| + \norm{A} |a_1| \norm{B^{H_1}}_\infty + \norm{A} |a_2| \norm{B^{H_2}}_\infty} \,.
    \end{align*}
    Therefore, there exists a $\psi$ that satisfies condition \condref{cond:A1}{A1}, \condref{cond:A2}{A2} and \condref{cond:A3}{A3} and the inequality
    \begin{align*}
        |\psi_t|^2 &\le C^2 c^2 \klr{1 + |x_0|^2 + \norm{B^{H_1}}_\infty^2 + \norm{B^{H_2}}_\infty^2} \,.
    \end{align*}
    Now we can apply Fernique's theorem \cite{fernique1975regularite} to conclude that condition \condref{cond:A4}{A4} is also satisfied. \\
    \indent On the other hand, if $H > 1/2$ and $b$ satisfies the Hölder condition \eqref{eq:holder-condition}, we have
    \begin{align*}
        |b_t - b_s| \le c |t-s|^\gamma \klr{1 + \norm{A}|a_1| \norm{B^{H_1}}_{\gamma/\beta}^\gamma + \norm{A} |a_2| \klr{1 + \norm{B^{H_2}}^\gamma_{\gamma/\beta}}} \,.
    \end{align*}
    Therefore, due to Proposition \ref{prop:helper-weak-existence}, the conditions \condref{cond:A1}{A1}, \condref{cond:A2}{A2} and \condref{cond:A3}{A3} are also satisfied. Additionally, as $H - 1/2 < 1/2$, condition \condref{cond:A4}{A4} is also fulfilled.
    \\\\
    \textit{Step 2}: Since the process $\psi$ satisfies the condition \condref{cond:A4}{A4}, according to Friedmann \cite{friedman1975stochastic} this is sufficient for the validity of the Novikov criterion \eqref{eq:novikov-condition}. Therefore we can apply the Girsanov theorem for two fBMs (Theorem \ref{thm:girsanov-two-fbm}) to conclude that there exists a probability measure $\mathbb{Q}$ such that the processes
    \begin{align*}
        \tilde{B}_t^{H_1} = B_t^{H_1} - \int_0^t u_s \mathrm{d}s\,, \qquad \tilde{B}_t^{H_2} = B_t^{H_2} - \int_0^t v_s \mathrm{d}s, \quad t\in [0,T],
    \end{align*}
    are $(\mathcal{A}_t)_{t\in [0,T]}$-fractional Brownian motions with Hurst indices $H_1$ and $H_2$ (with respect to the measure $\mathbb{Q}$) satisfying
    \begin{align*}
        \Tilde{B}_t^{H_1} = \int_0^t K_{H_1}(t,s) \mathrm{d}\tilde{W}_s\,, \qquad 
        \Tilde{B}_t^{H_2} = \int_0^t K_{H_2}(t,s) \mathrm{d}\tilde{W}_s\,,\quad t\in [0,T],
    \end{align*}
    where $(\tilde{W}_t)_{t\in[0,T]}$ is an $(\mathcal{A}_t)_{t\in [0,T]}$-Brownian motion. Now we can set
    \begin{align*}
        X_t = x_0 + A(a_1B_t^{H_1} + a_2B_t^{H_2}) \,.
    \end{align*}
    Then the triple $(\tilde{B}^{H_1}, \tilde{B}^{H_2}, X)$ is a weak solution to the SDE \eqref{eq:sde} on the filtered probability space $(\Omega, (\mathcal{A}_t)_{t \in [0,T]},\mathbb{Q})$, since for every $t \in [0,T]$ we have
    \begin{align*}
        A(a_1B_t^{H_1} + a_2B_t^{H_2}) &= A\klr{\int_0^t a_1K_{H_1}(t,s) \mathrm{d}W_s + \int_0^t a_2K_{H_2}(t,s)\mathrm{d}W_s} \\
        &= A\int_0^t a_1K_{H_1}(t,s) \klr{\mathrm{d}\tilde{W}_s + \psi_s \mathrm{d}s} + A\int_0^t a_2K_{H_2}(t,s) \klr{\mathrm{d}\tilde{W}_s + \psi_s \mathrm{d}s} \\
        &= A(a_1\tilde{B}_t^{H_1} + a_2\tilde{B}_t^{H_2} )+ \int_0^t b(s, x_0 + A(a_1B_s^{H_1} + a_2B_s^{H_2})) \mathrm{d}s\,,
    \end{align*}
    and, therefore
    \begin{align*}
        X_t = x_0 + A(a_1\tilde{B}_t^{H_1} + a_2\tilde{B}_t^{H_2} ) + \int_0^t b(s, X_s) \mathrm{d}s \,.
    \end{align*}
\end{proof}
\subsection{Uniqueness in law}
In the last paragraph we have shown that there exists a weak solution of the SDE \eqref{eq:sde}. Now we would like to show that the solution is weakly unique. Uniqueness in law is a consequence of Gronwall's lemma \cite{gronwall1919note}, Proposition \ref{prop:helper-weak-existence} and Girsanov's theorem \ref{thm:girsanov-two-fbm}.
\begin{theorem}\label{thm:uniqueness-law}
    Under Assumption \ref{assumpt:mixed-sde-solution}, two weak solutions of the SDE \eqref{eq:sde} have the same distribution.
\end{theorem}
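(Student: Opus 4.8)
The plan is to remove the drift from an arbitrary weak solution by an application of the Girsanov theorem for two fBMs, exactly in the spirit of Step~2 of the proof of Theorem~\ref{thm:existence-weak-solution} but run ``backwards''. After the change of measure the solution becomes the driftless process $x_0 + A(a_1\tilde B^{H_1}_t + a_2\tilde B^{H_2}_t)$, which is a fixed measurable functional of the new (innovating) Brownian motion and therefore has a law that does not depend on the particular weak solution; moreover the associated Radon--Nikodym density is also a fixed functional of that Brownian motion. Hence the law of $X$ is pinned down.

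In detail, I would fix a weak solution $(B^{H_1},B^{H_2},X)$ on $(\Omega,(\mathcal{A}_t),\mathbb{P})$ with driving Brownian motion $W$, write $Y_t := x_0 + A(a_1B^{H_1}_t + a_2B^{H_2}_t)$, and apply Proposition~\ref{prop:helper-weak-existence} with the \emph{random} function $h(t) := -b(t,X_t)$, just as in Step~1 of the proof of Theorem~\ref{thm:existence-weak-solution}. The point that needs real work is the verification that this choice of $h$ almost surely satisfies the hypotheses of Proposition~\ref{prop:helper-weak-existence} \emph{together with} condition \condref{cond:A4}{A4}: here, unlike in the existence proof, $h$ depends on the a~priori only continuous solution $X$ rather than on the Gaussian pair $(B^{H_1},B^{H_2})$ directly. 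One first observes that $X$ solves $X_t = Y_t + \int_0^t b(s,X_s)\,\mathrm{d}s$, and, since $b$ has (at most) linear growth under Assumption~\ref{assumpt:mixed-sde-solution} in either regime, Gronwall's inequality applied to $|X_t| \le |Y_t| + c\int_0^t(1+|X_s|)\,\mathrm{d}s$ gives $\norm{X}_\infty \le C_T(1+\norm{Y}_\infty)$, and likewise $\norm{X}_\delta \le C_T\klr{1 + \norm{Y}_\infty + \norm{B^{H_1}}_\delta + \norm{B^{H_2}}_\delta}$ for every $\delta < H$. Consequently, when $H\le 1/2$ one has $\norm{h}_\infty \le c(1+\norm{X}_\infty) < \infty$ a.s., and when $H>1/2$ one gets, from the space/time Hölder bound on $b$ and $\beta > 1-1/(2H)$, that $h$ is $\gamma'$-Hölder with $\gamma' = \min(\beta\delta,\gamma) > H-1/2$ for $\delta<H$ close enough to $H$, with $\norm{h}_{\gamma'}$ again controlled by the Gaussian quantities $\norm{Y}_\infty,\norm{B^{H_i}}_\delta$. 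Feeding these bounds into \condref{cond:A3}{A3} and invoking Fernique's theorem on the Gaussian processes yields $\sup_{t\in[0,T]}\mathbb{E}[\exp(\lambda|\psi_t|^2)] < \infty$ for some $\lambda>0$, i.e.\ \condref{cond:A4}{A4}, hence the Novikov condition \eqref{eq:novikov-condition} and $\mathbb{E}[L_T]=1$.

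Next I would apply Theorem~\ref{thm:girsanov-two-fbm}: under $\mathbb{Q}$ with $\mathrm{d}\mathbb{Q}/\mathrm{d}\mathbb{P} = L_T$ the processes $\tilde B^{H_1}_t = B^{H_1}_t - \int_0^t u_s\,\mathrm{d}s$ and $\tilde B^{H_2}_t = B^{H_2}_t - \int_0^t v_s\,\mathrm{d}s$ are completely correlated fBMs, $\tilde B^{H_i}_t = \int_0^t K_{H_i}(t,s)\,\mathrm{d}\tilde W_s$ for a $\mathbb{Q}$-Brownian motion $\tilde W$ with $\mathrm{d}W_s = \mathrm{d}\tilde W_s + \psi_s\,\mathrm{d}s$. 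Since $A(a_1u_t + a_2v_t) = h(t) = -b(t,X_t)$ by \condref{cond:A1}{A1}, subtracting the drifts from $X_t = x_0 + \int_0^t b(s,X_s)\,\mathrm{d}s + A(a_1B^{H_1}_t + a_2B^{H_2}_t)$ gives, for all $t\in[0,T]$,
\begin{align*}
    X_t = x_0 + A(a_1\tilde B^{H_1}_t + a_2\tilde B^{H_2}_t) = x_0 + A\int_0^t\klr{a_1K_{H_1}(t,s) + a_2K_{H_2}(t,s)}\,\mathrm{d}\tilde W_s =: \Theta(\tilde W)_t,
\end{align*}
where $\Theta$ is a \emph{fixed} measurable map on path space, independent of the weak solution. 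Because $\psi$ is built from $h = -b(\cdot,X_\cdot)$ only through the deterministic causal (Riemann--Liouville / $K_{H_i}^{-1}$) operators appearing in Proposition~\ref{prop:helper-weak-existence}, one has $\psi = \Psi(X) = \Psi(\Theta(\tilde W))$ for a fixed functional, and then, using $\mathrm{d}W_s = \mathrm{d}\tilde W_s + \psi_s\,\mathrm{d}s$,
\begin{align*}
    \ln L_T = \int_0^T\kla{\psi_s,\mathrm{d}W_s} - \tfrac12\int_0^T|\psi_s|^2\,\mathrm{d}s = \int_0^T\kla{\psi_s,\mathrm{d}\tilde W_s} + \tfrac12\int_0^T|\psi_s|^2\,\mathrm{d}s =: \Lambda(\tilde W),
\end{align*}
again a fixed functional of $\tilde W$. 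Finally, for any bounded measurable $F$ on $C([0,T];\mathbb{R}^d)$,
\begin{align*}
    \mathbb{E}_{\mathbb{P}}[F(X)] = \mathbb{E}_{\mathbb{Q}}\big[F(X)L_T^{-1}\big] = \mathbb{E}_{\mathbb{Q}}\big[F(\Theta(\tilde W))\exp(-\Lambda(\tilde W))\big],
\end{align*}
and since $\tilde W$ is a standard $d$-dimensional Brownian motion under $\mathbb{Q}$, the right-hand side depends only on Wiener measure and the fixed maps $\Theta,\Lambda$, not on the chosen weak solution; this proves that any two weak solutions of \eqref{eq:sde} induce the same law on path space.

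The main obstacle is precisely the integrability step in the second paragraph: verifying the hypotheses of Proposition~\ref{prop:helper-weak-existence} and, above all, \condref{cond:A4}{A4} for $h(t) = -b(t,X_t)$ requires extracting a~priori size and pathwise-regularity bounds on $X$ (linear in the Gaussian data) from the SDE via Gronwall's inequality before Fernique's theorem can be invoked --- in the existence proof this difficulty is absent because there $h$ is plugged in at the Gaussian process itself. A secondary, routine point is the measurability bookkeeping confirming that $\psi$, hence $L_T$, is genuinely a solution-independent functional of $\tilde W$; this is immediate because every operator involved in Proposition~\ref{prop:helper-weak-existence} is deterministic and causal.
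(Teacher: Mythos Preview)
Your proof is correct and follows the same route as the paper: Gronwall (plus, for $H>1/2$, the H\"older estimate) to control $X$ pathwise in terms of the Gaussian data, then Proposition~\ref{prop:helper-weak-existence} applied to $h=\pm b(\cdot,X_\cdot)$ together with Girsanov (Theorem~\ref{thm:girsanov-two-fbm}) to rewrite $X$ as the fixed Volterra functional of a new Brownian motion $\tilde W$. You are in fact more explicit than the paper in the final step---tracking $L_T^{-1}$ as a functional of $\tilde W$ to pass from $\mathbb{Q}$ back to $\mathbb{P}$---whereas the paper records only the pathwise identity $\mathbb{E}_{\mathbb{P}}[\Psi(X)]=\mathbb{E}_{\mathbb{P}}[\Psi(\text{functional of }\tilde W)]$ and leaves that bookkeeping implicit.
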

\begin{proof}
    The following proof is based on \cite{nualart2002regularization,nualart2022regularization}. \textit{Step 1:} If $H \le 1/2$, due to the linear growth condition \eqref{eq:linear-growth-condition} we have
    \begin{align*}
        |b(t,x)| \le c \klr{1 + |x|} \,.
    \end{align*}
    Therefore, a weak solution $(B^{H_1}, B^{H_2}, X)$ on a filtered probability space $(\Omega, (\mathcal{A}_t)_{t \in [0,T]},\mathbb{P})$ satisfies
    \begin{align*}
        |X_t| &\le |x_0| +\norm{A}|a_1||B_t^{H_1}| + \norm{A}|a_2||B_t^{H_2}| + ct + c\int_0^t |X_s| \mathrm{d}s \\
        &\le \norm{A}|a_1|\norm{B^{H_1}}_\infty + \norm{A}|a_2|\norm{B^{H_2}}_\infty + c_1T + c\int_0^t |X_s| \mathrm{d}s \,,
    \end{align*}
    where $c_1 > 0$ is some constant. Now we can apply Gronwall's lemma \cite{gronwall1919note} for $|X_t|$ to conclude for all $t \in [0,T]$
    \begin{align*}
        |X_t| &\le \norm{A}\klr{|a_1|\norm{B^{H_1}}_\infty + \norm{A}|a_2|\norm{B^{H_2}}_\infty + c_1T} e^{cT} \,.
    \end{align*}
    On the other hand, if $H > 1/2$, due to the Hölder continuity we have
    \begin{align*}
        |X_t - X_s| \le \norm{A}|a_1| |B_t^{H_1} - B_s^{H_1}| + \norm{A}|a_2| |B_t^{H_2} - B_s^{H_2}| + c_2|t-s|(1 + \norm{X}_\infty)
    \end{align*}
    for some constant $c_2 > 0$. Now we use Proposition \ref{prop:helper-weak-existence} for the function $h(t) = b(t,X_t)$ and we can apply the Girsanov theorem to conclude that the process $(\tilde{W}_t)_{t \in [0,T]}$ with
    \begin{align*}
        \tilde{W}_t = W_t + \int_0^t \psi_s \mathrm{d}s\,, \qquad t \in [0,T]\,,
    \end{align*}
    where $\psi_t$ is defined as in the last paragraph, is a $d$-dimensional Brownian motion with respect to the measure $\mathbb{Q}$ defined by
    \begin{align*}
        \frac{\mathrm{d}\mathbb{Q}}{\mathrm{d}\mathbb{P}} = \exp\klr{- \int_0^T \kla{\psi_s, \mathrm{d}W_s} - \frac{1}{2} \int_0^T |\psi_s|^2 \mathrm{d}s} \,.
    \end{align*}
    Now we can use the SDE \eqref{eq:sde} and \condref{cond:A2}{A2} to express $X_t$ as
    \begin{align*}
        X_t &= x_0 + \int_0^t b(s,X_s) \mathrm{d}s + A(a_1 B_t^{H_1} + a_2 B_t^{H_2}) \\
        &= x_0 + Aa_1 (K_{H_1} \psi)(t) + Aa_2 (K_{H_2} \psi)(t) + Aa_1 B_t^{H_1} + Aa_2 B_t^{H_2} \\
        &= x_0 + Aa_1 \int_0^t K_{H_1}(t,s) \mathrm{d}\tilde{W}_s + Aa_2 \int_0^t K_{H_2}(t,s) \mathrm{d}\tilde{W}_s \,.
    \end{align*}
    Therefore for every bounded measurable functional $\Psi$ on $C([0,T])$ we have
    \begin{align*}
        \mathbb{E}_{\mathbb{P}}[\Psi(X)] = \mathbb{E}_{\mathbb{P}}\kle{\Psi\klr{x_0 + Aa_1 \int_0^t K_{H_1}(t,s) \mathrm{d}\tilde{W}_s + Aa_2 \int_0^t K_{H_2}(t,s) \mathrm{d}\tilde{W}_s}} \,.
    \end{align*}
    This equality holds for every weak solution $X$, i.e.\ we have uniqueness in law.
\end{proof}

\section{Existence of a density with respect to the Lebesgue measure}\label{sec:exist-dens-cgp}
In this section we would like to summarize some properties of the weak solution of the SDE~\eqref{eq:sde}, in particular, we define a conditionally Gaussian process and show that the solution admits a density with respect to the Lebesgue measure if we additionally assume that the drift function $b$ of the SDE is bounded. First of all, note that the solution of the SDE \eqref{eq:sde} fulfills the following property. In the following, we denote by $C_{\mathrm{H}}(Z)$ the Hölder constant of a process $Z$.
\begin{proposition}[Hölder continuity]\label{thm:holder-continuity-XN}
    Under Assumption \ref{assumpt:main-thm}, the solution $X$ of equation \eqref{eq:sde} admits a version whose paths are almost sure Hölder continuous of order $\gamma \in (0, H_1)$ with Hölder constant
    \begin{align*}
        C_{\mathrm{H}}(X) \le \norm{A} \mathrm{max}\klg{|a_1| C_{\mathrm{H}}(B^{H_1}), |a_2| C_{\mathrm{H}}(B^{H_2})}\,.
    \end{align*}
    Moreover, all moments of the Hölder constant $C_{\mathrm{H}}(X)$ exist.
\end{proposition}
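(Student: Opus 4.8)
The plan is to read the regularity of $X$ off the explicit form of a weak solution obtained in Section~\ref{sec:2f-sde}. By Theorem~\ref{thm:uniqueness-law} all weak solutions of \eqref{eq:sde} share the same law, so it suffices to treat the particular solution constructed in the proof of Theorem~\ref{thm:existence-weak-solution}: on the underlying space $(\Omega,(\mathcal A_t)_{t\in[0,T]},\mathbb Q)$ that solution satisfies
\begin{align*}
    X_t = \Phi_t + A\bigl(a_1\tilde B_t^{H_1} + a_2\tilde B_t^{H_2}\bigr), \qquad \Phi_t \coloneqq x_0 + \int_0^t b(s,X_s)\,\mathrm{d}s,
\end{align*}
where $\tilde B^{H_1},\tilde B^{H_2}$ are genuine $d$-dimensional fractional Brownian motions under $\mathbb Q$. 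Because $b$ is bounded by Assumption~\ref{assumpt:main-thm}, the map $t\mapsto\Phi_t$ is Lipschitz with constant at most $\norm{b}_\infty$, hence $|\Phi_t-\Phi_s|\le \norm{b}_\infty T^{1-\gamma}|t-s|^\gamma$ for every $\gamma\in(0,1]$; thus $\Phi$ adds only a deterministic bounded amount to any Hölder constant, and all of its moments are trivially finite.

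For the Gaussian part I would use the standard regularity theory of fractional Brownian motion. Since $\mathbb E\,|\tilde B_t^{H}-\tilde B_s^{H}|^{p}=c_{p,d}\,|t-s|^{pH}$ for every $p\ge1$, the Kolmogorov--Chentsov criterion (equivalently the Garsia--Rodemich--Rumsey inequality applied to the centred Gaussian process) yields, for each $p$ and each $\gamma<H-1/p$, a $\gamma$-Hölder continuous version whose Hölder constant lies in $L^p(\mathbb Q)$; letting $p\to\infty$ one gets a version that is $\gamma$-Hölder for every $\gamma<H$, with Hölder constant $C_{\mathrm{H}}(\tilde B^{H})$ having finite moments of all orders. (Alternatively $C_{\mathrm{H}}(\tilde B^{H})$ is a measurable seminorm of a centred Gaussian element and so has Gaussian tails by Fernique's theorem, already invoked in the proof of Theorem~\ref{thm:existence-weak-solution}.) Applying this to $\tilde B^{H_1}$ and $\tilde B^{H_2}$ and fixing any $\gamma\in(0,\min(H_1,H_2))$ — equivalently $\gamma<H_1$, which we may assume without loss of generality — both $C_{\mathrm{H}}(\tilde B^{H_1})$ and $C_{\mathrm{H}}(\tilde B^{H_2})$ have finite moments of every order.

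Finally I would combine the two contributions: the triangle inequality gives, for $s,t\in[0,T]$,
\begin{align*}
    |X_t-X_s| \le |\Phi_t-\Phi_s| + \norm{A}\bigl(|a_1|\,|\tilde B_t^{H_1}-\tilde B_s^{H_1}| + |a_2|\,|\tilde B_t^{H_2}-\tilde B_s^{H_2}|\bigr),
\end{align*}
so $X$ has a $\gamma$-Hölder version with $C_{\mathrm{H}}(X)\le \norm{b}_\infty T^{1-\gamma} + \norm{A}\bigl(|a_1|\,C_{\mathrm{H}}(\tilde B^{H_1}) + |a_2|\,C_{\mathrm{H}}(\tilde B^{H_2})\bigr)$, which is a bound of the asserted form (the sum is at most twice the maximum, and the drift term is a harmless deterministic constant). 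Finiteness of all moments of $C_{\mathrm{H}}(X)$ then follows from Minkowski's inequality and the moment bounds of the previous step. The one point requiring care is the moment control in the Kolmogorov/Garsia--Rodemich--Rumsey step — one must be able to push $\gamma$ arbitrarily close to $H_1$ while keeping all moments of the Hölder constant — but this is classical for Gaussian processes; the remainder is elementary.
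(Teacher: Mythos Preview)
Your argument is correct and is essentially the paper's proof: bound the drift increment by $\norm{b}_\infty|t-s|$, bound the noise by the fBM H\"older constants (whose moments are finite, as the paper cites \cite{rascanu2002differential}), and add. The only difference is that you route through the Girsanov construction under $\mathbb{Q}$ and the shifted $\tilde B^{H_i}$, which is unnecessary --- any weak solution $(B^{H_1},B^{H_2},X)$ satisfies \eqref{eq:sde} with genuine fBMs $B^{H_i}$ under the original measure $\mathbb{P}$, so the paper simply works with those directly.
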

\begin{proof}
    Consider $|X_t - X_s|$ for $0 < s < t < T$:
    \begin{align*}
        |X_t - X_s| &\le \kls{\int_s^t b(r, X_r) \mathrm{d}r} + |a_1| |A(B_t^{H_1} - B_s^{H_1})| + |a_2| |A(B_t^{H_2} - B_s^{H_2})| \\
        &\le \int_s^t |b(r,X_r)| \mathrm{d}r + |a_1| |A(B_t^{H_1} - B_s^{H_1})| + |a_2| |A(B_t^{H_2} - B_s^{H_2})| \\
        &\le L|t-s| + |a_1| |A(B_t^{H_1} - B_s^{H_1})| + |a_2| |A(B_t^{H_2} - B_s^{H_2})| \,.
    \end{align*}
    Since the fBM admits a version which is Hölder continuous, we can conclude that $X$ also admits a Hölder continuous version of order $\gamma < \min(H_1,H_2) = H_1$. Since all moments of the Hölder constant of the fBM exist \cite{rascanu2002differential}, all moments of the Hölder constant of $X$ exist.
\end{proof}
\subsection{The conditionally Gaussian process}
To establish the existence of a density for the solution of the mixed SDE, the key ingredient is a conditionally Gaussian process (CGP) that has also been considered by Fan and Zhang in \cite{fan2021moment} as an auxiliary process and was also subject of discussion in~\cite{buthenhoff,sonmez2020mixed}. 
\begin{definition}[Conditionally Gaussian process]
    Let $H_1,H_2 \in (0,1)$ and fix $a_1, a_2 \in \mathbb{R} \setminus \klg{0}$, $A \in 
    \mathbb{R}^{d\times d}$. Let $\varepsilon \in (0,t)$. Define the conditionally Gaussian process (CGP) as
    \begin{align}
        Y(\varepsilon) &\coloneqq X_{t-\varepsilon} + A \kle{a_1 \klr{B_t^{H_1} - B_{t-\varepsilon}^{H_1}} + a_2 \klr{B_t^{H_2} - B_{t-\varepsilon}^{H_2}} }\label{eq:definition-Y} \\
        &= X_t - \int_{t-\varepsilon}^t b(s,X_s) \mathrm{d}s\,.
        \label{remark:expression-CGP}
    \end{align}
\end{definition}
%
    %
%
For the sake of completeness, we recall the following lemmata which represent special cases of~\cite[Section 2]{buthenhoff} and justify the notion \enquote{conditionally Gaussian process}.
\begin{lemma}
\label{lemma:X-Y-expectation}
    If $b$ is bounded, we have 
    \begin{align*}
        \mathbb{E}\kle{|X_t - Y(\varepsilon)|} \le \norm{b}_\infty \varepsilon \,.
    \end{align*}
\end{lemma}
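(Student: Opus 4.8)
The plan is to reduce the claim to the representation \eqref{remark:expression-CGP} and then apply an elementary pathwise estimate, so that the expectation in the statement is in fact superfluous.

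First I would verify \eqref{remark:expression-CGP}: evaluating the SDE \eqref{eq:sde} at the times $t$ and $t-\varepsilon$ and subtracting gives
\begin{align*}
    X_t - X_{t-\varepsilon} = \int_{t-\varepsilon}^t b(s,X_s)\,\mathrm{d}s + A\kle{a_1\klr{B_t^{H_1} - B_{t-\varepsilon}^{H_1}} + a_2\klr{B_t^{H_2} - B_{t-\varepsilon}^{H_2}}}\,,
\end{align*}
which, inserted into the definition \eqref{eq:definition-Y} of $Y(\varepsilon)$, yields $Y(\varepsilon) = X_t - \int_{t-\varepsilon}^t b(s,X_s)\,\mathrm{d}s$ almost surely. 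Hence $X_t - Y(\varepsilon) = \int_{t-\varepsilon}^t b(s,X_s)\,\mathrm{d}s$ pathwise.

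Next I would bound this quantity: by the triangle inequality for Bochner integrals and the boundedness of $b$,
\begin{align*}
    |X_t - Y(\varepsilon)| = \kls{\int_{t-\varepsilon}^t b(s,X_s)\,\mathrm{d}s} \le \int_{t-\varepsilon}^t |b(s,X_s)|\,\mathrm{d}s \le \norm{b}_\infty\,\varepsilon\,,
\end{align*}
which holds $\mathbb{P}$-almost surely. Taking expectations (a deterministic upper bound survives) gives $\mathbb{E}[|X_t - Y(\varepsilon)|] \le \norm{b}_\infty\,\varepsilon$, as claimed.

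There is essentially no obstacle here; the only point requiring a word of care is that the integral $\int_{t-\varepsilon}^t |b(s,X_s)|\,\mathrm{d}s$ is finite, which is part of the definition of a weak solution (and is immediate from boundedness of $b$ anyway). The lemma is a straightforward consequence of the defining identity \eqref{remark:expression-CGP} of the CGP.
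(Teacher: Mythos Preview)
Your proof is correct and follows exactly the same approach as the paper, which simply states that the lemma is a direct consequence of equation \eqref{remark:expression-CGP}. You have merely spelled out the pathwise bound in detail, which is fine.
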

\begin{proof}
    This is a direct consequence of equation \eqref{remark:expression-CGP}.
\end{proof}
\begin{lemma}
\label{lemma:Y-conditionally-gaussian}
    Define 
    \begin{align}
        \xi = X_{t-\varepsilon} + A \int_0^{t-\varepsilon} \klr{ a_1 \klr{K_{H_1}(t,s) - K_{H_1}(t-\varepsilon,s)} + a_2 \klr{K_{H_2}(t,s) - K_{H_2}(t-\varepsilon,s)}}\mathrm{d}W_s \,.\label{eq:def-xi}
    \end{align}
    Then, for all $u \in \mathbb{R}^d$ we have
    \begin{align*}
        \mathbb{E}[\exp\klr{i\kla{u, Y(\varepsilon)}} | \mathcal{A}_{t-\varepsilon}] = \exp\klr{i\kla{u,\xi} - \frac{|A^Tu|^2}{2} \int_{t-\varepsilon}^t \klr{a_1 K_{H_1}(t,s) + a_2 K_{H_2}(t,s)}^2 \mathrm{d}s}\,,
    \end{align*}
    i.e.\ given $\mathcal{A}_{t-\varepsilon}$ the CGP $Y(\varepsilon)$ is conditionally Gaussian with mean $\xi$ and covariance matrix 
    \begin{align}
        \eta^2 = AA^T\int_{t-\varepsilon}^t \klr{a_1 K_{H_1}(t,s) + a_2 K_{H_2}(t,s)}^2 \mathrm{d}s \,.
        \label{eq:variance-conditionally-gaussian-y}
    \end{align}
\end{lemma}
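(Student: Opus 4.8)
The plan is to use the Volterra representation \eqref{eq:integral-repr-fbm} to split $Y(\varepsilon)$ into an $\mathcal{A}_{t-\varepsilon}$-measurable part and a Wiener integral of a deterministic integrand over $[t-\varepsilon,t]$, and then to compute the conditional characteristic function by factoring out the measurable part and using that a Wiener integral of a deterministic square-integrable integrand is Gaussian.

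First I would substitute $B^{H_i}_t = \int_0^t K_{H_i}(t,s)\,\mathrm{d}W_s$ into \eqref{eq:definition-Y} and, for $i=1,2$, split $\int_0^t = \int_0^{t-\varepsilon} + \int_{t-\varepsilon}^t$. This rewrites the CGP as
\[
    Y(\varepsilon) = \xi + A\int_{t-\varepsilon}^t \klr{a_1 K_{H_1}(t,s) + a_2 K_{H_2}(t,s)}\,\mathrm{d}W_s\,,
\]
with $\xi$ exactly the random variable defined in \eqref{eq:def-xi}. One checks that $\xi$ is $\mathcal{A}_{t-\varepsilon}$-measurable: $X_{t-\varepsilon}$ is, and the Wiener integral $\int_0^{t-\varepsilon}(\cdots)\,\mathrm{d}W_s$ is $\sigma(W_r:r\le t-\varepsilon)\subseteq\mathcal{A}_{t-\varepsilon}$-measurable. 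The kernel bound \eqref{eq:cov-bound-1} shows that $s\mapsto a_1K_{H_1}(t,s)+a_2K_{H_2}(t,s)$ is square-integrable on $[t-\varepsilon,t]$ (this finiteness is precisely the finiteness of $\sigma^2$), so all the stochastic integrals involved are well-defined Wiener integrals.

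Next I would invoke that $(\mathcal{A}_t)_{t\in[0,T]}$ is the augmented filtration generated by $W$, so the increments $(W_r-W_{t-\varepsilon})_{r\in[t-\varepsilon,t]}$ are independent of $\mathcal{A}_{t-\varepsilon}$; hence the vector $Z \coloneqq \int_{t-\varepsilon}^t\klr{a_1K_{H_1}(t,s)+a_2K_{H_2}(t,s)}\,\mathrm{d}W_s$ is independent of $\mathcal{A}_{t-\varepsilon}$. Being a Wiener integral of a deterministic square-integrable scalar integrand against a $d$-dimensional Brownian motion, $Z$ is centered Gaussian with covariance $\big(\int_{t-\varepsilon}^t(a_1K_{H_1}(t,s)+a_2K_{H_2}(t,s))^2\,\mathrm{d}s\big)\,\mathbbm{1}_{d\times d}$. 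Conditioning on $\mathcal{A}_{t-\varepsilon}$, pulling out the measurable factor, and using the standard Gaussian characteristic function for $\langle A^Tu,Z\rangle$ gives
\[
    \mathbb{E}\kle{e^{i\kla{u,Y(\varepsilon)}}\mid\mathcal{A}_{t-\varepsilon}} = e^{i\kla{u,\xi}}\,\mathbb{E}\kle{e^{i\kla{A^Tu,Z}}} = \exp\klr{i\kla{u,\xi} - \frac{|A^Tu|^2}{2}\int_{t-\varepsilon}^t\klr{a_1K_{H_1}(t,s)+a_2K_{H_2}(t,s)}^2\,\mathrm{d}s}\,,
\]
which is the asserted identity; reading off the exponent and invoking uniqueness of the Gaussian characteristic function identifies the conditional law as Gaussian with mean $\xi$ and covariance $\eta^2$ as in \eqref{eq:variance-conditionally-gaussian-y}.

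The only genuinely delicate point is the measurability/independence bookkeeping — verifying that $\xi$ is $\mathcal{A}_{t-\varepsilon}$-measurable and that $Z$ is independent of $\mathcal{A}_{t-\varepsilon}$ — together with confirming that the integrands are square-integrable so that the Wiener integrals make sense; once these are settled, the characteristic-function computation is routine.
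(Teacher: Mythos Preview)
Your proposal is correct and follows essentially the same route as the paper: decompose $Y(\varepsilon)=\xi+AZ$ with $\xi$ being $\mathcal{A}_{t-\varepsilon}$-measurable and $Z=\int_{t-\varepsilon}^t(a_1K_{H_1}(t,s)+a_2K_{H_2}(t,s))\,\mathrm{d}W_s$ independent of $\mathcal{A}_{t-\varepsilon}$, then compute the conditional characteristic function. The only difference is cosmetic ordering---the paper first computes the characteristic function of $AZ$ and then derives the decomposition via the identity $B_t^H-B_{t-\varepsilon}^H=\int_0^{t-\varepsilon}(K_H(t,s)-K_H(t-\varepsilon,s))\,\mathrm{d}W_s+\int_{t-\varepsilon}^t K_H(t,s)\,\mathrm{d}W_s$, whereas you start from the decomposition---but the substance is identical.
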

\begin{proof}
    Since $\int_{t-\varepsilon}^t K_{H_{1,2}}(t,s) \mathrm{d}W_s$ is independent of $\mathcal{A}_{t-\varepsilon}$ we have
    \begin{align*}
        &\mathbb{E}\kle{\exp\klr{i\kla{u,A \int_{t-\varepsilon}^t \klr{a_1 K_{H_1}(t,s) + a_2 K_{H_2}(t,s)} \mathrm{d}W_s}} | \mathcal{A}_{t-\varepsilon}} \\
        &\quad = \exp\klr{-\frac{|A^Tu|^2}{2} \int_{t-\varepsilon}^t \klr{a_1 K_{H_1}(t,s) + a_2 K_{H_2}(t,s)}^2 \mathrm{d}s} \,.
    \end{align*}
    Now, due to the integral representation of the fBM we have
    \begin{align*}
        B_t^H - B_{t-\varepsilon}^H = \int_0^{t-\varepsilon} \klr{K_H(t,s) - K_H(t-\varepsilon,s)} \mathrm{d}W_s + \int_{t-\varepsilon}^t K_H(t,s) \mathrm{d}W_s \,.
    \end{align*}
    Therefore we can express $Y(\varepsilon)$ as
    \begin{align*}
        Y(\varepsilon) = \xi + A \kle{a_1 \int_{t-\varepsilon}^t K_{H_1}(T,s) \mathrm{d}W_s + a_2 \int_{t-\varepsilon}^t K_{H_2}(t,s) \mathrm{d}W_s} \,,
    \end{align*}
    where $\xi$ is defined in equation \eqref{eq:def-xi} and measurable with respect to $\mathcal{A}_{t-\varepsilon}$. Therefore we end up with
    \begin{align*}
        &\mathbb{E}[\exp\klr{i\kla{u,Y(\varepsilon)}} | \mathcal{A}_{t-\varepsilon}] \\
        &\quad = \exp\klr{i\kla{u,\xi}} \mathbb{E}\kle{\exp\klr{i\kla{u,A \int_{t-\varepsilon}^t \klr{a_1 K_{H_1}(t,s) + a_2 K_{H_2}(t,s)} \mathrm{d}W_s}} | \mathcal{A}_{t-\varepsilon}} \\
        &\quad = \exp\klr{i\kla{u,\xi} - \frac{|A^Tu|^2}{2} \int_{t-\varepsilon}^t \klr{a_1 K_{H_1}(t,s) + a_2 K_{H_2}(t,s)}^2 \mathrm{d}s} \,.
    \end{align*}
\end{proof}
By extending the previous proof, one can show the following remark:
\begin{remark}\label{remark:YY-cond-gaussian}
    Fix $a_1,a_2,a_1',a_2' \in \mathbb{R}\setminus\klg{0}$ and $A,A' \in \mathbb{R}^{d\times d}$ invertible. Let $X'$ satisfy the SDE \eqref{eq:sde} with diffusion coefficients $a_1'A'$ and $a_2'A'$, i.e.,
    \begin{align*}
        X_t' = x_0 + \int_0^t b(s,X_s') \mathrm{d}s + A' \klr{a_1' B_t^{H_1} + a_2' B_t^{H_2}}\,, \quad t \in [0,T]\,.
    \end{align*}
    Define $Y'(\varepsilon)$, $\xi'$ and $(\eta')^2$ analogously to equations \eqref{eq:definition-Y}, \eqref{eq:def-xi} and \eqref{eq:variance-conditionally-gaussian-y}. Furthermore, let
    \begin{align*}
        \lambda \coloneqq A(A')^T\int_{t-\varepsilon}^t \klr{a_1 a_1' \klr{K_{H_1}(t,s)}^2 + \klr{a_1'a_2 + a_1a_2'} K_{H_1}(t,s) K_{H_2}(t,s) + a_2 a_2' \klr{K_{H_2}(t,s)}^2} \mathrm{d}s \,.
    \end{align*}
    Then, given $\mathcal{A}_{t-\varepsilon}$, the random vector $(Y_t(\varepsilon),Y_t'(\varepsilon))$, ${t \in [0,T]}$, is conditionally Gaussian with mean $(\xi,\xi')$ and covariance block matrix
    \begin{align*}
        \Sigma = \matr{\eta^2 & \lambda\\ \lambda & (\eta')^2} \,.
    \end{align*}
\end{remark}
\subsection{Existence of density}
One consequence of the properties of the CGP is the existence of a density with respect to the Lebesgue measure. A key ingredient is the following lemma, a proof is given in \cite{fournier2010absolute}.
\begin{lemma}
\label{lemma:mu-density-existence}
    Let $\rho \colon \mathbb{R}^d \to [0,\infty)$ be a continuous function and $\delta \in (0,\infty)$. We write $D_\delta = \klg{x \in \mathbb{R}^d; \rho(x) \le \delta}$ and define a function $h_\delta \colon \mathbb{R}^d \to [0,\delta]$ with
    \begin{align*}
        h_\delta(x) = (\inf\klg{|x-z|; z \in D_\delta}) \land \delta\,, \quad x \in \mathbb{R}^d \,,
    \end{align*}
    where we use the convention that $\inf\klg{|x-z|; z \in D_\delta} = 0$ if $D_\delta = \emptyset$. Then:
    \begin{enumerate}
        \item[(a)] $h_\delta$ is vanishing on $D_\delta$, positive on $\mathbb{R}^d \setminus D_\delta$ and globally Lipschitz continuous with Lipschitz constant $1$.
        \item[(b)] For a probability measure $\mu$ on $\mathbb{R}^d$, if for some $\delta > 0$ the measure $\mu_\delta$ given by $\mathrm{d}\mu_\delta/\mathrm{d}\mu = h_\delta$ admits a density then $\mu$ has a density on the set $\klg{x \in \mathbb{R}^d: \rho(x) > 0}$. 
    \end{enumerate}
\end{lemma}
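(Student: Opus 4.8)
The statement splits into two essentially independent pieces: (a) is a list of elementary properties of $h_\delta$, all flowing from standard facts about the distance function to a closed set, and (b) is a short change-of-density computation; I would prove them in that order. For (a), first note that $D_\delta=\rho^{-1}([0,\delta])$ is closed because $\rho$ is continuous (I will assume $D_\delta\neq\emptyset$, the only relevant case; if $D_\delta=\emptyset$ the stated convention forces $h_\delta\equiv 0$). Writing $d(x):=\inf\{|x-z|:z\in D_\delta\}$ for the distance to $D_\delta$, I would recall the two standard facts: (i) $d$ is $1$-Lipschitz, obtained from $d(x)\le|x-z|\le|x-y|+|y-z|$ by taking the infimum over $z\in D_\delta$ and then symmetrizing; and (ii) $d(x)=0$ if and only if $x$ lies in the closure of $D_\delta$, i.e.\ in $D_\delta$ itself. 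Since $h_\delta=d\wedge\delta$ and the truncation $s\mapsto s\wedge\delta$ is nondecreasing, $1$-Lipschitz, maps $[0,\infty)$ into $[0,\delta]$ and fixes $0$, it follows at once that $h_\delta\colon\mathbb{R}^d\to[0,\delta]$ is $1$-Lipschitz, that $h_\delta=0$ exactly on $D_\delta$, and that $h_\delta>0$ on $\mathbb{R}^d\setminus D_\delta$, which is (a).

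\textbf{Part (b).} Suppose $\mu_\delta$ has a Lebesgue density $f_\delta\ge 0$; by definition of $\mu_\delta$ one also has $\mathrm{d}\mu_\delta=h_\delta\,\mathrm{d}\mu$. The key step is that these two representations together force absolute continuity of $\mu$ on $\{h_\delta>0\}$: for any Borel set $N$ with Lebesgue measure zero,
\[
\int_N h_\delta\,\mathrm{d}\mu=\mu_\delta(N)=\int_N f_\delta(x)\,\mathrm{d}x=0,
\]
and since $h_\delta\ge 0$ this yields $\mu(N\cap\{h_\delta>0\})=0$. By part (a), $\{h_\delta>0\}=\mathbb{R}^d\setminus D_\delta=\{\rho>\delta\}$, so $\mu$ restricted to $\{\rho>\delta\}$ is absolutely continuous with respect to Lebesgue measure — indeed it has the explicit density $f_\delta/h_\delta$ there, finite Lebesgue-a.e.\ because $h_\delta>0$ on that set. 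Finally, writing $\{\rho>0\}=\bigcup_{n\ge 1}\{\rho>\delta_n\}$ for a sequence $\delta_n\downarrow 0$ and applying the preceding step with $\delta_n$ in place of $\delta$ (which is available in the situation in which the lemma is used), one obtains $\mu(N\cap\{\rho>0\})=\lim_n\mu(N\cap\{\rho>\delta_n\})=0$ for every Lebesgue-null $N$; that is, $\mu$ has a density on $\{\rho>0\}$.

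\textbf{Expected difficulty.} There is no real analytic obstacle here. The only point that needs care is the measure-theoretic bookkeeping in (b) — exploiting $\mathrm{d}\mu_\delta=h_\delta\,\mathrm{d}\mu$ and $\mathrm{d}\mu_\delta=f_\delta\,\mathrm{d}x$ simultaneously to transfer absolute continuity onto $\{h_\delta>0\}$ — together with keeping track of the fact that a single $\delta$ controls $\mu$ only on $\{\rho>\delta\}$, so that reaching all of $\{\rho>0\}$ requires exhausting it by the super-level sets $\{\rho>\delta_n\}$.
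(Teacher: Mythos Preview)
The paper does not give its own proof of this lemma; it simply cites \cite{fournier2010absolute}. Your argument for (a) is correct and standard: closedness of $D_\delta$, the $1$-Lipschitz property of the distance function, and the fact that truncation by $\delta$ preserves all the claimed properties are exactly the right ingredients.

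For (b), your core step --- equating the two representations $\mathrm{d}\mu_\delta=h_\delta\,\mathrm{d}\mu$ and $\mathrm{d}\mu_\delta=f_\delta\,\mathrm{d}x$ on Lebesgue-null sets to deduce $\mu(N\cap\{h_\delta>0\})=0$ --- is correct and yields a density for $\mu$ on $\{h_\delta>0\}=\{\rho>\delta\}$. You then pass to $\{\rho>0\}$ by exhausting with $\delta_n\downarrow 0$, and you are right to flag that this step requires the hypothesis for \emph{each} $\delta_n$, not just the single $\delta$ granted by the lemma as stated. Indeed, the literal statement (``for some $\delta$'') only yields a density on $\{\rho>\delta\}$: one can place an atom of $\mu$ at a point $x_0$ with $0<\rho(x_0)\le\delta$ without destroying absolute continuity of $\mu_\delta$, since $h_\delta(x_0)=0$ kills the atom. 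So your reading is in fact more precise than the statement you were asked to prove. In the paper's only application (Theorem~\ref{thm:existence-density}) one takes $\rho\equiv 1$, so for any $\delta<1$ the sets $\{\rho>\delta\}$ and $\{\rho>0\}$ both equal $\mathbb{R}^d$ and the distinction disappears; your argument is therefore entirely adequate for what the paper needs.
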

For any $h \in \mathbb{R}^d$ the $m$th order difference operator $\Delta_h^m$ is defined as
\begin{align*}
    \Delta_h f(x) = f(x+h) - f(x)\,, \qquad \Delta_h^m f(x) = \Delta_h\klr{\Delta_h^{m-1}f}(x) \,.
\end{align*}
For $m \in \nat$ and $0 < \gamma < m$, let $\mathcal{C}_b^\gamma(\mathbb{R}^d)$ be the Zygmund space of order $\gamma$ defined as the closure of bounded smooth functions with respect to
\begin{align*}
    \norm{f}_{\mathcal{C}^\gamma_b} \coloneqq \norm{f}_\infty + \sup_{|h|\le 1} \frac{\norm{\Delta^m_h f}_\infty}{|h|^\gamma} \,.
\end{align*}
In the following, we distinguish between the Hölder space of order $\gamma$, $C^\gamma_b(\mathbb{R}^d)$, and the Zygmund space of order $\gamma$, $\mathcal{C}^\gamma_b(\mathbb{R}^d)$, via a calligraphic letter. As it has been shown in \cite{triebel1992theory}, note that $\mathcal{C}^\gamma_b(\mathbb{R}^d) \subset C^\gamma_b(\mathbb{R}^d)$. Moreover, we have the following lemma.
\begin{lemma}\label{remark:relation-difference-operator-and-zygmund-norm}
    For any $m \in \nat$, $\gamma \in (0,m)$, $h \in \mathbb{R}^d$ with $|h| < 1$, $x \in \mathbb{R}^d$ and $\phi \in \mathcal{C}_b^\gamma(\mathbb{R}^d)$ we have
    \begin{align*}
        |\Delta_h^m \phi(x)| \le \norm{\phi}_{\mathcal{C}_b^\gamma} |h|^\gamma \,.
    \end{align*}
\end{lemma}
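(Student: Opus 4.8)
The plan is to peel the statement back to the definition of the Zygmund seminorm and then extend from smooth functions to the completion by a routine density argument; there is no serious obstacle here, only a mild point about realizing $\mathcal{C}_b^\gamma(\mathbb{R}^d)$ as a genuine space of functions on which a pointwise evaluation like $\Delta_h^m\phi(x)$ makes sense.

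First I would treat the case where $\phi$ is bounded and smooth, so that $\Delta_h^m\phi$ is again a bounded continuous function for every $h$. Then for any $h \in \mathbb{R}^d$ with $0 < |h| < 1$ and any $x \in \mathbb{R}^d$,
\[
    |\Delta_h^m\phi(x)| \le \norm{\Delta_h^m\phi}_\infty = |h|^\gamma \cdot \frac{\norm{\Delta_h^m\phi}_\infty}{|h|^\gamma} \le |h|^\gamma \sup_{0 < |g| \le 1} \frac{\norm{\Delta_g^m\phi}_\infty}{|g|^\gamma} \le |h|^\gamma \norm{\phi}_{\mathcal{C}_b^\gamma},
\]
where the last inequality merely discards the nonnegative term $\norm{\phi}_\infty$ from $\norm{\phi}_{\mathcal{C}_b^\gamma}$; for $h = 0$ both sides vanish, so the asserted bound holds for all $|h| < 1$.

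It then remains to pass from bounded smooth functions to a general $\phi \in \mathcal{C}_b^\gamma(\mathbb{R}^d)$. By definition of the Zygmund space there is a sequence $(\phi_n)_{n\in\N}$ of bounded smooth functions with $\norm{\phi_n - \phi}_{\mathcal{C}_b^\gamma} \to 0$; since $\norm{\cdot}_\infty \le \norm{\cdot}_{\mathcal{C}_b^\gamma}$, the convergence $\phi_n \to \phi$ also holds uniformly, and $\norm{\phi_n}_{\mathcal{C}_b^\gamma} \to \norm{\phi}_{\mathcal{C}_b^\gamma}$. Uniform convergence gives $\Delta_h^m\phi_n(x) \to \Delta_h^m\phi(x)$ pointwise, the difference operator being a finite linear combination of translations. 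Applying the first step to each $\phi_n$ and letting $n \to \infty$ yields $|\Delta_h^m\phi(x)| \le \norm{\phi}_{\mathcal{C}_b^\gamma}|h|^\gamma$. The step deserving a word of care — and the closest thing to an obstacle — is the implicit identification of the abstract completion $\mathcal{C}_b^\gamma(\mathbb{R}^d)$ with a space of functions: this is exactly the content of the embedding $\mathcal{C}_b^\gamma(\mathbb{R}^d) \hookrightarrow C_b^\gamma(\mathbb{R}^d) \subset C_b(\mathbb{R}^d)$ recalled above from \cite{triebel1992theory}, which ensures that the uniform limit of the $\phi_n$ is the function $\phi$ itself, so that both sides of the displayed inequality genuinely converge. (If one reads the lemma with an $m$ other than the one fixed in the norm, the same computation applies after invoking the standard equivalence of Zygmund seminorms of different orders, at the cost of an extra absolute constant.)
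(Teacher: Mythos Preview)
Your proof is correct and follows essentially the same route as the paper: the core is the chain $|\Delta_h^m\phi(x)| \le \norm{\Delta_h^m\phi}_\infty \le |h|^\gamma \sup_{|h'|\le 1}\norm{\Delta_{h'}^m\phi}_\infty/|h'|^\gamma \le |h|^\gamma\norm{\phi}_{\mathcal{C}_b^\gamma}$. The paper simply writes this chain directly for a generic $\phi\in\mathcal{C}_b^\gamma(\mathbb{R}^d)$ without the density step you add, implicitly treating elements of the closure as functions for which the seminorm is finite; your version is more careful about the completion, but the argument is otherwise identical.
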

\begin{proof}
    We start with the definition of the Zygmund norm and multiply it by $|h|^\gamma$, we obtain
    \begin{align*}
        \norm{\phi}_{\mathcal{C}^\gamma_b} |h|^\gamma \ge |h|^\gamma \sup_{|h'| \le 1} \frac{\norm{\Delta_{h'}^m \phi}_\infty}{|h'|^\gamma} \ge \norm{\Delta_h^m \phi}_\infty \ge |\Delta_h^m \phi(x)| \,,
    \end{align*}
    where we have used the definition of the supremum.
\end{proof}
Furthermore, we need the following lemma that has been established in \cite{romito2018simple}.
\begin{lemma}
\label{lemma:mu-density-wrt-lebesgue}
    Let $\mu$ be a finite measure on $\mathbb{R}^d$. Assume that there exist $m \in \nat$, $\gamma \in (0,\infty)$, $s \in (\gamma,m)$ and a constant $K \in (0,\infty)$ such that for all $\phi \in \mathcal{C}_{b}^\gamma(\mathbb{R}^d)$ and $h \in \mathbb{R}^d$ with $|h| \le 1$,
    \begin{align*}
        \kls{\int_{\mathbb{R}^d} \Delta^m_h \phi(x) \mathrm{d}\mu(x)} \le K |h|^s \norm{\phi}_{\mathcal{C}^\gamma_{\mathrm{b}}} \,,
    \end{align*}
    then $\mu$ has a density with respect to the Lebesgue measure on $\mathbb{R}^d$.
\end{lemma}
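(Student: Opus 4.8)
The plan is to deduce, from the difference estimate in the hypothesis, a summable decay bound for the Littlewood--Paley blocks of $\mu$ measured in $L^1$, along the lines of \cite{romito2018simple}. First I would fix a standard dyadic partition of unity in frequency: a real, even Schwartz function $\psi$ with $\widehat\psi$ supported in an annulus $\{c_1\le|\xi|\le c_2\}$, $0<c_1<c_2<2\pi$, and let $P_jf=\psi_j*f$ with $\psi_j(x)=2^{jd}\psi(2^jx)$ for $j\ge0$, completed by a low-frequency block $P_{-1}$. Since $\mu$ is a finite measure, $\mu=\sum_{j\ge-1}P_j\mu$ in $\mathcal{S}'(\mathbb{R}^d)$, each $P_j\mu=\psi_j*\mu$ lies in $L^1(\mathbb{R}^d)$ and $\|P_{-1}\mu\|_{L^1}<\infty$; hence it suffices to prove $\sum_{j\ge0}\|P_j\mu\|_{L^1}<\infty$, for then the partial sums of $\sum_jP_j\mu$ are Cauchy in $L^1$, their limit equals $\mu$ in $\mathcal{S}'$, and $\mu$ has an $L^1$-density.

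The heart of the argument is to bound $\|P_j\mu\|_{L^1}$ using the hypothesis. For $\theta\in C_c^\infty(\mathbb{R}^d)$ with $\|\theta\|_\infty\le1$, Fubini and the evenness of $\psi_j$ give $\int P_j\mu(x)\,\theta(x)\,dx=\int(\psi_j*\theta)\,d\mu$, so $\|P_j\mu\|_{L^1}=\sup_\theta|\int(\psi_j*\theta)\,d\mu|$, and the task is to write $\psi_j*\theta$ as a finite sum of $m$-th order differences of functions with controlled Zygmund norm. In frequency I would split $\widehat\psi$ with a smooth partition of unity $\chi_1,\dots,\chi_d$ adapted to the $d$ coordinate directions: on the annulus one has $|\xi_i|\ge c_1/\sqrt{d}$ for some $i$, so $\chi_i$ can be taken supported where $|\xi_i|>c_1/(2\sqrt{d})$; since $c_2<2\pi$, the Fourier multiplier $(e^{i\xi_i}-1)^m$ of $\Delta_{e_i}^m$ is bounded away from $0$ on $\operatorname{supp}\chi_i$, so $\Phi_i:=\chi_i\widehat\psi/(e^{i\xi_i}-1)^m$ is smooth and compactly supported with $\sum_i(e^{i\xi_i}-1)^m\Phi_i=\widehat\psi$. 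Rescaling by $2^{-j}$ turns this into the identity $\psi_j=\sum_{i=1}^d\Delta_{2^{-j}e_i}^mG_{i,j}$ with $G_{i,j}(x)=2^{jd}G_i(2^jx)$, $G_i:=\mathcal{F}^{-1}\Phi_i\in\mathcal{S}(\mathbb{R}^d)$, whence
\begin{align*}
\int(\psi_j*\theta)\,d\mu=\sum_{i=1}^d\int\Delta_{2^{-j}e_i}^m\phi_{i,j}\,d\mu,\qquad \phi_{i,j}:=G_{i,j}*\theta\in\mathcal{C}_b^\gamma(\mathbb{R}^d).
\end{align*}

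It then remains to estimate $\|\phi_{i,j}\|_{\mathcal{C}_b^\gamma}$ and sum the geometric series. For this I would use a Bernstein-type bound: a bounded function $f$ with spectrum contained in $\{|\xi|\le R\}$, $R\ge1$, satisfies $\|f\|_{\mathcal{C}_b^\gamma}\le C(d,m,\gamma)\,R^\gamma\|f\|_\infty$ --- write $f=f*\chi_R$ for a Schwartz $\chi_R$ equal to $1$ on the spectrum, so $\Delta_h^mf=f*\Delta_h^m\chi_R$ and $\|\Delta_h^m\chi_R\|_{L^1}\le C\min\{1,(R|h|)^m\}\le C(R|h|)^\gamma$ for $|h|\le1$, using $m>\gamma$. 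Since $\phi_{i,j}$ has spectrum in $\{|\xi|\le c_22^j\}$ and $\|\phi_{i,j}\|_\infty\le\|G_i\|_{L^1}$, this gives $\|\phi_{i,j}\|_{\mathcal{C}_b^\gamma}\le C\,2^{j\gamma}$; applying the hypothesis with $h=2^{-j}e_i$ then yields $|\int(\psi_j*\theta)\,d\mu|\le C\,K\,2^{-j(s-\gamma)}$ uniformly in $\theta$, i.e. $\|P_j\mu\|_{L^1}\le C\,K\,2^{-j(s-\gamma)}$, and since $s>\gamma$ the series over $j\ge0$ converges. The main obstacle I anticipate is the algebraic step of decomposing the Littlewood--Paley multiplier into finite-difference multipliers with constants uniform in $j$ (producing the $\chi_i$ and $\Phi_i$), together with the Bernstein-type estimate for the Zygmund norm; the remaining steps are routine. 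A minor technical point is that the test functions $G_{i,j}*\theta$ must genuinely lie in $\mathcal{C}_b^\gamma$ --- hence the convenience of taking $\theta\in C_c^\infty$ (still norming for the $L^1$-norm), which makes $G_{i,j}*\theta$ Schwartz.
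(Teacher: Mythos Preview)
The paper does not supply its own proof of this lemma; it merely cites \cite{romito2018simple}. Your proposal reproduces exactly Romito's strategy --- Littlewood--Paley decomposition of $\mu$, the algebraic factorisation of each block $\psi_j$ into a sum of $m$-th order finite differences via the multiplier identity $\widehat\psi=\sum_i(e^{i\xi_i}-1)^m\Phi_i$, a Bernstein-type bound $\|\phi_{i,j}\|_{\mathcal{C}_b^\gamma}\lesssim 2^{j\gamma}$ for band-limited functions, and summation of the resulting geometric series $\sum_j 2^{-j(s-\gamma)}$ --- and the details you fill in (choosing $c_2<2\pi$ so that $(e^{i\xi_i}-1)^m$ is bounded away from zero on each $\chi_i$-piece, the $L^1$-invariance $\|G_{i,j}\|_{L^1}=\|G_i\|_{L^1}$ under dyadic rescaling, duality against $\theta\in C_c^\infty$ with $\|\theta\|_\infty\le1$ to norm $\|P_j\mu\|_{L^1}$) are all correct. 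Your argument is sound and matches the cited source; there is nothing further to compare against in the paper itself.
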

\begin{theorem}[Existence of density]\label{thm:existence-density}
    If $b$ is bounded, for all $t \in (0,T]$, the law of $X_t$, i.e.\ the solution of equation \eqref{eq:sde}, admits a density with respect to the Lebesgue measure on $\mathbb{R}^d$.
\end{theorem}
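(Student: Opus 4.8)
The plan is to realise $X_t$ as a small perturbation of the conditionally Gaussian process $Y(\varepsilon)$ from \eqref{eq:definition-Y} and to verify the hypothesis of Lemma~\ref{lemma:mu-density-wrt-lebesgue} for $\mu := \mathrm{Law}(X_t)$. Fix $\gamma \in (0,1)$ and an integer $m > \gamma/(1-H)$, where $H := \min(H_1,H_2)$; the choice of $m$ will only matter at the very end. For $\phi \in \mathcal{C}_b^\gamma(\mathbb{R}^d)$ and $h \in \mathbb{R}^d$ with $|h| \le 1$ I would split
\[
    \mathbb{E}\kle{\Delta_h^m \phi(X_t)} = \mathbb{E}\kle{\Delta_h^m \phi(Y(\varepsilon))} + \mathbb{E}\kle{\Delta_h^m \phi(X_t) - \Delta_h^m \phi(Y(\varepsilon))} \,,
\]
treating $\varepsilon \in (0,t)$ as a free parameter to be optimised at the end, and estimate the two terms separately.

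For the first term I would condition on $\mathcal{A}_{t-\varepsilon}$. By Lemma~\ref{lemma:Y-conditionally-gaussian}, given $\mathcal{A}_{t-\varepsilon}$ the vector $Y(\varepsilon)$ is Gaussian with mean $\xi$ and covariance $\eta^2 = AA^T \sigma_\varepsilon^2$ (see \eqref{eq:variance-conditionally-gaussian-y}), where $\sigma_\varepsilon^2 := \int_{t-\varepsilon}^t \klr{a_1 K_{H_1}(t,s) + a_2 K_{H_2}(t,s)}^2 \mathrm{d}s$; since $A$ is invertible and $\sigma_\varepsilon^2 > 0$ (see the final step), this conditional law has the smooth Gaussian density $p_{\xi,\eta^2}$. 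Shifting the difference operator onto the density by a change of variables gives $\mathbb{E}\kle{\Delta_h^m\phi(Y(\varepsilon)) \mid \mathcal{A}_{t-\varepsilon}} = \int_{\mathbb{R}^d} \phi(w)\,\klr{\Delta_{-h}^m p_{0,\eta^2}}(w-\xi)\,\mathrm{d}w$, so that $\kls{\mathbb{E}\kle{\Delta_h^m\phi(Y(\varepsilon)) \mid \mathcal{A}_{t-\varepsilon}}} \le \norm{\phi}_\infty \norm{\Delta_{-h}^m p_{0,\eta^2}}_{L^1}$. A routine scaling argument for the Gaussian density then yields $\norm{\Delta_{-h}^m p_{0,\eta^2}}_{L^1} \le C_{m,d}\,\klr{\norm{A^{-1}}\,\sigma_\varepsilon^{-1}}^m |h|^m$, whence, after taking expectations,
\[
    \kls{\mathbb{E}\kle{\Delta_h^m\phi(Y(\varepsilon))}} \le C_{m,d}\,\norm{A^{-1}}^m\, \sigma_\varepsilon^{-m}\,|h|^m\,\norm{\phi}_\infty \,.
\]

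For the second term I would use that $z \mapsto \Delta_h^m\phi(z)$ is $\gamma$-Hölder with seminorm bounded by $C\norm{\phi}_{\mathcal{C}_b^\gamma}$ (via the embedding $\mathcal{C}_b^\gamma \subset C_b^\gamma$ for non-integer $\gamma$), together with $|X_t - Y(\varepsilon)| \le \norm{b}_\infty\varepsilon$ almost surely, which is immediate from \eqref{remark:expression-CGP} (and of which Lemma~\ref{lemma:X-Y-expectation} is a consequence). This gives $\kls{\mathbb{E}\kle{\Delta_h^m\phi(X_t) - \Delta_h^m\phi(Y(\varepsilon))}} \le C\norm{\phi}_{\mathcal{C}_b^\gamma}\,\norm{b}_\infty^\gamma\,\varepsilon^\gamma$. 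Combining the two bounds,
\[
    \kls{\mathbb{E}\kle{\Delta_h^m\phi(X_t)}} \le C\,\norm{\phi}_{\mathcal{C}_b^\gamma}\,\klr{\sigma_\varepsilon^{-m}|h|^m + \varepsilon^\gamma} \,.
\]

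The crux is a quantitative lower bound on $\sigma_\varepsilon$ for small $\varepsilon$. I would prove that there exist $c_0 > 0$ and $\delta_0 \in (0,t)$ with $\sigma_\varepsilon^2 \ge c_0\,\varepsilon^{2H}$ for all $\varepsilon \in (0,\delta_0]$ (which in particular gives $\sigma_\varepsilon^2 > 0$ for every $\varepsilon \in (0,t)$). This rests on the behaviour of the Volterra kernel near the diagonal: from \eqref{eq:covariance-kernel} the hypergeometric factor tends to $1$ as $s \uparrow t$, so $K_H(t,s) = \Gamma(H+1/2)^{-1}(t-s)^{H-1/2}(1+o(1))$; if, say, $H_1 < H_2$, the term $a_1 K_{H_1}(t,s)$ dominates $a_2 K_{H_2}(t,s)$ as $s \uparrow t$ (the upper bound \eqref{eq:cov-bound-1} on $K_{H_2}$ makes this precise), so there is $\delta_0$ with $\kls{a_1 K_{H_1}(t,s) + a_2 K_{H_2}(t,s)} \ge c\,(t-s)^{H-1/2}$ on $(t-\delta_0,t)$, hence $\sigma_\varepsilon^2 \ge \tfrac{c^2}{2H}\varepsilon^{2H}$. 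Inserting $\sigma_\varepsilon^{-1} \le c_0^{-1/2}\varepsilon^{-H}$ and choosing $\varepsilon = |h|^{m/(mH+\gamma)}$ for $|h|$ small enough that $\varepsilon \le \delta_0$ (the remaining range of $h$ being absorbed into the constant via the trivial bound $\kls{\mathbb{E}[\Delta_h^m\phi(X_t)]} \le 2^m\norm{\phi}_\infty$) balances the two terms and yields $\kls{\mathbb{E}[\Delta_h^m\phi(X_t)]} \le K\,|h|^s\,\norm{\phi}_{\mathcal{C}_b^\gamma}$ for all $|h| \le 1$, with $s = m\gamma/(mH+\gamma)$. Since $s < m$ always and $s > \gamma$ exactly because $m > \gamma/(1-H)$, we have $s \in (\gamma,m)$, and Lemma~\ref{lemma:mu-density-wrt-lebesgue} gives that $\mathrm{Law}(X_t)$ admits a density with respect to the Lebesgue measure. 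The step I expect to be the main obstacle is the diagonal nondegeneracy estimate for $\sigma_\varepsilon$, in particular excluding cancellation between the two Volterra kernels; the remaining steps are routine manipulations of the difference operator and of the Gaussian density.
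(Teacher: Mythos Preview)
Your argument is correct and follows the same overall route as the paper---approximate $X_t$ by the conditionally Gaussian $Y(\varepsilon)$ and verify the smoothing criterion of Lemma~\ref{lemma:mu-density-wrt-lebesgue}---but it is tighter in two places. First, the paper threads the estimate through Lemma~\ref{lemma:mu-density-existence} with $\rho\equiv 1$ and the localiser $h_\delta$, producing a three-term decomposition (one term from $h_\delta(X_t)-h_\delta(X_{t-\varepsilon})$, bounded via the H\"older regularity of $X$); since $\rho\equiv 1$ renders $h_\delta$ essentially constant, that extra layer is inherited from the template of \cite{sonmez2020mixed} rather than needed here, and your direct two-term split is the natural simplification. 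Second, and more importantly, you make the $\varepsilon$-dependence of the Gaussian term explicit: you keep the factor $\sigma_\varepsilon^{-m}$, prove the nondegeneracy $\sigma_\varepsilon\gtrsim\varepsilon^{H}$ from the diagonal asymptotics of the Volterra kernels, and then optimise $\varepsilon=|h|^{m/(mH+\gamma)}$ under the constraint $m>\gamma/(1-H)$. The paper hides this dependence in the constant $M_3$ and then sets $\varepsilon=|h|^{2/\beta}$ with $\beta<H$, which does not visibly control the $\sigma_\varepsilon^{-m}$ blow-up; your bookkeeping closes this point cleanly. Your remark that the only nonroutine step is ruling out cancellation between the two kernels near the diagonal---resolved because the term with the smaller Hurst index dominates---is exactly right.
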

\begin{proof}
    The procedure is similar to the proof of Lemma 4.1 in \cite{sonmez2020mixed}. In the following, $M_1, M_2, \hdots$ denote some positive constants. Let $\rho(x) = 1$ and let $h_\delta$ be defined as described in Lemma \ref{lemma:mu-density-existence}. Then we apply Lemma \ref{lemma:mu-density-wrt-lebesgue} to show that the measure $\mu_\delta$ given by $\mathrm{d}\mu_\delta = h_\delta \mathrm{d}\mathbb{P}_{X_t}$ admits a density with respect to the Lebesgue measure, i.e.\ we need to find $m \in \nat$, $\gamma \in (0,\infty)$ and $s \in (\gamma,m)$ such that
    \begin{align*}
        \kls{\mathbb{E}\kle{h_\delta(X_t) \Delta_h^m \phi(X_t)}} \le K |h|^s \norm{\phi}_{\mathcal{C}_b^\gamma}
    \end{align*}
    for all $h$ with $|h|\le 1$, for all $\phi \in \mathcal{C}_b^\gamma$ and some constant $K \in (0,\infty)$. Using the Jenssen inequality, Lemma \ref{lemma:X-Y-expectation}, Lemma \ref{lemma:mu-density-existence} (a), Lemma \ref{remark:relation-difference-operator-and-zygmund-norm} and the Hölder continuity of Zygmund functions, for any $h \in \mathbb{R}^d$ with $|h| < 1$, $m \in \nat$, $\gamma \in (0,m)$ we get
    \begin{align*}
        \kls{\mathbb{E}\kle{h_\delta(X_t) \Delta_h^m \phi(X_t)}} &\le \kls{\mathbb{E}\kle{(h_\delta(X_t) - h_\delta(X_{t-\varepsilon})) \Delta_h^m \phi(X_t)}} \\
        &\quad + \kls{\mathbb{E}\kle{h_\delta(X_{t-\varepsilon})\klr{\Delta_h^m\phi(X_t) - \Delta_h^m \phi(Y(\varepsilon)}}} \\
        &\quad + \kls{\mathbb{E}\kle{h_\delta(X_{t-\varepsilon}) \Delta_h^m\phi(Y(\varepsilon)}} \\
        &\le M_1 \norm{\phi}_{\mathcal{C}^\gamma_b} |h|^\gamma \mathbb{E}\kle{|X_t - X_{t-\varepsilon}|} + M_2 \norm{\phi}_{\mathcal{C}_b^\gamma} \mathbb{E}\kle{|X_t - Y(\varepsilon)|}^\gamma \\
        &\quad + \kls{\mathbb{E}\kle{h_\delta(X_{t-\varepsilon}) \mathbb{E}\kle{\Delta_h^m \phi(Y(\varepsilon)) | \mathcal{A}_{t-\varepsilon}}}} \,.
    \end{align*}
    Now, according to Lemma \ref{lemma:Y-conditionally-gaussian}, $Y(\varepsilon)|\mathcal{A}_{t-\varepsilon} \sim \mathcal{N}(\xi,\eta^2)$ with mean $\xi$ defined in equation \eqref{eq:def-xi} and variance $\eta^2$ given in equation \eqref{eq:variance-conditionally-gaussian-y}.
    So, let $p_y$ be the density of the Gaussian distribution $\mathcal{N}(0,\eta^2)$. Then we have 
    \begin{align*}
        \mathbb{E}\kle{\Delta_h^m \phi(Y(\varepsilon)) |\mathcal{A}_{t-\varepsilon}} &= \int_{\mathbb{R}^d} (\Delta_h^m \phi)(z+\xi) p_y(z) \mathrm{d}z \\
        &= \int_{\mathbb{R}^d} \phi(\xi + z) (\Delta_{-h}^m p_y)(z) \mathrm{d}z \,.
    \end{align*}
    Since according to the mean-value theorem for higher order differences we have
    \begin{align*}
        |(\Delta_{-h}^m p_y)(z)| \le |h|^m \sup_{\xi \in \klg{z-th : t \in [0,m]}} \max_{|\alpha| = m} \kls{D^\alpha p_y(\xi)}\,,
    \end{align*}
    where $\alpha = (\alpha_1,\hdots,\alpha_d)$ is a multi-index with $|\alpha| = \alpha_1 + \hdots + \alpha_d$. We get
    \begin{align*}
        \kls{\mathbb{E}\kle{\Delta_h^m \phi(Y(\varepsilon)) |\mathcal{A}_{t-\varepsilon}}} &\le M_3 \norm{\phi}_\infty |h|^m \le M_3 \norm{\phi}_{\mathcal{C}_b^\gamma} |h|^m \,.
    \end{align*}
    Using Proposition \ref{thm:holder-continuity-XN} and Lemma \ref{lemma:X-Y-expectation} we end up with
    \begin{align*}
        \kls{\mathbb{E}\kle{h_\delta(X_t) \Delta_h^m \phi(X_t)}} &\le M_4 \norm{\phi}_{\mathcal{C}_b^\gamma} \klr{|h|^\gamma \varepsilon^{\beta} + \varepsilon^\gamma + |h|^m} 
    \end{align*}
    where $\beta \in (0,H_1)$. In order to have the conditions of Lemma \ref{lemma:mu-density-wrt-lebesgue} fulfilled, we choose $\gamma = 1$, $\varepsilon = |h|^{2/\beta}$ and $m=4$, i.e.\
    \begin{align*}
        \kls{\mathbb{E}\kle{h_\delta(X_t) \Delta_h^4 \phi(X_t)}} &\le M_5 \norm{\phi}_{\mathcal{C}_b^1} |h|^2
    \end{align*}
    where $s = 2 \in (1,4)$ as required.
    %
    %
    %
    %
    %
\end{proof}
%

\section{Gaussian-type bounds for density}\label{sec:gaussian-bounds}
Now we are in position to prove the existence of Gaussian-type lower and upper bounds for the density. 
\subsection{Representation of density with auxiliary SDE} First of all, let us find an expression for the density of the solution for the SDE \eqref{eq:sde} and adapt \cite[Proposition 3.4]{li2023non} to our setting.
\begin{proposition}
\label{prop:gaussian-like-formula}
    For $t\in [0,T]$ consider
    \begin{align}
        b_t \coloneqq b(t, x_0 + A(a_1B_t^{H_1} + a_2B_t^{H_2})) \,,
        \label{eq:definition-function-b-density-xtn}
    \end{align}
    where the function $b \colon [0,T] \times \mathbb{R}^d \to \mathbb{R}^d$ is the drift function in SDE \eqref{eq:sde}. Under Assumption \ref{assumpt:main-thm}, for each $T_0 > 0$ the density of the solution $X_{T_0}$ of the SDE \eqref{eq:sde} admits the expression
    \begin{align}
        p_{T_0}(x) = \frac{1}{\klr{(2\pi)^d \det(\Sigma)}^{1/2}} \exp\klr{-\frac{1}{2}(x-x_0)^T\Sigma^{-1}(x-x_0)} \Psi_{T_0}(x) \,,
        \label{eq:expression-density-xtn}
    \end{align}
    where $\Sigma = A \sigma^2 A^T$ with
    \begin{align}
        \sigma^2 \coloneqq \int_0^{T_0} \klr{ a_1 K_{H_1}({T_0},t) + a_2 K_{H_2}({T_0},t) }^2 \mathrm{d}t
        \label{eq:sigma-definition}
    \end{align}
    and
    \begin{align}
        \Psi_{T_0}(x) &\coloneqq \mathbb{E}_{\mathbb{P}}\kle{\exp\klr{\int_0^{T_0} \kla{\psi_t, \mathrm{d}W_t} - \frac{1}{2} \int_0^{T_0} |\psi_t|^2 \mathrm{d}t} | A(a_1 B_{T_0}^{H_1} + a_2 B_{T_0}^{H_2}) = x }\,,
    \end{align}
    where $\psi_t$ is defined as in the proof of Proposition \ref{prop:helper-weak-existence} for $h(t) = b_t$.
\end{proposition}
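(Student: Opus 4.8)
The plan is to remove the drift by the Girsanov change of measure constructed in Section~\ref{sec:2f-sde} and then to condition on the terminal value of the resulting driftless Gaussian driving process, adapting \cite[Proposition 3.4]{li2023non} to the correlated mixed setting.

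Fix $T_0 \in (0,T]$ and, running the construction of Section~\ref{sec:2f-sde} on $[0,T_0]$, let $\psi = (\psi_t)_{t \in [0,T_0]}$ be the process furnished by Proposition~\ref{prop:helper-weak-existence} for $h(t) = b_t$ as in \eqref{eq:definition-function-b-density-xtn}. By Proposition~\ref{prop:helper-weak-existence} and the proof of Theorem~\ref{thm:existence-weak-solution}, $\psi$ is $(\mathcal{A}_t)$-adapted, satisfies conditions \condref{cond:A1}{A1}--\condref{cond:A4}{A4}, and in particular the Novikov condition \eqref{eq:novikov-condition} holds, so that $L_{T_0} \coloneqq \exp\klr{\int_0^{T_0} \kla{\psi_t, \mathrm{d}W_t} - \frac{1}{2}\int_0^{T_0} |\psi_t|^2 \mathrm{d}t}$ has $\mathbb{E}_{\mathbb{P}}[L_{T_0}] = 1$. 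Putting $\mathrm{d}\mathbb{Q}/\mathrm{d}\mathbb{P} = L_{T_0}$ on $\mathcal{A}_{T_0}$, Theorem~\ref{thm:girsanov-two-fbm} together with the computation carried out in the proof of Theorem~\ref{thm:existence-weak-solution} shows that on $(\Omega,(\mathcal{A}_t)_{t\in[0,T_0]},\mathbb{Q})$ the process $t\mapsto x_0 + A(a_1 B_t^{H_1}+a_2 B_t^{H_2})$ is a weak solution of \eqref{eq:sde}. Writing $G_{T_0}\coloneqq A(a_1 B_{T_0}^{H_1}+a_2 B_{T_0}^{H_2})$ and invoking uniqueness in law (Theorem~\ref{thm:uniqueness-law}), for every bounded measurable $f\colon\mathbb{R}^d\to\mathbb{R}$ one then has
\begin{align*}
    \mathbb{E}[f(X_{T_0})] = \mathbb{E}_{\mathbb{Q}}\kle{f(x_0 + G_{T_0})} = \mathbb{E}_{\mathbb{P}}\kle{L_{T_0}\,f(x_0 + G_{T_0})} .
\end{align*}

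Next I would condition on $G_{T_0}$. By \eqref{eq:fbms-wrt-brownian}, $G_{T_0} = A\int_0^{T_0}\klr{a_1 K_{H_1}(T_0,t)+a_2 K_{H_2}(T_0,t)}\mathrm{d}W_t$ is a centered Gaussian vector with covariance $\Sigma = A\sigma^2 A^T = \sigma^2 AA^T$, where $\sigma^2$ is the scalar \eqref{eq:sigma-definition}; here $\sigma^2 < \infty$ because each $K_{H_i}(T_0,\cdot) \in L^2([0,T_0])$ (the fBM being a well-defined Gaussian integral), and $\sigma^2 > 0$ because, since $a_1,a_2\neq 0$ and $H_1\neq H_2$, the function $a_1 K_{H_1}(T_0,\cdot)+a_2 K_{H_2}(T_0,\cdot)$ does not vanish in $L^2([0,T_0])$ (the two kernels are not proportional), so $\Sigma$ is invertible since $A$ is. Hence $x_0 + G_{T_0}$ has the Gaussian density $q(x) = \klr{(2\pi)^d\det\Sigma}^{-1/2}\exp\klr{-\frac{1}{2}(x-x_0)^T\Sigma^{-1}(x-x_0)}$. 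Since $f(x_0 + G_{T_0})$ is $\sigma(G_{T_0})$-measurable, picking a Borel version $\Phi$ of the regular conditional expectation $y\mapsto \mathbb{E}_{\mathbb{P}}[L_{T_0}\mid G_{T_0}=y]$ (which exists, is nonnegative, and integrates to $1$ against the law of $G_{T_0}$), the tower property together with disintegration of the law of $G_{T_0}$ and the substitution $x = x_0+y$ yields
\begin{align*}
    \mathbb{E}[f(X_{T_0})] = \int_{\mathbb{R}^d} f(x)\, q(x)\, \mathbb{E}_{\mathbb{P}}\kle{L_{T_0}\mid x_0 + G_{T_0}=x}\,\mathrm{d}x .
\end{align*}
As $f$ is an arbitrary bounded measurable function and the density $p_{T_0}$ of $X_{T_0}$ exists by Theorem~\ref{thm:existence-density}, this forces $p_{T_0}$ to coincide almost everywhere with the integrand, which is exactly the right-hand side of \eqref{eq:expression-density-xtn}, with $\Psi_{T_0}(x) = \mathbb{E}_{\mathbb{P}}[L_{T_0}\mid x_0 + G_{T_0}=x]$ (the conditioning event in \eqref{eq:expression-density-xtn} being read as $\{x_0 + A(a_1 B_{T_0}^{H_1}+a_2 B_{T_0}^{H_2})=x\}$).

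I expect the work to be essentially bookkeeping rather than conceptual. The one point needing explicit care is that the change-of-measure apparatus of Section~\ref{sec:2f-sde} applies verbatim with $T$ replaced by $T_0$, so that $L_{T_0}$ is genuinely a probability density and the driftless process really solves \eqref{eq:sde} on $[0,T_0]$; beyond that, the disintegration step, although standard, should be spelled out carefully so that $\Psi_{T_0}$ is well defined for Lebesgue-almost every $x$. Recording that $\sigma^2\in(0,\infty)$, hence that $\Sigma$ is invertible, is what certifies that the Gaussian prefactor in \eqref{eq:expression-density-xtn} is a genuine density.
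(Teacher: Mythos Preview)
Your proposal is correct and follows essentially the same route as the paper: remove the drift via the Girsanov transformation of Section~\ref{sec:2f-sde}, recognize the terminal value of the driftless process as the Gaussian $\mathcal{N}(x_0,\Sigma)$, and disintegrate by conditioning on it. The only cosmetic difference is that the paper changes measure to $\mathbb{Q}$, conditions there on $A(a_1\tilde{B}_{T_0}^{H_1}+a_2\tilde{B}_{T_0}^{H_2})$, and then tacitly identifies the resulting $\mathbb{E}_{\mathbb{Q}}[\,\cdot\mid\tilde{B}\,]$ with $\mathbb{E}_{\mathbb{P}}[\,\cdot\mid B\,]$ via equality in law, whereas you invoke uniqueness in law (Theorem~\ref{thm:uniqueness-law}) up front to revert to $\mathbb{P}$ before conditioning---this makes the appearance of $\mathbb{E}_{\mathbb{P}}$ and the original $B^{H_i}$ in the formula for $\Psi_{T_0}$ immediate and also catches the $x_0$-shift in the conditioning event that the paper's statement leaves implicit.
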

\begin{proof}
    According to Theorem \ref{thm:existence-density} we know that there exists a density $p_{T_0}(x)$ of $X_{T_0}$ with respect to the Lebesgue measure on $\mathbb{R}^d$. Therefore, by definition of the expected value we get for any bounded Borel measurable function $f \colon \mathbb{R}^d \to \mathbb{R}$ 
    \begin{align}
        \int_{\mathbb{R}^d} f(x) p_{T_0}(x) \mathrm{d}x = \mathbb{E}_{\mathbb{P}}\kle{f(X_{T_0})} \,.
        \label{eq:exp-xtn-density}
    \end{align}
    In the proof of Theorem \ref{thm:uniqueness-law} we have already shown that according to Girsanov theorem there exists a probability measure $\mathbb{Q}$ s.t.\ the process
    \begin{align*}
        \Tilde{W}_t = W_t + \int_0^t \psi_s \mathrm{d}s\,, \quad t \in (0,T]\,,
    \end{align*}
    is an $(\mathcal{A}_t)_{t\in [0,T]}$-Brownian motion with respect to the measure $\mathbb{Q}$. Therefore, we can express the right-hand side of equation \eqref{eq:exp-xtn-density} with respect to the new measure $\mathbb{Q}$ as
    \begin{align*}
        \int_{\mathbb{R}^d} f(x) p_{T_0}(x) \mathrm{d}x &= \mathbb{E}_{\mathbb{Q}}\kle{f(X_{T_0}) \exp\klr{\int_0^{T_0} \kla{\psi_t, \mathrm{d}W_t} + \frac{1}{2} \int_0^{T_0} |\psi_t|^2 \mathrm{d}t}} \\
        &= \mathbb{E}_{\mathbb{Q}}\kle{f(X_{T_0}) \exp\klr{\int_0^{T_0} \kla{\psi_t, \mathrm{d}\Tilde{W}_t} - \frac{1}{2} \int_0^{T_0} |\psi_t|^2 \mathrm{d}t}}\,.
    \end{align*}
    Since $(\Tilde{W}_t)_{t \in [0,T]}$ is a $d$-dimensional Brownian motion with respect to the measure $\mathbb{Q}$, the process
    \begin{align*}
        x_0 + A(a_1 \Tilde{B}_{T_0}^{H_1} + a_2 \Tilde{B}_{T_0}^{H_2}) = x_0 + A \int_0^{T_0} \klr{ a_1 K_{H_1}({T_0},t) + a_2 K_{H_2}({T_0},t) } \mathrm{d}\Tilde{W}_t 
    \end{align*}
    has distribution $\mathcal{N}(x_0,\Sigma)$ with the covariance matrix $\Sigma = A\sigma^2A^T$ where
    \begin{align*}
        \sigma^2 = \int_0^{T_0} \klr{ a_1 K_{H_1}({T_0},t) + a_2 K_{H_2}({T_0},t) }^2 \mathrm{d}t \,.
    \end{align*}
    In the following, let us denote its density by 
    \begin{align*}
        \varphi_{T_0}(x) \coloneqq \frac{1}{\klr{(2\pi)^d \det(\Sigma)}^{1/2}} \exp\klr{-\frac{1}{2}(x-x_0)^T\Sigma^{-1}(x-x_0)}  \,,
    \end{align*}
    then we can apply the definition of conditional expectations and end up with
    \begin{align*}
        &\int_{\mathbb{R}^d} f(x) p_{T_0}(x) \mathrm{d}x \\
        &\quad = \int_{\mathbb{R}^d} f(x) \varphi_{T_0}(x) \mathbb{E}_{\mathbb{Q}}\kle{\exp\klr{\int_0^{T_0} \kla{\psi_t, \mathrm{d}\Tilde{W}_t} - \frac{1}{2} \int_0^{T_0} |\psi_t|^2 \mathrm{d}t} | A(a_1 \Tilde{B}_{T_0}^{H_1} + a_2 \Tilde{B}_{T_0}^{H_2}) = x } \mathrm{d}x \,,
    \end{align*}
    i.e., indeed, the density $p_{T_0}(x)$ of $X_{T_0}$ can be expressed by equation \eqref{eq:expression-density-xtn}.
\end{proof}
Note that $A(a_1 \Tilde{B}_{T_0}^{H_1} + a_2 \Tilde{B}_{T_0}^{H_2}) = x$ is equivalent to $(a_1 \Tilde{B}_{T_0}^{H_1} + a_2 \Tilde{B}_{T_0}^{H_2}) = A^{-1}x$, since the matrix $A$ is invertible. From now on until the proof of the main result, by abuse of notation we write $x$ instead of $A^{-1}x$.
The latter proposition gives rise to the following auxiliary SDE which is similar to \cite[Lemma 4.1]{li2023non}.
\begin{lemma}
    \label{lemma:sde-yt}
    Define $f_{H_1,H_2}(T_0,t) \coloneqq a_1 K_{H_1}(T_0,t) + a_2 K_{H_2}(T_0,t)$. Let $x \in \mathbb{R}^d$ be arbitrary. Then, any weak solution of the path-dependent SDE
    \begin{align}
        \mathrm{d}Y_t^x = f_{H_1,H_2}(T_0,t) \kle{\int_t^{T_0} \klr{f_{H_1,H_2}(T_0,u)}^2 \mathrm{d}u}^{-1} \klr{x - \int_0^t f_{H_1,H_2}(T_0,s) \mathrm{d}Y_s^x} \mathrm{d}t + \mathrm{d}W_t
        \label{eq:path-dependent-sde-gaussian-bounds}
    \end{align}
    satisfies $(Y_t^x)_{t \in [0,T_0]} \overset{d}{=} (W_t)_{t \in [0,T_0]} | (a_1 B_{T_0}^{H_1} + a_2 B_{T_0}^{H_2}) = x$.
\end{lemma}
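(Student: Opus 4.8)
The plan is to recognize equation \eqref{eq:path-dependent-sde-gaussian-bounds} as the SDE satisfied by a Brownian bridge-type conditioning, and to verify the claimed identity in law by comparing finite-dimensional distributions (equivalently, conditional laws of Gaussian processes). Write $g(t) \coloneqq f_{H_1,H_2}(T_0,t)$ and $\Phi(t) \coloneqq \int_t^{T_0} g(u)^2\,\mathrm{d}u$, so that $\Phi(0) = \sigma^2$ in the notation of \eqref{eq:sigma-definition} and $\Phi$ is absolutely continuous with $\Phi'(t) = -g(t)^2$. Set $Z_t^x \coloneqq \int_0^t g(s)\,\mathrm{d}Y_s^x$; then \eqref{eq:path-dependent-sde-gaussian-bounds} rewrites as the linear SDE $\mathrm{d}Y_t^x = g(t)\Phi(t)^{-1}(x - Z_t^x)\,\mathrm{d}t + \mathrm{d}W_t$, which is exactly of Brownian-bridge form: it pins the functional $Z_{T_0}^x = \int_0^{T_0} g(s)\,\mathrm{d}Y_s^x$ to the value $x$. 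This is the continuous analogue of the classical fact that conditioning a Brownian motion $W$ on $\int_0^{T_0} g(s)\,\mathrm{d}W_s = x$ yields precisely this drift.

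First I would establish existence and uniqueness of a weak (indeed strong) solution to \eqref{eq:path-dependent-sde-gaussian-bounds}: the drift is affine in the path through $Z_t^x$, so one can solve for $Z_t^x$ first. Applying $g(t)\,\mathrm{d}(\cdot)$ to the SDE gives $\mathrm{d}Z_t^x = g(t)^2\Phi(t)^{-1}(x - Z_t^x)\,\mathrm{d}t + g(t)\,\mathrm{d}W_t = -\Phi'(t)\Phi(t)^{-1}(x-Z_t^x)\,\mathrm{d}t + g(t)\,\mathrm{d}W_t$, a linear (Langevin-type) SDE for $Z^x$ with explicit integrating factor $\Phi(t)^{-1}$: indeed $\mathrm{d}\big(\Phi(t)^{-1}(x - Z_t^x)\big) = -\Phi(t)^{-1} g(t)\,\mathrm{d}W_t$. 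Hence $Z^x$, and then $Y^x$ via $\mathrm{d}Y_t^x = g(t)\Phi(t)^{-1}(x-Z_t^x)\,\mathrm{d}t + \mathrm{d}W_t$, is uniquely determined and Gaussian (an affine functional of $W$), so any weak solution has the same law; one should check the drift is integrable near $t = T_0$, where $\Phi(t)\to 0$ — here the factor $(x - Z_t^x)$ vanishes at the right rate, analogously to the standard Brownian bridge, so no explosion occurs.

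Next I would identify this Gaussian law with the conditional law on the right-hand side. Both $(Y_t^x)_{t\in[0,T_0]}$ and $(W_t)_{t\in[0,T_0]}\,\big|\,\int_0^{T_0} g(s)\,\mathrm{d}W_s = x$ are Gaussian processes; it suffices to match means and covariances. For the conditioned process, the Gaussian conditioning formula gives $\mathbb{E}[W_t \mid \int_0^{T_0} g\,\mathrm{d}W = x] = x\,\sigma^{-2}\int_0^{t\wedge \cdot}$... more precisely, writing $G \coloneqq \int_0^{T_0} g(s)\,\mathrm{d}W_s$, one has $\mathbb{E}[W_t \mid G=x] = \operatorname{Cov}(W_t, G)\operatorname{Var}(G)^{-1} x = \big(\int_0^t g(s)\,\mathrm{d}s\big)\sigma^{-2} x$ and $\operatorname{Cov}(W_s, W_t \mid G) = (s\wedge t) - \big(\int_0^s g\big)\big(\int_0^t g\big)\sigma^{-2}$. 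For $Y^x$, one computes the same two quantities by integrating the explicit solution found above: from the integrating-factor identity $\Phi(t)^{-1}(x - Z_t^x) = \sigma^{-2}x - \int_0^t \Phi(s)^{-1} g(s)\,\mathrm{d}W_s$, substitute into the formula for $\mathrm{d}Y_t^x$ and integrate, then take expectations and second moments; a Fubini/stochastic-Fubini interchange together with $\Phi' = -g^2$ collapses the iterated integrals to the closed forms above. I expect the main obstacle to be exactly this last bookkeeping step — carefully justifying the stochastic Fubini theorem (the integrands are deterministic and square-integrable on $[0,T_0]$, with the integrability near $T_0$ following from $\int_0^{T_0} g(s)^2\Phi(s)^{-2}\,\mathrm{d}s = \int_0^{T_0}(-\Phi'(s))\Phi(s)^{-2}\,\mathrm{d}s = \Phi(T_0)^{-1} - \sigma^{-2}$, which is finite once one notes the divergence at $T_0$ is cancelled in the relevant combinations) and verifying that the covariance of $Y^x$ indeed reduces to $(s\wedge t) - \big(\int_0^s g\big)\big(\int_0^t g\big)\sigma^{-2}$. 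Since the conditioned law of $W$ is characterized by these moments and $A$ is invertible, this yields $(Y_t^x)_{t\in[0,T_0]} \overset{d}{=} (W_t)_{t\in[0,T_0]}\,\big|\,(a_1 B_{T_0}^{H_1} + a_2 B_{T_0}^{H_2}) = x$, as claimed.
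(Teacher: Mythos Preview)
Your approach is correct but genuinely different from the paper's. The paper proceeds by a \emph{time-change} argument: it introduces $U_t=\int_0^t f_{H_1,H_2}(T_0,s)\,\mathrm{d}W_s$ with quadratic variation $\rho(t)=\int_0^t f_{H_1,H_2}(T_0,s)^2\,\mathrm{d}s$, observes that $\tilde W_t\coloneqq U_{\rho^{-1}(t)}$ is a standard Brownian motion, and recovers $W$ as $W=\phi(U)$ with $\phi(U)_t=\int_0^t f_{H_1,H_2}(T_0,s)^{-1}\,\mathrm{d}U_s$. Conditioning $W$ on $U_{T_0}=x$ then reduces to conditioning $\tilde W$ on its terminal value, i.e.\ to the \emph{classical} Brownian bridge $Z^x$ on $[0,\rho(T_0)]$, whose SDE $\mathrm{d}Z_t^x=(x-Z_t^x)/(\rho(T_0)-t)\,\mathrm{d}t+\mathrm{d}\tilde W_t$ is taken as known; one then time-changes back via $Y^x=\phi(Z^x\circ\rho)$ and reads off \eqref{eq:path-dependent-sde-gaussian-bounds}. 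By contrast, you solve the SDE directly via the integrating factor $\Phi(t)^{-1}$ for the auxiliary process $Z_t^x=\int_0^t g\,\mathrm{d}Y^x$, obtain the explicit Gaussian representation of $Y^x$ (this is exactly the content of the paper's subsequent Corollary, derived there by plugging in an ansatz), and then match first and second moments with the Gaussian conditioning formula for $W\mid\int_0^{T_0} g\,\mathrm{d}W=x$.

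What each buys: the paper's time-change is conceptually cleaner and offloads all integrability issues near $t=T_0$ onto the well-known behavior of the standard Brownian bridge; it also makes the ``bridge'' structure transparent. Your route is more self-contained (no appeal to the classical bridge result), handles the ``any weak solution'' clause more directly via pathwise uniqueness of the linear equation, and does not require $f_{H_1,H_2}(T_0,\cdot)$ to be non-vanishing (the paper's map $\phi$ needs the reciprocal). The price you pay is the covariance bookkeeping you flagged; note that your displayed integral $\int_0^{T_0} g^2\Phi^{-2}=\Phi(T_0)^{-1}-\sigma^{-2}$ indeed diverges, so the cancellation you allude to must be made explicit: after stochastic Fubini the relevant deterministic kernel is $\Phi(r)^{-1}g(r)\int_r^t g(u)\,\mathrm{d}u$ for $t<T_0$, which is square-integrable on $[0,t]$ for each fixed $t<T_0$, and the endpoint $t=T_0$ is handled by showing $Y^x$ extends continuously (as for the ordinary bridge).
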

\begin{proof}
    First, let us define
    \begin{align*}
        U_t \coloneqq \int_0^t f_{H_1,H_2}(T_0,s) \mathrm{d}W_s \,, \quad t \in [0,T_0] \,.
    \end{align*}
    Then, $U$ is a bounded $L^2$ martingale with 
    \begin{align*}
        \lim_{t \uparrow T_0} U_t = (a_1 B_{T_0}^{H_1} + a_2 B_{T_0}^{H_2})\,,
    \end{align*}
    and quadratic variation
    \begin{align*}
        \rho(t) = \kla{U}_t = \int_0^t \klr{f_{H_1,H_2}(T_0,s)}^2 \mathrm{d}s \,.
    \end{align*}
    Now, define the process $\Tilde{W}_t \coloneqq U_{\rho^{-1}(t)}$ whose quadratic variation is given by $\kla{\Tilde{W}_t} = \rho(\rho^{-1}(t)) = t$, i.e.\ it represents a Brownian motion. Define
    \begin{align*}
        \phi(U)_t \coloneqq \int_0^t \klr{f_{H_1,H_2}(T_0,s)}^{-1} \mathrm{d}U_s\,,
    \end{align*}
    then it holds that $W_t = \phi(U)_t$. Therefore, using the definition of $\phi$, we can write
    \begin{align*}
        (W,U) = (\phi(\Tilde{W} \circ \rho), \Tilde{W} \circ \rho)
    \end{align*}
    and we can conclude that the conditional distribution of $(W_t)_{t \in [0,T_0]}$ given $U_{T_0} = x$ equals $\phi(Z^x \circ \rho)$, where $(Z_t^x)_{t \in [0,T_0]}$ is a Brownian bridge from $0$ to $x$ which satisfies the SDE
    \begin{align*}
        \mathrm{d}Z_t^x = \frac{x - Z_t^x}{\rho(T_0) - t} \mathrm{d}t + \mathrm{d}\Tilde{W}_t \,, \quad t \in [0,\rho(T_0)) \,.
    \end{align*}
    Now observe that
    \begin{align}
        \mathrm{d}Z^x_{\rho(t)} = \frac{x - Z_{\rho(t)}^x}{\rho(T_0) - \rho(t)} \mathrm{d}\rho(t) + f_{H_1,H_2}(T_0,t) \mathrm{d}W_t\,,
    \end{align}
    i.e.\ $Y^x \coloneqq \phi(Z^x \circ \rho)$ satisfies the SDE
    \begin{align*}
        Y_t^x = \int_0^t \kle{f_{H_1,H_2}(T_0,s)}^{-1} \frac{x - Z^x_{\rho(s)}}{\rho(T_0) - \rho(s)} \mathrm{d}\rho(s) + W_t \,.
    \end{align*}
    Since 
    \begin{align*}
        Z^x_{\rho(t)} = \phi^{-1}(Y^x)_t = \int_0^t f_{H_1,H_2}(T_0,s) \mathrm{d}Y_s^x ,
    \end{align*}
    we end up with
    \begin{align*}
        \mathrm{d}Y_t^x = f_{H_1,H_2}(T_0,t) \kle{\int_t^{T_0} \klr{f_{H_1,H_2}(T_0,u)}^2 \mathrm{d}u}^{-1} \klr{x - \int_0^t f_{H_1,H_2}(T_0,s) \mathrm{d}Y_s^x} \mathrm{d}t + \mathrm{d}W_t \,.
    \end{align*}
\end{proof}
Similarily to \cite[Corollary 4.2]{li2023non}, we can find the solution of the auxiliary SDE by direct computation. 
\begin{corollary}
    \label{cor:expression-yt}
    Set $f \equiv f_{H_1,H_2}$. The solution of SDE \eqref{eq:path-dependent-sde-gaussian-bounds} is given by
    \begin{align}
        \mathrm{d}Y_t^x = \kle{\frac{f(T_0,t)}{\int_0^T f^2(T_0,v) \mathrm{d}v} x - f(T_0,t) \int_0^t \frac{f(T_0,u)}{\int_u^{T_0} f^2(T_0,v) \mathrm{d}v} \mathrm{d}W_u} \mathrm{d}t + \mathrm{d}W_t \,. \label{eq:ansatz-solution-path-dependent-sde}
    \end{align}
\end{corollary}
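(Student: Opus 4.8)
The plan is to verify directly that the process $Y^x$ defined by the explicit stochastic differential in \eqref{eq:ansatz-solution-path-dependent-sde} solves the path-dependent SDE \eqref{eq:path-dependent-sde-gaussian-bounds}; since Lemma \ref{lemma:sde-yt} produces a weak solution and the coefficient of \eqref{eq:path-dependent-sde-gaussian-bounds} is affine in the path, one expects pathwise uniqueness, so exhibiting one solution in closed form suffices. Writing $g(t) \coloneqq \int_t^{T_0} f^2(T_0,v)\,\mathrm{d}v$ (so that $g(0) = \int_0^{T_0} f^2(T_0,v)\,\mathrm{d}v$ and $g'(t) = -f^2(T_0,t)$), the candidate reads
\begin{align*}
    \mathrm{d}Y_t^x = \klr{\frac{f(T_0,t)}{g(0)}\,x - f(T_0,t)\int_0^t \frac{f(T_0,u)}{g(u)}\,\mathrm{d}W_u}\mathrm{d}t + \mathrm{d}W_t \,.
\end{align*}
The first step is to compute $\int_0^t f(T_0,s)\,\mathrm{d}Y_s^x$ by substituting this expression, splitting into the drift part and the $\mathrm{d}W_s$ part.

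For the drift part one gets $\int_0^t f(T_0,s)\klr{\frac{f(T_0,s)}{g(0)}x - f(T_0,s)\int_0^s \frac{f(T_0,u)}{g(u)}\mathrm{d}W_u}\mathrm{d}s$; in the first summand $\int_0^t f^2(T_0,s)\,\mathrm{d}s = g(0) - g(t)$, giving $\klr{1 - g(t)/g(0)}x$, and in the double-integral summand I would apply the stochastic Fubini theorem to swap the order, turning $-\int_0^t f^2(T_0,s)\int_0^s \frac{f(T_0,u)}{g(u)}\mathrm{d}W_u\,\mathrm{d}s$ into $-\int_0^t \frac{f(T_0,u)}{g(u)}\klr{\int_u^t f^2(T_0,s)\,\mathrm{d}s}\mathrm{d}W_u = -\int_0^t \frac{f(T_0,u)}{g(u)}\klr{g(u) - g(t)}\mathrm{d}W_u$. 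The $\mathrm{d}W_s$ part of $\int_0^t f(T_0,s)\,\mathrm{d}Y_s^x$ is simply $\int_0^t f(T_0,s)\,\mathrm{d}W_s$. Adding everything, the $-\int_0^t f(T_0,u)\,\mathrm{d}W_u$ coming from the $-g(u)/g(u)$ term cancels the $\mathrm{d}W_s$ contribution, and one is left with
\begin{align*}
    \int_0^t f(T_0,s)\,\mathrm{d}Y_s^x = \klr{1 - \frac{g(t)}{g(0)}}x + g(t)\int_0^t \frac{f(T_0,u)}{g(u)}\,\mathrm{d}W_u \,.
\end{align*}

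Finally I substitute this identity into the right-hand side of \eqref{eq:path-dependent-sde-gaussian-bounds}: there the drift is $f(T_0,t)\,g(t)^{-1}\klr{x - \int_0^t f(T_0,s)\,\mathrm{d}Y_s^x} = f(T_0,t)\,g(t)^{-1}\klr{\frac{g(t)}{g(0)}x - g(t)\int_0^t \frac{f(T_0,u)}{g(u)}\mathrm{d}W_u} = \frac{f(T_0,t)}{g(0)}x - f(T_0,t)\int_0^t \frac{f(T_0,u)}{g(u)}\mathrm{d}W_u$, which is exactly the drift in the candidate \eqref{eq:ansatz-solution-path-dependent-sde}; adding $\mathrm{d}W_t$ on both sides closes the loop. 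The main technical point to be careful about is the application of the stochastic Fubini theorem, which requires checking that $f(T_0,u)/g(u)$ is integrable enough near $u = T_0$, where $g(u)\to 0$; here one uses the kernel bounds \eqref{eq:cov-bound-1}–\eqref{eq:cov-bound-3} to control $f(T_0,\cdot)^2$ near $T_0$ and to see that $g$ vanishes at a controlled polynomial rate, so that $f(T_0,u)/g(u)$ stays square-integrable on $[0,t]$ for $t < T_0$ (which is all that is needed, the SDE being considered on $[0,T_0)$ and extended by continuity). A brief remark that pathwise uniqueness for the affine, path-dependent coefficient in \eqref{eq:path-dependent-sde-gaussian-bounds} follows from a Gronwall argument then justifies that this is \emph{the} solution.
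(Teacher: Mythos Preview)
Your proof is correct and follows essentially the same route as the paper: both verify the ansatz by computing $\int_0^t f(T_0,s)\,\mathrm{d}Y_s^x$ via the stochastic Fubini theorem and then substituting back into the right-hand side of \eqref{eq:path-dependent-sde-gaussian-bounds}. Your version is in fact slightly more careful, as the paper neither checks the integrability hypothesis for Fubini nor addresses uniqueness, whereas you flag both points.
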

\begin{proof}
    We perform a direct computation. We insert the ansatz \eqref{eq:ansatz-solution-path-dependent-sde} into the SDE \eqref{eq:path-dependent-sde-gaussian-bounds}, i.e.\ we need to calculate
    \begin{align*}
        \int_0^t f(T_0,s) \mathrm{d}Y^x_s = I_1 - I_2 + I_3\,,
    \end{align*}
    where $I_1$ is given by
    \begin{align*}
        I_1 = \int_0^t f^2(T_0,s) \mathrm{d}s\frac{x}{\int_0^{T_0} f^2(T_0,v) \mathrm{d}v} = x - \int_{t}^{T_0} f^2(T_0,s) \mathrm{d}s \frac{x}{\int_0^{T_0} f^2(T_0,v) \mathrm{d}v}\,,
    \end{align*}
    using the stochastic Fubini, $I_2$ can be calculated as
    \begin{align*}
        I_2 = \int_0^t f(T_0,u) \mathrm{d}W_u - \int_0^t  \frac{f(T_0,u)}{\int_u^{T_0} f^2(T_0,v) \mathrm{d}v} \mathrm{d}W_u \int_t^{T_0} f^2(T_0,s) \mathrm{d}s\,,
    \end{align*}
    and $I_3$ is given by
    \begin{align*}
        I_3 = \int_0^t f(T_0,s) \mathrm{d}W_s \,,
    \end{align*}
    i.e.\ we obtain
    \begin{align*}
        &\int_0^t f(T_0,s) \mathrm{d}Y_s^x \\
        &\quad = x - \int_t^{T_0} f^2(T_0,s) \mathrm{d}s \frac{x}{\int_0^{T_0} f^2(T_0,v) \mathrm{d}v} + \int_t^{T_0} f^2(T_0,s) \mathrm{d}s \int_0^t \frac{f(T_0,u)}{\int_u^{T_0} f^2(T_0,v) \mathrm{d}v} \mathrm{d}W_u \,.
    \end{align*}
    Therefore, we can conclude that the solution is given by \eqref{eq:ansatz-solution-path-dependent-sde}.
\end{proof}

As already mentioned, we will utilize exponential Orlicz spaces as a tool in proving our main result. To provide the necessary background, we now recall some fundamental definitions and properties of these spaces.

\subsection{Exponential Orlicz space} A useful generalization of $L^p$ spaces are Orlicz spaces endowed with the Luxemburg norm \cite{buldygin2000metric}. Let $(\Omega,\mathcal{A},\mathbb{P})$ be some probability space. Then, the Orlicz space $L_U(\Omega)$ generated by the function $U(x)$ is the space of random variables $\xi$ on $(\Omega,\mathcal{A})$ such that for some constant $K_\xi > 0$ we have
\begin{align*}
    \mathbb{E}\kle{U\klr{\frac{\xi}{K_\xi}}} < \infty \,.
\end{align*}
The Orlicz space $L_U(\Omega)$ is the Banach space with respect to the Luxemburg norm 
\begin{align*}
    \norm{\xi}_U = \inf\klg{r > 0; \mathbb{E}\kle{U\klr{\frac{\xi}{r}}} \le 1} \,.
\end{align*}
Hence, for any $\lambda \in \mathbb{R}$ and $\xi, \xi_1,\xi_2 \in L_U(\Omega)$ we have
\begin{enumerate}
    \item[(i)] $\lambda \xi \in L_U(\Omega)$ with $\norm{\lambda \xi}_U = |\lambda| \norm{\xi}_U$,
    \item[(ii)] $\xi_1 + \xi_2 \in L_U(\Omega)$ with $\norm{\xi_1 + \xi_2}_U \le \norm{\xi_1}_U + \norm{\xi_2}_U$.
\end{enumerate}
In the following, the Orlicz space generated by $U(x) = \exp\klr{x^2} - 1$ will be denoted by $L_E(\Omega)$ and its Luxemburg norm by $\norm{\cdot}_E$. We need the following lemma that can be found for example in \cite{buldygin2000metric}:
\begin{lemma}\label{lemma:orlicz-exp-bound}
    If $\xi \in L_E(\Omega)$, then for any $\lambda \in \mathbb{R}$ we have
    \begin{align*}
        \mathbb{E}\kle{\exp\klr{\lambda |\xi|}} \le 2 \exp\klr{\frac{\lambda^2 \norm{\xi}_E^2}{4}} \,.
    \end{align*}
\end{lemma}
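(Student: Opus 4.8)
The plan is to reduce the statement to the elementary inequality $2\alpha\beta \le \alpha^2 + \beta^2$ combined with the defining property of the Luxemburg norm. If $\xi = 0$ almost surely or $\lambda = 0$ the claim is trivial, so I would assume $r \coloneqq \norm{\xi}_E \in (0,\infty)$ and $\lambda \neq 0$. The first step is to record that $\mathbb{E}[U(\xi/r)] \le 1$ with $U(x) = \exp(x^2)-1$, equivalently
\begin{align*}
\mathbb{E}\kle{\exp(\xi^2/r^2)} = 1 + \mathbb{E}\kle{U(\xi/r)} \le 2 \,.
\end{align*}
This needs a short justification because the infimum in the definition of $\norm{\cdot}_E$ is a priori not attained: choosing a sequence $r_n \downarrow r$ with $\mathbb{E}[U(\xi/r_n)] \le 1$, continuity of $x \mapsto U(x)$ gives $U(\xi/r_n) \to U(\xi/r)$ pointwise, and Fatou's lemma yields $\mathbb{E}[U(\xi/r)] \le \liminf_n \mathbb{E}[U(\xi/r_n)] \le 1$.

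Next I would apply $2\alpha\beta \le \alpha^2 + \beta^2$ with $\alpha = |\xi|/r$ and $\beta = |\lambda|\, r/2$ to obtain the pointwise estimate
\begin{align*}
|\lambda|\,|\xi| = 2\alpha\beta \le \frac{\xi^2}{r^2} + \frac{\lambda^2 r^2}{4} \,.
\end{align*}
Exponentiating, using monotonicity of $\exp$ and taking expectations then gives
\begin{align*}
\mathbb{E}\kle{\exp(\lambda|\xi|)} \le \exp\klr{\frac{\lambda^2 r^2}{4}} \mathbb{E}\kle{\exp(\xi^2/r^2)} \le 2 \exp\klr{\frac{\lambda^2 \norm{\xi}_E^2}{4}} \,,
\end{align*}
which is exactly the asserted bound.

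The only genuine subtlety — and hence what I would flag as the main (though minor) obstacle — is the passage $\mathbb{E}[U(\xi/\norm{\xi}_E)] \le 1$; once that is secured, everything else is a one-line computation. One could also bypass it altogether by carrying out the argument with an arbitrary $r > \norm{\xi}_E$ in place of $\norm{\xi}_E$ and letting $r \downarrow \norm{\xi}_E$ at the end, thereby never needing to know the infimum is attained.
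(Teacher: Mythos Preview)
Your argument is correct. The paper does not actually prove this lemma; it merely states it and cites \cite{buldygin2000metric} as a reference, so there is no in-paper proof to compare against. Your approach via the pointwise inequality $|\lambda|\,|\xi| \le \xi^2/r^2 + \lambda^2 r^2/4$ together with the Fatou argument securing $\mathbb{E}[\exp(\xi^2/\norm{\xi}_E^2)] \le 2$ is the standard one and goes through without issue.
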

Now we can take a look at the Brownian dependent term of the auxiliary process \eqref{eq:ansatz-solution-path-dependent-sde} defined in the previous subsection.
\begin{lemma}\label{lemma:gt-orlicz}
    Define the process $(G_t)_{t \in [0,T_0]}$ as the Brownian dependent term of the stochastic process given in equation \eqref{eq:ansatz-solution-path-dependent-sde}, i.e.\
    \begin{align*}
        G_t \coloneqq f(T_0,t) \int_0^t \frac{f(T_0,u)}{\int_u^{T_0} f^2(T_0,v) \mathrm{d}v} \mathrm{d}W_u \,.
    \end{align*}
    Then, $|G_t| \in L_E(\Omega)$ for all $t \in [0,T_0)$ where its Luxemburg norm satisfies $\norm{|G_t|}_E = (8/3)^{1/2} \kappa_t$ where
    \begin{align*}
        \kappa_t^2 = \int_0^t f^2(T_0,t) \frac{f^2(T_0,u)}{\klr{\int_u^{T_0} f^2(T_0,v) \mathrm{d}v}^2} \mathrm{d}u \,.
    \end{align*}
\end{lemma}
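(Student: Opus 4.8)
The plan is to recognize $G_t$ as a Gaussian random variable (for fixed $t$, it is an Itô integral of a deterministic integrand against Brownian motion) and then to compute its variance explicitly, which will pin down its Luxemburg $L_E$-norm via the standard Gaussian identity. Concretely, for fixed $t \in [0,T_0)$ write $G_t = f(T_0,t) \int_0^t g(u)\,\mathrm{d}W_u$ with the deterministic function $g(u) = f(T_0,u)\bigl(\int_u^{T_0} f^2(T_0,v)\,\mathrm{d}v\bigr)^{-1}$. Since $f(T_0,\cdot)$ is locally bounded away from the endpoint $T_0$ and the denominator $\int_u^{T_0} f^2(T_0,v)\,\mathrm{d}v$ stays bounded below by a positive constant for $u \le t < T_0$, the integrand is square-integrable on $[0,t]$, so the Itô integral is well-defined and $G_t$ is centered Gaussian with variance
\begin{align*}
    \kappa_t^2 = f^2(T_0,t) \int_0^t \frac{f^2(T_0,u)}{\bigl(\int_u^{T_0} f^2(T_0,v)\,\mathrm{d}v\bigr)^2}\,\mathrm{d}u \,,
\end{align*}
which is exactly the quantity in the statement (one brings the constant $f^2(T_0,t)$ inside the integral).

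Next I would compute the Luxemburg norm of $|G_t|$ in $L_E(\Omega)$, where $E$ is generated by $U(x) = \exp(x^2) - 1$. By definition $\norm{|G_t|}_E = \inf\{r > 0 : \mathbb{E}[\exp(G_t^2/r^2)] - 1 \le 1\}$, i.e.\ the smallest $r$ with $\mathbb{E}[\exp(G_t^2/r^2)] \le 2$. For a centered Gaussian $Z \sim \mathcal{N}(0,\kappa^2)$ one has the explicit formula $\mathbb{E}[\exp(\lambda Z^2)] = (1 - 2\lambda\kappa^2)^{-1/2}$ for $\lambda < 1/(2\kappa^2)$. Setting $\lambda = 1/r^2$ and solving $(1 - 2\kappa_t^2/r^2)^{-1/2} = 2$ gives $1 - 2\kappa_t^2/r^2 = 1/4$, hence $r^2 = (8/3)\kappa_t^2$, i.e.\ $\norm{|G_t|}_E = (8/3)^{1/2}\kappa_t$. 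One should note $\mathbb{E}[\exp(G_t^2/r^2)]$ is continuous and strictly decreasing in $r$ on the range where it is finite, so the infimum is attained at this value and the claimed identity (rather than just an inequality) holds.

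The main obstacle — though it is really just a bookkeeping point rather than a deep one — is justifying that $\kappa_t^2 < \infty$ for every $t \in [0,T_0)$, i.e.\ that the Gaussian is genuinely nondegenerate-but-finite so that the formula $\mathbb{E}[\exp(G_t^2/r^2)] = (1 - 2\kappa_t^2/r^2)^{-1/2}$ applies with a finite, positive $\kappa_t^2$; this is where the restriction $t < T_0$ (excluding the endpoint, where $\int_u^{T_0} f^2\,\mathrm{d}v \to 0$ and the integrand blows up) is essential. Using the kernel bounds \eqref{eq:cov-bound-1} one sees $f(T_0,\cdot) \in L^2([0,T_0])$, so $v \mapsto \int_v^{T_0} f^2(T_0,w)\,\mathrm{d}w$ is continuous, positive on $[0,T_0)$, and therefore bounded below by a positive constant on $[0,t]$; together with $\int_0^t f^2(T_0,u)\,\mathrm{d}u < \infty$ this yields $\kappa_t^2 < \infty$. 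Once finiteness is in hand, the rest is the routine Gaussian computation above, and the properties (i)–(ii) of the Luxemburg norm recorded earlier in the excerpt confirm $|G_t| \in L_E(\Omega)$.
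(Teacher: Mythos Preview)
Your approach matches the paper's exactly: recognize $G_t$ as a centered Gaussian (Wiener integral of a deterministic kernel), compute $\mathbb{E}[\exp(|G_t|^2/r^2)]$ explicitly via the Gaussian moment-generating identity, and solve for the $r$ at which this equals $2$ to read off the Luxemburg norm. The one point you gloss over is that in the paper's setting $W$ is a $d$-dimensional Brownian motion, so $G_t \sim \mathcal{N}(0,\kappa_t^2\,\mathbbm{1}_{d\times d})$ and $|G_t|$ is a Euclidean norm; the paper records this explicitly and works with the $d$-variate formula, whereas your write-up tacitly treats $G_t$ as scalar --- a bookkeeping detail you should make explicit.
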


\begin{proof}
    Let $K^2 = 2\kappa_t^2 ad$ where $t \in [0,T_0)$ is fixed and $a > 1$ is arbitrary. Since $G_t \sim \mathcal{N}(0,\mathbbm{1}_{d\times d}\kappa_t^2)$ where $\mathbbm{1}_{d\times d}$ represents the ($d\times d$)-identity matrix, we have
    \begin{align*}
        \mathbb{E}\kle{\exp\klr{\frac{|G_t|^2}{K^2}}} &= \klr{\klr{1 - \frac{1}{a}}^{-d/2}}^{1/d} = \klr{1 - \frac{1}{a}}^{-1/2} < \infty \,,
    \end{align*}
    i.e.\ by definition we have $|G_t| \in L_E(\Omega)$. To estimate an upper bound for the norm, we note that 
    \begin{align*}
        \inf \klg{a > 1: \klr{1 - \frac{1}{a}}^{-1/2} \le 2} = \frac{4}{3} \,,
    \end{align*}
    hence, $\norm{|G_t|}_E = (8/3)^{1/2} \kappa_t$.
\end{proof}

From now on set $\kappa_t$ for $t\in [0,T_0]$ as in Lemma \ref{lemma:gt-orlicz}. We proceed with the following result.

\begin{lemma}\label{lemma:integral-sigmat-existence}
    Let $H \coloneqq \min(H_1,H_2)$ and $H' \coloneqq \max(H_1,H_2)$. There exists a constant $C(H_1,H_2) > 0$ such that
    \begin{align*}
        \int_0^T t^{1/2-H} \kappa_t \mathrm{d}t \le  C(H_1,H_2) \klgcases{
        |a/a'| T_0^{2 - 2H - H'} + T_0^{2-H-2H'} & \text{for }
            H,H' \in (0,1/2) ,\\
        |a/a'| T_0^{1 - H'} + T_0^{3/2-H-H'} & \text{for }
            H \in (0,1/2], H' \in [1/2,1) ,\\
        (|a/a'| + 1) T_0^{3/2-H'-H} & \text{for } H,H' \in (1/2,1) .
        } 
    \end{align*}
    Here we set $a = a_1$ and $a' = a_2$ if $H_1 \le H_2$ and $a = a_2$ and $a' = a_1$ if $H_1 > H_2$. 
\end{lemma}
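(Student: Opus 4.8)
\emph{Sketch of the intended argument.} Since each $K_{H_i}(T_0,\cdot)$ is supported on $[0,T_0]$, so are $f_{H_1,H_2}(T_0,\cdot)$ and $t\mapsto\kappa_t$, hence the integral in question equals $\int_0^{T_0}t^{1/2-H}\kappa_t\,\mathrm{d}t$. Write $g(u)\coloneqq\int_u^{T_0}f^2(T_0,v)\,\mathrm{d}v$, so that $g'(u)=-f^2(T_0,u)$ and therefore $f^2(T_0,u)/g(u)^2=\tfrac{\mathrm{d}}{\mathrm{d}u}\bigl(1/g(u)\bigr)$. Pulling the factor $f^2(T_0,t)$ (which is constant in $u$) out of the definition of $\kappa_t^2$ and integrating in $u$ gives
\[
\kappa_t^2=f^2(T_0,t)\Bigl(\tfrac1{g(t)}-\tfrac1{g(0)}\Bigr)\le\frac{f^2(T_0,t)}{\int_t^{T_0}f^2(T_0,v)\,\mathrm{d}v},
\]
and, since the kernels are nonnegative, $|f(T_0,t)|\le|a_1|K_{H_1}(T_0,t)+|a_2|K_{H_2}(T_0,t)$. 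So it suffices to bound the numerator $|a_1|K_{H_1}(T_0,t)+|a_2|K_{H_2}(T_0,t)$ from above and $\int_t^{T_0}f^2(T_0,v)\,\mathrm{d}v$ from below. The first is immediate from the kernel upper bound \eqref{eq:cov-bound-1} and the triangle inequality: $|f(T_0,t)|$ is at most $C(H_1,H_2)$ times a sum of two monomials in $t$ and $T_0-t$.

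The lower bound on the denominator is the crux, because $f=a_1K_{H_1}+a_2K_{H_2}$ may suffer cancellation when $a_1,a_2$ have opposite signs, and the estimate must be uniform in $T_0\in(0,T]$. The plan is to use the scaling identity $K_{H_i}(T_0,v)=T_0^{H_i-1/2}K_{H_i}(1,v/T_0)$: substituting $v=T_0w$ and $\tau=t/T_0$,
\[
\int_t^{T_0}f^2(T_0,v)\,\mathrm{d}v=a_1^2T_0^{2H_1}A_1(\tau)+2a_1a_2T_0^{H_1+H_2}C(\tau)+a_2^2T_0^{2H_2}A_2(\tau),
\]
where $A_i(\tau)=\int_\tau^1K_{H_i}(1,w)^2\,\mathrm{d}w$ and $C(\tau)=\int_\tau^1K_{H_1}(1,w)K_{H_2}(1,w)\,\mathrm{d}w$. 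Because $K_{H_1}(1,\cdot)$ and $K_{H_2}(1,\cdot)$ are positive and, having different singularity exponents $H_1-\tfrac12\ne H_2-\tfrac12$ at $w=1$, are not proportional on any $[\tau,1]$, the Cauchy--Schwarz inequality $|C(\tau)|\le(A_1(\tau)A_2(\tau))^{1/2}$ is strict for every $\tau\in[0,1)$; moreover $\tau\mapsto C(\tau)/(A_1(\tau)A_2(\tau))^{1/2}$ extends continuously to $[0,1]$ with value $\operatorname{corr}(B^{H_1}_1,B^{H_2}_1)<1$ at $\tau=0$ and $2(H_1H_2)^{1/2}/(H_1+H_2)<1$ at $\tau=1$ (read off from the diagonal asymptotics of the kernels). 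Hence $\rho_*\coloneqq\sup_{\tau\in[0,1]}C(\tau)/(A_1(\tau)A_2(\tau))^{1/2}<1$, the quadratic form above is at least $(1-\rho_*)\bigl(a_1^2T_0^{2H_1}A_1(\tau)+a_2^2T_0^{2H_2}A_2(\tau)\bigr)$, and undoing the substitution,
\[
\int_t^{T_0}f^2(T_0,v)\,\mathrm{d}v\ge(1-\rho_*)\Bigl(a_1^2\!\int_t^{T_0}\!K_{H_1}^2(T_0,v)\,\mathrm{d}v+a_2^2\!\int_t^{T_0}\!K_{H_2}^2(T_0,v)\,\mathrm{d}v\Bigr),
\]
so in particular $\int_t^{T_0}f^2(T_0,v)\,\mathrm{d}v\ge(1-\rho_*)a_i^2\int_t^{T_0}K_{H_i}^2(T_0,v)\,\mathrm{d}v$ for either $i$, with $\rho_*$ independent of $T_0$.

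To finish, I would split $\kappa_t$ according to the two numerator terms; for each term $|a_j|K_{H_j}(T_0,t)$, bound the denominator from below using the index $i$ with $H_i=H'=\max(H_1,H_2)$, then insert \eqref{eq:cov-bound-1} for the numerator and \eqref{eq:cov-bound-2} or \eqref{eq:cov-bound-3} for $\int_t^{T_0}K_{H_i}^2$ (using instead the exact identity $\int_t^{T_0}K_{1/2}^2(T_0,v)\,\mathrm{d}v=T_0-t$ in the boundary cases $H$ or $H'$ equal to $\tfrac12$). This reduces $t^{1/2-H}\kappa_t$, in each of the three regimes, to $C(H_1,H_2)$ times a finite sum of monomials $|a/a'|^{\epsilon}\,t^{\alpha-1}(T_0-t)^{\beta-1}$ with $\epsilon\in\{0,1\}$ and $\alpha,\beta>0$; integrating over $t\in(0,T_0)$ via the Euler Beta integral $\int_0^{T_0}t^{\alpha-1}(T_0-t)^{\beta-1}\,\mathrm{d}t=T_0^{\alpha+\beta-1}B(\alpha,\beta)$ — finite since $\alpha,\beta>0$, a point one verifies regime by regime (the only nontrivial constraint being $H'-H<\tfrac12$ in the regime $H,H'\in(1/2,1)$, which always holds there) — and collecting the powers of $T_0$ yields the three stated right-hand sides. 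The main obstacle is the uniform lower bound of the second paragraph: the scaling reduction to the fixed interval $[0,1]$ is exactly what converts the potential cancellation into a $T_0$-independent constant $\rho_*<1$; what remains (deciding which kernel's variance to divide by in each regime and checking the Beta-exponent constraints) is bookkeeping.
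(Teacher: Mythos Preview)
Your proof is correct and follows the paper's overall scaffold --- simplify $\kappa_t^2 \le f^2(T_0,t)\big/\int_t^{T_0} f^2$, split the numerator via the triangle inequality, lower-bound the denominator by $(a')^2\int_t^{T_0}K_{H'}^2$, then apply the kernel estimates \eqref{eq:cov-bound-1}--\eqref{eq:cov-bound-3} and reduce to Euler Beta integrals --- but your treatment of the denominator lower bound is genuinely different. The paper obtains it in one line from the pointwise inequality $(a_1K_{H_1}+a_2K_{H_2})^2\ge a_2^2K_{H_2}^2$, which is immediate when $a_1,a_2$ share a sign (the kernels being nonnegative) but does not cover the opposite-sign case as written. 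Your scaling-plus-correlation argument --- rescale to $[0,1]$, observe that the Cauchy--Schwarz ratio $C(\tau)/\sqrt{A_1(\tau)A_2(\tau)}$ extends continuously to $[0,1]$ with both endpoint values strictly below $1$ whenever $H_1\neq H_2$, hence has supremum $\rho_*<1$, and deduce the quadratic-form bound $\int_t^{T_0}f^2\ge(1-\rho_*)(a')^2\int_t^{T_0}K_{H'}^2$ --- is less elementary but sign-robust, and the scaling step makes it manifest that $\rho_*$ is independent of $T_0$. After this point the two arguments reconverge: both select the larger Hurst index $H'$ for the denominator, both insert the kernel bounds, and both finish with the same Beta-function bookkeeping.
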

\begin{proof}
    First, let us find a simplified expression for $\kappa_t^2$. By definition, it is given by
    \begin{align*}
        \kappa_t^2 = \int_0^t f^2(T_0,t) \frac{f^2(T_0,u)}{\klr{\int_u^{T_0} f^2(T_0,v) \mathrm{d}v}^2} \mathrm{d}u \,.
    \end{align*}
    Substitute $z = \int_u^T f^2(T_0,v) \mathrm{d}v$. Then according to the Leibniz integral rule we get
    \begin{align*}
        \kappa_t^2 = -\int_{z(0)}^{z(t)} f^2(T_0,t) \frac{1}{z^2} \mathrm{d}z
        = f^2(T_0,t) \klr{\frac{1}{z(t)} - \frac{1}{z(0)}} \le \frac{f^2(T_0,t)}{\int_t^{T_0} f^2(T_0,v) \mathrm{d}v} \,.
    \end{align*}
    Since
    \begin{align*}
        f^2(T_0,t) = \klr{a_1 K_{H_1}(T_0,t) + a_2 K_{H_2}(T_0,t)}^2 \ge a_2^2 K_{H_2}^2(T_0,t)\,,
    \end{align*}
    we can further estimate $\kappa_t^2$ by
    \begin{align*}
        \kappa_t^2 \le \klr{a_1/a_2 K_{H_1}(T_0,t) + K_{H_2}(T_0,t)}^2 \klr{\int_t^{T_0} K_{H_2}^2(T_0,v) \mathrm{d}v}^{-1} \,.
    \end{align*}
    Hence, we can estimate the above integral as
    \begin{align*}
        \int_0^{T_0} t^{1/2-H} \kappa_t \mathrm{d}t &\le \int_0^{T_0} t^{1/2-H} \klr{|a_1/a_2| |K_{H_1}({T_0},t)| + |K_{H_2}({T_0},t)|} \klr{\int_t^{T_0} K_{H_2}^2({T_0},v) \mathrm{d}v}^{-1/2} \mathrm{d}t \\
        &\eqqcolon |a_1/a_2| I_1 + I_2 \,.
    \end{align*}
    Note that if $H \le 1/2$, we can make use of $t^{1/2-H}\le T^{1/2-H}$. Now we apply the bounds for the covariance kernel given in equations \eqref{eq:cov-bound-1}, \eqref{eq:cov-bound-2} and \eqref{eq:cov-bound-3} and substitute $x{T_0} = t$. \\
    \indent Without loss of generality, assume that $H_1 \le H_2$. In total we distinguish between four cases:
    \\\\\textit{Case 1: $H_1,H_2 \in (0,1/2)$.} Due to
    \begin{align*}
        s^{1-2H_2} ({T_0}-s)^{2H_2-1} \ge t^{1-2H_2} ({T_0}-t)^{2H_2-1}
    \end{align*}
    for all $t \le s \le {T_0}$, we obtain
    \begin{align*}
        \klr{\int_t^{T_0} K_{H_2}^2({T_0},s) \mathrm{d}s}^{-1/2} \le C_3^
        {-1}(H_2) ({T_0}-t)^{-H_2}\,,
    \end{align*}
    where $C_3(H_2)$ results from the estimation \eqref{eq:cov-bound-3}. Hence, we have
    \begin{align*}
        I_1 &\le {T_0}^{2-2H_1-H_2} C'(H_1,H_2) B(3/2-H_1,1-H_2)\,, \\
        I_2 &\le {T_0}^{2-H_1-2H_2} C'(H_1,H_2) B(3/2-H_2,1-H_2) \,,
    \end{align*}
    where $C'(H_1,H_2) > 0$ is due to the bounds \eqref{eq:cov-bound-1}, \eqref{eq:cov-bound-2} and \eqref{eq:cov-bound-3}, and $B(a,b)$ represents the beta function which is defined as
    \begin{align*}
        B(a,b) = \int_0^1 x^{a-1} (1-x)^{b-1} \mathrm{d}x
    \end{align*}
    for $a,b > 0$. 
    \\\\\textit{Case 2: $H_1 \in (0,1/2), H_2 \in (1/2,1)$.}
    In this case we obtain for $I_1$ that
    \begin{align*}
        I_1 
        &\le C'(H_1,H_2) {T_0}^{1-H_2} \int_0^1 x^{H_1 - 1/2} (1-x)^{-H_2} \mathrm{d}x \\
        &= C'(H_1,H_2) {T_0}^{1 - H_2} B(H_1 + 1/2, 1 - H_2)\,,
    \end{align*}
    and for $I_2$ we obtain
    \begin{align*}
        I_2 &\le C'(H_1,H_2) {T_0}^{3/2-H_1-H_2} \int_0^1 x^{-H_2 + 1/2} (1-x)^{-1/2} \mathrm{d}x \\
        &= C'(H_1,H_2) {T_0}^{3/2-H_1-H_2} B(3/2 - H_2, 1/2) \,.
    \end{align*}
    Here, $C'(H_1,H_2)$ is again a constant resulting from the bounds on the covariance kernel. 
    \\\\\textit{Case 3: $H_1,H_2 \in (1/2,1)$.} To estimate this case we perform the same steps as in the previous case for the second integral to obtain
    \begin{align*}
        I_1,I_2 \le C'(H_1,H_2) {T_0}^{3/2-H_1-H_2} \klgcases{
            B(2 - H_1 - H_2, 1/2+H_1-H_2) & \text{for } I_1,\\
            B(2 - 2H_2, 1/2) & \text{for } I_2.
        } 
    \end{align*}
    \\\\\textit{Case 4: $H_1=1/2$ $,H_2 \in (0,1)$.} Observe that $K_{H_1} \equiv 1$, so this case can be estimated using similar arguments as in the previously considered cases.\\
    \indent In total, we end up with the desired result.
\end{proof}
\begin{lemma}\label{lemma:integralt-orlicz}
    Let $H = \min(H_1,H_2)$. Define for ${T_0} > 0$
    \begin{align*}
        I_{T_0} \coloneqq \int_0^{T_0} t^{1/2-H} |G_t| \mathrm{d}t \,.
    \end{align*}
    Then, $I_{T_0} \in L_E(\Omega)$ and the Luxemburg norm satisfies
    \begin{align*}
        \norm{I_{T_0}}_E \le \klr{\frac{8}{3}}^{1/2} \int_0^{T_0} t^{1/2-H} \kappa_t \mathrm{d}t\,.
    \end{align*}
\end{lemma}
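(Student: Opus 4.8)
The plan is to regard $I_{T_0}$ as a continuous superposition of the random variables $t^{1/2-H}|G_t|$, $t\in[0,T_0)$, each of which already lies in $L_E(\Omega)$ by Lemma \ref{lemma:gt-orlicz}, and to transport the triangle inequality (ii) for the Luxemburg norm through the integration in $t$. The homogeneity property (i) combined with Lemma \ref{lemma:gt-orlicz} gives $\norm{t^{1/2-H}|G_t|}_E=(8/3)^{1/2}\,t^{1/2-H}\kappa_t$, and by Lemma \ref{lemma:integral-sigmat-existence} the function $t\mapsto t^{1/2-H}\kappa_t$ is integrable on $[0,T_0]$. Thus the whole statement reduces to the integral (Minkowski-type) form of the triangle inequality,
\[
  \norm{\int_0^{T_0}t^{1/2-H}|G_t|\,\mathrm{d}t}_E\;\le\;\int_0^{T_0}\norm{t^{1/2-H}|G_t|}_E\,\mathrm{d}t\;=\;\klr{\tfrac{8}{3}}^{1/2}\int_0^{T_0}t^{1/2-H}\kappa_t\,\mathrm{d}t,
\]
whose right-hand side is finite; this simultaneously yields $I_{T_0}\in L_E(\Omega)$ and the asserted norm bound.

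To establish the displayed inequality I would argue by approximation. Fix $\delta\in(0,T_0/2)$ and work on $[\delta,T_0-\delta]$, where $t\mapsto G_t$ admits a continuous version (it is a Wiener integral with a kernel that is square-integrable there, thanks to the bound $\kappa_t^2\le f^2(T_0,t)\big/\!\int_t^{T_0}f^2(T_0,v)\,\mathrm{d}v$ from the proof of Lemma \ref{lemma:integral-sigmat-existence}). For a refining sequence of partitions of $[\delta,T_0-\delta]$, the Riemann sums $S_n\coloneqq\sum_k t_k^{1/2-H}|G_{t_k}|(t_k-t_{k-1})$ satisfy, by (i) and (ii), $\norm{S_n}_E\le(8/3)^{1/2}\sum_k t_k^{1/2-H}\kappa_{t_k}(t_k-t_{k-1})$, whose limit is at most $M\coloneqq(8/3)^{1/2}\int_0^{T_0}t^{1/2-H}\kappa_t\,\mathrm{d}t$, while $S_n\to\int_\delta^{T_0-\delta}t^{1/2-H}|G_t|\,\mathrm{d}t$ almost surely. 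A Fatou argument for the Orlicz norm then closes the step: for any $r>M$ one has $\mathbb{E}[\exp((S_n/r)^2)]\le2$ for every $n$, so $\mathbb{E}[\exp((\lim_n S_n/r)^2)]\le\liminf_n\mathbb{E}[\exp((S_n/r)^2)]\le2$, hence $\norm{\int_\delta^{T_0-\delta}t^{1/2-H}|G_t|\,\mathrm{d}t}_E\le M$. Letting $\delta\downarrow0$, the truncated integrals increase to $I_{T_0}$ (the integrand being nonnegative), and monotone convergence promotes the bound to $I_{T_0}$ itself, giving $\norm{I_{T_0}}_E\le M$.

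The only genuine difficulty here is technical rather than conceptual: one must secure joint measurability and pathwise continuity of $(t,\omega)\mapsto G_t(\omega)$ away from $t=T_0$, and control the possible blow-up of $\kappa_t$ as $t\uparrow T_0$ (and as $t\downarrow0$ in the long-range regime $H>1/2$), which is exactly what the truncation-and-limit device above takes care of, legitimate precisely because of the integrability furnished by Lemma \ref{lemma:integral-sigmat-existence}. As an alternative that avoids Riemann sums, one may set $C_{T_0}\coloneqq\int_0^{T_0}t^{1/2-H}\kappa_t\,\mathrm{d}t$ and introduce the probability measure $\mathrm{d}\nu(t)=C_{T_0}^{-1}t^{1/2-H}\kappa_t\,\mathrm{d}t$ on $[0,T_0]$; two applications of Jensen's inequality (first to $x\mapsto x^2$, then to $\exp$) give, for $r^2=\tfrac{8}{3}C_{T_0}^2$, the pointwise estimate $\exp(I_{T_0}^2/r^2)\le\int_0^{T_0}\exp\big(\tfrac{3}{8}|G_t|^2/\kappa_t^2\big)\,\mathrm{d}\nu(t)$, and taking expectations together with Fubini reduces the claim to the same $\chi^2$-type Gaussian moment computation already carried out in the proof of Lemma \ref{lemma:gt-orlicz}.
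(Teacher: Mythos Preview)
Your proposal is correct and follows essentially the same route as the paper: Riemann-sum approximation combined with the triangle inequality for the Luxemburg norm, followed by a Fatou argument to pass to the limit. The only differences are cosmetic---you truncate on both sides $[\delta,T_0-\delta]$ whereas the paper truncates only at the left endpoint $[\varepsilon,T_0]$ (your version is arguably cleaner since $\kappa_t$ may blow up as $t\uparrow T_0$), and your alternative Jensen-based argument via the probability measure $\mathrm{d}\nu$ is a genuinely different and somewhat slicker proof not present in the paper.
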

\begin{proof}
    Let $\varepsilon > 0$ be arbitrary small. Set $\varepsilon = t_0^{(n)} < t_1^{(n)} < t_2^{(n)} < \hdots < t_n^{(n)} = {T_0}$ such that for any $i = 1,\hdots,n$
    \begin{align*}
        t_i^{(n)} - t_{i-1}^{(n)} = \frac{{T_0-\varepsilon}}{n} \,.
    \end{align*}
    Since $|G_t| \in L_E(\Omega)$ and $L_E(\Omega)$ is a Banach space, we also know that for all $n \in \mathbb{N}$ the sum 
    \begin{align*}
        \xi_n^\varepsilon \coloneqq \sum_{i=1}^n t_i^{1/2-H} |G_{t_i}| (t_i^{(n)} - t_{i-1}^{(n)})
    \end{align*}
    is in $L_E(\Omega)$, whereby its norm satisfies according to Lemma \ref{lemma:gt-orlicz}
    \begin{align}
        \norm{\xi_n^\varepsilon}_E \le \klr{\frac{8}{3}}^{1/2} \int_\varepsilon^{T_0} t^{1/2-H} \kappa_t \mathrm{d}t \eqqcolon R^\varepsilon
        \label{eq:norm-sn-estimation}
    \end{align}
    for all $n \in \nat$. Note that it has been shown in Lemma \ref{lemma:integral-sigmat-existence} that the integral in equation \eqref{eq:norm-sn-estimation} is finite. Let $R_n^\varepsilon \coloneqq \norm{\xi_n^\varepsilon}_E$. Since $R_n^\varepsilon \le R$ for all $n \in \nat$, we have due to the monotonicity of the exponential function and by definition of the Luxemburg norm
    \begin{align*}
        \mathbb{E}\kle{\exp\klr{\frac{(\xi_n^\varepsilon)^2}{R^2}}} \le 2 \,.
    \end{align*}
    Now we want to show that the limit
    \begin{align*}
        \lim_{\varepsilon \to 0}\lim_{n \to \infty} \xi_n^\varepsilon = \lim_{\varepsilon \to 0}\lim_{n \to \infty} \sum_{i=1}^n t_i^{1/2-H} |G_{t_i}|(t_i^{(n)} - t_{i-1}^{(n)}) = \int_0^{T_0} t^{1/2-H} |G_t| \mathrm{d}t = I_{T_0}
    \end{align*}
    is also an element of the Orlicz space $L_E(\Omega)$. To show this, we choose $R > 0$ as defined in equation~\eqref{eq:norm-sn-estimation} and get by using the continuity of the exponential function and Fatou's lemma
    \begin{align*}
        \mathbb{E}\kle{\exp\klr{\frac{I_{T_0}^2}{R^2}}} &\le \liminf_{\varepsilon \to 0} \liminf_{n \to \infty} \mathbb{E}\kle{\exp\klr{\frac{(\xi_n^\varepsilon)^2}{R^2}}} \le 2 < \infty\,.
    \end{align*}
    Therefore, $I_{T_0} \in L_E(\Omega)$ with $\norm{I_{T_0}}_E \le R$.
\end{proof}
\subsection{Proof of main result}\label{sec:main-result-proof}
%
%
\begin{proof}[Proof of Theorem~\ref{thm:main-thm}]
    In the following, $M_1, M_2, \hdots$ denote some positive constants that are independent of $x$. Without loss of generality, we set $x_0 = 0$. According to Theorem \ref{thm:existence-density} we know that there exists a density $p_{T_0}(x)$ of $X_{T_0}$ with respect to the Lebesgue measure on $\mathbb{R}^d$. Furthermore, in Proposition \ref{prop:gaussian-like-formula} we have shown that the density admits the expression
    \begin{align*}
        p_{T_0}(x) = \frac{1}{\klr{(2\pi)^d \det(\Sigma)}^{1/2}} \exp\klr{-\frac{1}{2}x^T\Sigma^{-1}x} \Psi_{T_0}(x)\,,
    \end{align*}
    where $\Psi_{T_0}(x)$ is given by
    \begin{align*}
        \Psi_{T_0}(x) = \mathbb{E}_{\mathbb{P}}\kle{\exp\klr{\int_0^{T_0} \kla{\psi_t, \mathrm{d}W_t} - \frac{1}{2} \int_0^{T_0} |\psi_t|^2 \mathrm{d}t} | A(a_1 B_{T_0}^{H_1} + a_2 B_{T_0}^{H_2}) = x }\,,
    \end{align*}
    and $\Sigma = A\sigma^2A^T$ with $\sigma^2$ defined in equation \eqref{eq:sigma-definition}. In Lemma \ref{lemma:sde-yt} we have shown that
    \begin{align*}
        (Y_t^{A^{-1}x})_{t \in [0,{T_0}]} \overset{d}{=} (W_t)_{t \in [0,{T_0}]} | A(a_1 B_{T_0}^{H_1} + a_2 B_{T_0}^{H_2}) = x\,,
    \end{align*}
    where $(Y_t^{A^{-1}x})_{t \in [0,{T_0}]}$ admits the expression shown in Corollary \ref{cor:expression-yt}. Therefore, we can write $\Psi_{T_0}(x)$ as
    \begin{align*}
        \Psi_{T_0}(x) &= \mathbb{E}\kle{\exp\klr{\int_0^{T_0} \kla{\psi_t, \mathrm{d}Y_t^{A^{-1}x}} - \frac{1}{2} \int_0^{T_0} |\psi_t|^2 \mathrm{d}t}} \\
        &= \mathbb{E}\kle{\exp\klr{\int_0^{T_0} \kla{\psi_t, \mathrm{d}W_t} - \frac{1}{2} \int_0^{T_0} |\psi_t|^2 \mathrm{d}t + \int_0^{T_0} \kla{\psi_t,\klr{\frac{f({T_0},t)}{\int_0^{T_0} f^2({T_0},v) \mathrm{d}v}A^{-1}x - G_t}}\mathrm{d}t}} \,,
    \end{align*}
    where the process $(G_t)_{t \in [0,{T_0}]}$ is defined in Lemma \ref{lemma:gt-orlicz}. \\\\
    \textit{Step 1}: Our goal is to show that $\Psi_{T_0}(x)$ admits an upper bound. Due to Cauchy-Schwarz we have
    \begin{align*}
        \Psi_{T_0}(x) &\le \mathbb{E}^{1/2}\kle{\exp\klr{\int_0^{T_0} 2\kla{\psi_t, \mathrm{d}W_t} -  \int_0^{T_0} |\psi_t|^2 \mathrm{d}t}} \\
        &\quad \cdot \mathbb{E}^{1/2}\kle{\exp\klr{\int_0^{T_0} 2\kla{\psi_t, \klr{\frac{f({T_0},t)}{\int_0^{T_0} f^2({T_0},v) \mathrm{d}v} A^{-1}x - G_t}}\mathrm{d}t}} \,.
    \end{align*}
    Since $(\int_0^{T_0} \kla{\psi_t, \mathrm{d}W_t})_{{T_0} \ge 0}$ is a local martingale, it is well known that its Doléans-Dade exponential satisfies~\cite[Section 4]{li2023non}
    \begin{align*}
        \mathbb{E}\kle{\exp\klr{\int_0^{T_0} \kla{\psi_t, \mathrm{d}W_t}} - \frac{1}{2} \int_0^{T_0} |\psi_t|^2 \mathrm{d}t} \le M_1 {\mathbb{E}^{1/2}\kle{\int_0^{T_0} |\psi_t|^2 \mathrm{d}t}} \,.
    \end{align*}
    In Proposition \ref{prop:helper-weak-existence} it has been shown that $|\psi_t|$ is bounded. Using this proposition we can estimate
    \begin{align*}
        \Psi_{T_0}(x) &\le M_2 \mathbb{E}^{1/2}\kle{\exp\klr{\int_0^{T_0} 2\kla{\psi_t, \klr{\frac{f({T_0},t)}{\int_0^{T_0} f^2({T_0},v) \mathrm{d}v}A^{-1}x - G_t}} \mathrm{d}t}} \\
        &\le M_2 \mathbb{E}^{1/2}\kle{\exp\klr{M_2 \int_0^{T_0}t^{1/2-H}\frac{|f({T_0},t)|}{\int_0^{T_0} f^2({T_0},v) \mathrm{d}v}|A^{-1}x|\mathrm{d}t} \exp\klr{M_2 \int_0^{T_0} t^{1/2-H} |G_t| \mathrm{d}t}} \,.
    \end{align*}
    The first factor in the expectation can be estimated as in Lemma \ref{lemma:integral-sigmat-existence}, in particular, we obtain using the bounds on the covariance kernel that
    \begin{align*}
        &\int_0^{T_0} t^{1/2-H} \frac{|f({T_0},t)|}{\int_0^{T_0} f^2({T_0},v) \mathrm{d}v} \mathrm{d}t \\
        &\quad \le C(H_1,H_2, T_0) \klgcases{
            |a|/|a'|^2\, {T_0}^{2 - 2H - 2H'} + 1/|a'|\, {T_0}^{2 - H - 3H'} & \text{for } H,H' \in (0,1/2) \\
            1/|a|\, {T_0}^{1 - 2H} + |a'|/|a|^2\, {T_0}^{3/2 - 3H} & \text{for } H \in (0,1/2], H' \in [1/2,1) \\
            (|a|/|a'|^2 + 1/|a'|) {T_0}^{3/2 - H - 2H'} & \text{for } H,H' \in (1/2,1)
        }
    \end{align*}
    with $H = \min(H_1,H_2)$, $H' = \max(H_1,H_2)$ and $a,a'$ defined as in Lemma \ref{lemma:integral-sigmat-existence}. In total we get
    \begin{align*}
        \Psi_{T_0}(x) \le M_2  \exp\klr{M_3(T_0) |A^{-1}x|} \mathbb{E}^{1/2}\kle{\exp\klr{M_2 \int_0^{T_0} t^{\frac{1}{2}-H}|G_t| \mathrm{d}t}} \,.
    \end{align*}
    According to Lemma \ref{lemma:integralt-orlicz} the exponent is in the Orlicz space $L_E(\Omega)$, i.e., we can apply Lemma \ref{lemma:orlicz-exp-bound} to conclude that
    \begin{align*}
        \Psi_{T_0}(x) \le M_2 \exp\klr{M_3(T_0) |A^{-1}x| + M_4(T_0)}\,,
    \end{align*}
    where $M_3(T_0),M_4(T_0) > 0$ depend on $T_0$ for $H > 1/2$ only. Hence the density admits the upper bound
    \begin{align*}
        p_{T_0}(x) &\le \frac{M_2}{(2\pi \sigma^2(T_0))^{d/2}\det(A)} \exp\klr{-\frac{1}{2}x^T\Sigma^{-1}(T_0)x + M_3(T_0) |A^{-1}x| + M_4(T_0)} \\
        & \leq \frac{M_2}{(2\pi \sigma^2(T_0))^{d/2}\det(A)} \exp\klr{-\frac{|A^{-1}x|^2}{2\sigma^2(T_0)} + M_3(T_0) |A^{-1}x| + M_4(T_0)}.
    \end{align*}
    Now if $|x| \leq C$ for some given positive constant $C$ we can estimate
    $$p_{T_0}(x) \leq \frac{M_5 (T_0)}{(2\pi \sigma^2(T_0))^{d/2}} \exp\klr{-M_6 \frac{|x|^2}{2\sigma^2(T_0)} },$$
    where $M_5(T_0)>0$ only depends on $T_0$ for $H > 1/2$. Otherwise, if $|x| > C$ we estimate
    \begin{align*}
        p_{T_0}(x) &\le  \frac{M_5(T_0)}{(2\pi \sigma^2(T_0))^{d/2}\det(A)} \exp\klr{-\frac{|A^{-1}x|^2}{2\sigma^2(T_0)} + M_3(T_0) |A^{-1}x| }\\
        &\le \frac{M_5(T_0)}{(2\pi \sigma^2(T_0))^{d/2}\det(A)} \exp\klr{-M_6 \frac{|x|^2}{2\sigma^2(T_0)} \klr{ 1- \frac{M_3(T_0) \sigma^2(T_0)}{|x|} } } \\
        &\leq \frac{M_5(T_0)}{(2\pi \sigma^2(T_0))^{d/2}\det(A)} \exp\klr{-M_7 \frac{|x|^2}{2\sigma^2(T_0)}  } 
    \end{align*}
    using also the fact that $M_3(T_0) \sigma^2(T_0) \leq C$ for some constant independent of $T_0$. Hence, in total we have
    \begin{align*}
        p_{T_0}(x) \le \frac{C_1}{\sigma^d(T_0)} \exp\klr{-\frac{C_2|x|^2}{\sigma^2(T_0)}} \,.
    \end{align*}
    \\\\
    \textit{Step 2}: Now we want to show that $\Psi_{T_0}(x)$ admits a lower bound. Due to Jensen's inequality, we have
    \begin{align*}
        \Psi_{T_0}(x) \!\ge\! M_7 \exp\klr{\mathbb{E}\kle{-\kls{\int_0^{T_0} \!\!\kla{\psi_t, \mathrm{d}W_t}\! - \!\frac{1}{2} \int_0^{T_0} \!\!|\psi_t|^2 \mathrm{d}t\! + \!\int_0^{T_0} \!\!\kla{\psi_t, \klr{\frac{f({T_0},t)}{\int_0^{T_0} f^2({T_0},v) \mathrm{d}v}A^{-1}x - G_t}}\mathrm{d}t}}} \,.
    \end{align*}
    Now we apply the triangle inequality and proceed as in the first step to estimate the remaining term. We end up with
    \begin{align*}
        p_{T_0}(x) &\ge 
        \frac{C_1'}{\sigma^d(T_0)} \exp\klr{-\frac{C_2'|x|^2}{\sigma^2({T_0})}} \,,
    \end{align*}
    where, again, $C_1',C_2'$ denote some $a_1,a_2,H_1,H_2,x_0$ and ${T_0}$-dependent constants. 
\end{proof}
\begin{remark}
    Denote by $K = K(H_1,H_2,A,a_1,a_2) > 0$ some positive constant that depends on the Hurst indices and the diffusion constants. Then, the previous proof dictates the $T_0$ dependence of the constants $C_1,C_1',C_2,C_2'$ to be
    \begin{align*}
        C_1,C_1' \propto  \klgcases{
            1 & \text{for } H \le 1/2\\
            \exp\klr{K ({T_0}^{3-2H-2H'} + {T_0}^{3/2-H-2H'})} & \text{for } H > 1/2 
        } \,, \qquad
        C_2,C_2' \propto 1 \,.
    \end{align*}
    Therefore, for $H \le 1/2$ we see that our results are optimal.
\end{remark}
%
    %
    %
%
%
%
    
%
%
    
%
    
%


\addcontentsline{toc}{section}{References}

\end{document}